\DeclareMathOperator{\argmin}{arg\,min}
\newtheorem{example}{Example}
\newtheorem{assumption}{Assumption}
\newtheorem{remark}{Remark}
\newtheorem{corollary}{Corollary}
\newtheorem{problem}{Problem}
\newtheorem{lemma}{Lemma}
\newtheorem{definition}{Definition}
\newtheorem{proposition}{Proposition}
\newenvironment{proof}{\begin{pf}}{\qed\end{pf}}
\begin{document}

\begin{frontmatter}

\title{Primal-Dual Distributed Temporal Difference Learning \thanksref{footnoteinfo}}

\thanks[footnoteinfo]{This paper was not presented at any IFAC meeting.}

\author[KAIST]{Donghwan Lee}\ead{donghwan@kaist.ac.kr},
\author[Purdue]{Jianghai Hu}\ead{jianghai@purdue.edu},

\address[KAIST]{Department of Electrical Engineering, KAIST, Daejeon, 34141, South Korea}
\address[Purdue]{Department of Electrical and Computer Engineering,
Purdue University, West Lafayette, IN 47906, USA}

\begin{keyword}                           
Reinforcement learning; Markov decision process; machine learning; sequential decision problem; temporal difference learning; multi-agent systems; distributed optimization; saddle-point method; optimal control. 
\end{keyword}                             

\begin{abstract}                          
The goal of this paper is to study a distributed temporal-difference (TD)-learning algorithm for a class of multi-agent Markov decision processes (MDPs). The single-agent TD-learning is a reinforcement learning (RL)
algorithm to evaluate an accumulated rewards corresponding to a given policy. In multi-agent settings, multiple RL agents concurrently behave following its own local behavior policy and learn the accumulated global rewards, which is a sum of the local rewards. The goal of each agent is to evaluate the accumulated global rewards by only receiving its local rewards. The algorithm shares learning parameters through random network communications, which have a randomly changing undirected graph structures. The problem is converted
into a distributed optimization problem and the corresponding saddle-point problem of its Lagrangian function. The propose TD-learning is a stochastic primal-dual algorithm to solve it. We prove finite-time convergence of the algorithm with its convergence rates and sample complexity.
\end{abstract}

\end{frontmatter}

\section{Introduction}

We develop a new multi-agent
temporal-difference (TD)-learning
algorithm, called a distributed gradient temporal-difference (DGTD) learning, for multi-agent Markov decision processes (MDPs). TD-learning~\cite{sutton2009convergent,sutton2009fast} is a reinforcement learning (RL) algorithm to learn an accumulated discounted rewards for a given policy without the model knowledge, which is called the policy evaluation problem. In our multi-agent RL setting, $N$ RL agents concurrently behave and learn the accumulated global rewards, which is a sum of the local rewards, where each agent $i$ only receives local reward following its own local behavior policy $\pi_i$. The main challenge is the information limitation: each agent is only accessible to its local reward which only contains partial information on the global reward. The algorithm assumes additional partial information sharing among agents, e.g., sharing of learning parameters, through random network communications, where the network structure is represented by a randomly changing undirected graph. Despite the additional communication model, the algorithm is still distributed in the sense that each agent has a local view of the overall system: it is only accessible to the learning parameter of the neighboring RL agents in the graph. Potential applications are distributed machine learning, distributed resource allocation, and robotics, where the reward information is limited due to physical limitations (spacial limits in robotics or infrastructure limits in resource allocation) or privacy constraints.

The proposed DGTD generalizes the single-agent GTD~\cite{sutton2009convergent,sutton2009fast} to the multi-agent MDPs. The algorithm is derived according to the following steps: we cast the multi-agent policy evaluation problem as the distributed optimization problem
\begin{align}
&\min_{w^{(i)}} \sum_{i=1}^N {f_i(w^{(i)})}\quad {\rm subject\,\,to}\quad
w^{(1)}=w^{(2)}=\cdots=w^{(N)},\label{eq:distributed-optimization}
\end{align}
where $f_i$ is an objective of each agent, related to the Bellman loss function, and a corresponding single saddle-point optimization problem. The averaging consensus-based algorithms~\cite{jadbabaie2003coordination} are popular for solving the distributed optimization~\eqref{eq:distributed-optimization}. Different from the averaging consensus-based algorithms, the proposed DGTD applies the primal-dual saddle-point approach~\cite{wang2010control,wang2011control,gharesifard2014distributed,shi2015extra,mokhtari2016dsa,lei2016primal} to multi-agent RLs, where primal-dual algorithms are developed for the distributed optimization. Their main idea is to convert the constraints $w^{(1)}=w^{(2)}=\cdots=w^{(N)}$ in~\eqref{eq:distributed-optimization} into a single equality constraint with the graph Laplacian matrix, and solve the optimization by using the Lagrangian duality. It is known that they provide effective convergence rates with constant step-sizes for deterministic problems. We generalize it to stochastic cases using the stochastic primal-dual method~\cite{nemirovski2009robust}, and apply to the policy evaluation problem. Advantages of the primal-dual approach is that analysis tools from optimization perspectives, such as~\cite{chen2016stochastic,nedic2009subgradient,nemirovski2009robust,wang2010control,wang2011control,gharesifard2014distributed} can be easily applied to prove its convergence, and the case of random communication networks can be easily addressed.

The main contributions are summarized as follows:
\begin{enumerate}
\item To the author's knowledge, the proposed DGTD is the first multi-agent off-policy \footnote{The term ``off-policy'' means a property of RL algorithms, especially for the policy evaluation problem, that the behavior policy of the RL agent can be separated with the target policy we want to learn.} RL algorithm which guarantees convergence under distributed rewards. Only recently,~\cite{wai2018multi} and~\cite{ding2019fast} suggest multi-agent off-policy RLs at or after the time of initial submission of this paper. The differences are summarized shortly later.

\item This study provides a general and unified saddle-point framework of the distributed policy evaluation problem, which offers more algorithmic flexibility such as additional cost constraints and objective, for example, entropic measures and sparsity promoting objectives. In particular, we formalize the distributed policy evaluation problem as a distributed optimization, and then convert it into a single saddle-point problem. Another advantage of this approach is that it easily addresses the case of random communication networks.

\item  Rigorous analysis is given for the policy evaluation problem and the DGTD. In particular, we provide analysis of solutions of the proposed saddle-point problem including bounds on the solutions, and prove that the policy evaluation problem can be solved by addressing the saddle-point problem. We also provide rigorous convergence rates and sample complexity of the proposed algorithm, which are currently laking in the literature.
\end{enumerate}

{\em Related works:} Recently, some progresses have been made in multi-agent RLs~\cite{kar2013cal,zhang2018fully,macua2015distributed,stankovic2016multi,mathkar2017distributed}. For the policy optimization problem, the distributed Q-learning (QD-learning)~\cite{kar2013cal}, distributed actor-critic algorithm~\cite{zhang2018fully,suttle2019multi}, and distributed fitted Q-learning~\cite{zhang2018finite} are studied in multi-agent settings. The work in~\cite{qu2019value} considers an approximation distributed Q-learning with neural nonlinear function approximation. For the policy evaluation problem, distributed GTD algorithms are studied in~\cite{macua2015distributed,stankovic2016multi,doan2019convergence,wai2018multi,cassano2019distributed,ding2019fast,cassano2019distributed}. The results in~\cite{macua2015distributed,stankovic2016multi,cassano2019distributed} consider central rewards with different assumptions. The result in~\cite{doan2019convergence} suggests a distributed TD learning with an averaging consensus steps, and proves its convergence rate. The main difference is that~\cite{doan2019convergence} considers an on-policy learning, while this work considers off-policy learning methods. The TD learning in~\cite{wai2018multi} considers a stochastic primal-dual algorithm for the policy evaluation with stochastic variants of the consensus-based distributed subgradient method akin to~\cite{qu2017harnessing}. The main difference is that the algorithm in~\cite{wai2018multi} introduces gradient surrogates of the objective function with respect to the local primal and dual variables, and the mixing steps for consensus are applied to both the local parameters and local gradient surrogates. However, rigorous convergence analysis, such as the sample complexity and convergence with high probability, is lacking in~\cite{wai2018multi} compared to the work in this paper. The work in~\cite{ding2019fast} develops the so-called homotopy stochastic primal-dual algorithm with ${\mathcal O}(1/T)$ rate for strongly convex strongly concave min-max problems, where $T$ is the total number of iterations. The rate is faster than the rate of the proposed algorithm, ${\mathcal O}(1/\sqrt{T})$. However, the new algorithm can be applied to the proposed formulation and improve our result. Moreover, rigorous analysis of solutions is lacking in~\cite{ding2019fast}.

Preliminary results are included in the conference version~\cite{lee2018primal}, which only provides asymptotic convergence based on the stochastic approximation method~\cite{kushner2003stochastic} and control theory. However, the convergence without its rates and complexity analysis does not guarantee efficiency of the algorithm, which is essential in contemporary optimization and learning algorithms. The convergence rate analysis is usually more challenging and requires substantially more works. In this paper, we provide more rigorous and comprehensive analysis of solutions and finite-time convergence rate analysis with sample complexities based on results in convex optimization, which is not possible in the control theoretic approach in~\cite{lee2018primal}. Besides, we consider stochastic network communications and a modified algorithm to improve its convergence properties.

\section{Preliminaries}
\subsection{Notation and terminology}
The following notation is adopted: ${\mathbb R}^n $ denotes the $n$-dimensional Euclidean space; ${\mathbb R}^{n \times m}$ denotes the set
of all $n \times m$ real matrices; ${\mathbb R}_+ $ and ${\mathbb
R}_{++}$ denote the sets of nonnegative and positive real numbers,
respectively, $A^T$ denotes the transpose of matrix $A$; $I_n$ denotes the $n \times n$
identity matrix; $I$ denotes the identity matrix with  appropriate
dimension; $\|\cdot \|_2$ denotes the standard Euclidean norm; $\|x\|_D:=\sqrt{x^T Dx}$ for any
positive-definite $D$; $\lambda_{\min}(A)$ denotes the minimum eigenvalue of
$A$ for any symmetric
matrix $A$; $|{\mathcal S}|$ denotes
the cardinality of the set for any finite set ${\mathcal S}$; ${\mathbb E}[\cdot]$ denotes the expectation
operator; ${\mathbb P}[\cdot]$ denotes the probability of an event; $[x]_i$ is the $i$-th element for any
vector $x$; $[P]_{ij}$ indicates the element in $i$-th row and $j$-th column for any matrix $P$;
if ${\bf z}$ is a discrete random variable which has $n$ values
and $\mu \in {\mathbb R}^n$ is a stochastic vector, then ${\bf z}
\sim \mu$ stands for ${\mathbb P}[{\bf z} = i] = [\mu]_i$ for all
$i \in \{1,\ldots,n \}$; ${\bf 1}_n \in {\mathbb R}^n$ denotes an
$n$-dimensional vector with all entries equal to one; ${\rm
dist}({\mathcal S},x)$ denotes the standard Euclidean distance of a vector $x$
from a set ${\mathcal S}$, i.e., ${\rm dist}({\mathcal S},x):=\inf_{y\in
{\mathcal S}} \|x-y\|_2$; for any ${\mathcal S} \subset {\mathbb R}^n$,
${\rm diam}({\mathcal S}):=\sup_{x \in {\mathcal S},y \in {\mathcal
S}}\|x-y\|_2 $ is the diameter of the set $\mathcal S$; for a convex
closed set $\mathcal S$, $\Gamma_{\mathcal S}(x)$ is the projection of $x$
onto the set $\mathcal S$, i.e., $\Gamma_{\mathcal S}(x):=\argmin_{y\in
{\mathcal S}} \|x-y\|_2$; a continuously differentiable function $f:{\mathbb R}^n \to {\mathbb R}$ is convex if $f(y)\ge f(x)+(y-x)^T \nabla f(x),\forall x,y \in {\mathbb R}^n$ and $\rho$-strongly convex if $f(y)\ge f(x)+(y-x)^T \nabla f(x)+(\rho/2) \|x-y\|^2,\forall x,y \in {\mathbb R}^n$~\cite[pp.~691]{bertsekas1999nonlinear}; $f_x(\bar x)$ is a subgradient of a convex
function $f:{\mathbb R}^n \to {\mathbb R}$ at a given vector $\bar
x \in {\mathbb R}^n$ when the following relation holds: $f(\bar x)
+ f_x (\bar x)^T (x-\bar x)\le f(x)$for all $x \in {\mathbb
R}^n$~\cite[pp.~209]{nedic2009subgradient}.

\subsection{Graph theory}
An undirected graph with the node set ${\mathcal V}$ and the edge set
${\mathcal E}\subseteq {\mathcal V}\times {\mathcal V}$ is denoted by ${\mathcal
G}=({\mathcal E},{\mathcal V})$. We define the neighbor set of node $i$ as
${\mathcal N}_i :=\{j\in {\mathcal V}:(i,j)\in {\mathcal E}\}$. The adjacency
matrix of $\mathcal G$ is defined as a matrix $W$ with $[W]_{ij} = 1$,
if and only if $(i,j) \in {\mathcal E}$. If $\mathcal G$ is undirected,
then $W=W^T$. A graph is connected, if there is a path between any
pair of vertices. The graph Laplacian is $L=H - W$, where $H$ is a
diagonal matrix with $[H]_{ii} = |{\mathcal N}_i|$. If the graph is
undirected, then $L$ is symmetric positive semi-definite. It holds
that $L {\bf 1}_{|{\mathcal V}|}=0$. If ${\mathcal G}$ is connected, $0$
is a simple eigenvalue of $L$, i.e., ${\bf 1}_{|{\mathcal V}|}$ is the unique eigenvector corresponding to $0$, and the span of ${\bf 1}_{|{\mathcal V}|}$ is the null space of $L$.

\subsection{Random communication network}
We will consider a random communication network model considered
in~\cite{lobel2011distributed}. In this paper, agents communicate
with neighboring agents and update their estimates at discrete
time instances $k \in \{0,1,\ldots\}$ over random time-varying
network ${\mathcal G}(k): = ({\mathcal E}(k),{\mathcal V}(k)),k \in \{ 1,2,
\ldots \} $. Let ${\mathcal N}_i (k): = \{ j \in {\mathcal V}(k):(i,j)\in
{\mathcal E}(k)\}$ be the neighbor set of agent $i$, $W(k)$ be the
adjacency matrix of ${\mathcal G}(k)$, and $H(k)$ be a diagonal matrix
with $[H(k)]_{ii}= |{\mathcal N}_i (k)|$. Then, the graph Laplacian of
${\mathcal G}(k)$ is $L(k):= H(k)-W(k)$. We assume that ${\mathcal
G}(k)$ is a random graph that is independent and identically
distributed over time $k$. A formal definition of the random
graph is given below.
\begin{assumption}
Let ${\mathcal F}:=(\Omega,{\mathcal B},\mu )$ be a probability space such
that $\Omega$ is the set of all $|{\mathcal V}|\times |{\mathcal V}|$
adjacency matrices, ${\mathcal B}$ is the Borel $\sigma$-algebra on
$\Omega$ and $\mu$ is a probability measure on ${\mathcal B}$. We
assume that for all $k\geq 0$, the matrix $W(k)$ is drawn from
probability space ${\mathcal F}$.
\end{assumption}
Define the expected value of the random matrices $W(k),H(k),L(k)$,
respectively, by
\begin{align*}
&{\bf W}:={\mathbb E}[W(k)],\quad {\bf H}:={\mathbb
E}[H(k)],\\
&{\bf L}:= {\mathbb E}[L(k)]={\bf H}-{\bf W},
\end{align*}
for all $k \ge 0$. An edge set induced by the positive elements of
the matrix ${\bf W}$ is ${\bf E}:=\{(j,i)\in {\mathcal V} \times {\mathcal
V}:[{\bf{W}}]_{ij}> 0\} $. Consider the corresponding graph $({\bf
E},{\mathcal V})$, which we refer to as the {\rm mean connectivity
graph}~\cite{lobel2011distributed}. We consider the following connectivity
assumption for the graph.
\begin{assumption}[Mean connectivity]\label{assumption:connected}
The mean connectivity graph $({\bf E},{\mathcal V})$ is connected.
\end{assumption}
Under~\cref{assumption:connected}, $0$ is a simple eigenvalue of
${\bf L}$~\cite[Lemma~1]{olfati2006flocking}. It implies that ${\bf L} {\bf 1}_{|{\mathcal V}|}=0$ holds, and later this assumption is used for the consensus of learning parameters.

\subsection{Reinforcement learning overview}\label{sec:RL-overview}
We briefly review a basic single-agent RL algorithm
from~\cite{sutton1998reinforcement} with linear function
approximation. A Markov
decision process (MDP) is characterized by a quadruple ${\mathcal M}: =
({\mathcal S},{\mathcal A},P,r,\gamma)$, where ${\mathcal S}$ is a finite
state space (observations in general), $\mathcal A$ is a finite action
space, $P(s,a,s'):={\mathbb P}[s'|s,a]$ represents the (unknown)
state transition probability from state $s$ to $s'$ given action
$a$, $\hat r:{\mathcal S}\times {\mathcal A}\times {\mathcal S}\to
[0,\sigma]$, where $\sigma >0$ is the bounded random reward
function, and $\gamma \in (0,1)$ is the discount factor. If action
$a$ is selected with the current state $s$, then the state
transits to $s'$ with probability $P(s,a,s')$ and incurs a random
reward $\hat r(s,a,s') \in [0,\sigma ]$ with expectation
$r(s,a,s')$. The stochastic policy is a map $\pi:{\mathcal S} \times
{\mathcal A}\to [0,1]$ representing the probability $\pi(s,a)={\mathbb
P}[a|s]$, $P^\pi$ denotes the transition matrix whose $(s,s')$
entry is ${\mathbb P}[s'|s] = \sum_{a \in {\mathcal A}} {P(s,a,s')\pi
(s,a)}$, and $d:{\mathcal S} \to {\mathbb R}$ denotes the stationary
distribution of the state $s\in {\mathcal S}$ under the behavior policy $\beta$. We also define
$r^\pi(s)$ as the expected reward given the policy $\pi$ and the
current state $s$, i.e.
\begin{align*}
r^\pi(s)&:= \sum_{a\in {\mathcal A}} {\sum_{s'\in {\mathcal S}} {\pi
(s,a)P(s,a,s')r(s,a,s')} }.
\end{align*}

The infinite-horizon discounted value function with policy $\pi$
and reward $\hat r$ is
\begin{align*}
&J^\pi(s):={\mathbb E} \left[ \left. \sum_{k = 0}^\infty {\gamma
^k \hat r(s_k,a_k,s_{k+1})} \right|s_0=s \right],
\end{align*}
where ${\mathbb E}$ stands for the expectation
taken with respect to the state-action trajectories following the
state transition $P^\pi$. Given pre-selected basis (or feature)
functions $\phi_1,\ldots,\phi_q:{\mathcal S}\to {\mathbb R}$, $\Phi
\in {\mathbb R}^{|{\mathcal S}| \times q}$ is defined as a full column
rank matrix whose $s$-th row vector is $\phi(s):=\begin{bmatrix} \phi_1(s)
&\cdots & \phi_q(s) \end{bmatrix}$. The goal of RL with the linear
function approximation is to find the weight vector $w$ such that
$J_{w}:=\Phi w$ approximates the true value function $J^{\pi}$.
This is typically done by minimizing the {\em mean-square Bellman
error} loss function~\cite{sutton2009fast}
\begin{align}
&\min_{w \in {\mathbb R}^q} {\rm MSBE}(w):=\frac{1}{2} \|
r^{\pi}+\gamma P^\pi\Phi w-\Phi w
\|_{D}^2,\label{eq:loss-function1}
\end{align}
where $D$ is a symmetric positive-definite matrix and $r^\pi \in {\mathbb R}^{|{\mathcal S}|}$ is a vector enumerating all $r^\pi(s), s\in {\mathcal S}$. For online
learning, we assume that $D$ is a diagonal matrix with positive
diagonal elements $d(s),s\in {\mathcal S}$. In the model-free learning, a stochastic gradient descent method can be applied with a stochastic estimates of the gradient $\nabla_w {\rm MSBE}(w)=(\gamma P^{\pi} \Phi-
\Phi)^TD(r^{\pi}+\gamma P^{\pi}\Phi w-\Phi w)$. The temporal difference
(TD)~learning~\cite{sutton1998reinforcement,bertsekas1996neuro}
with a linear function approximation is a stochastic gradient descent method with stochastic estimates of the approximate gradient
$\nabla_w {\rm MSBE}(w)\cong (-\Phi)^T D(r^{\pi}+\gamma
P^{\pi}\Phi w-\Phi w)$, which is obtained by dropping $\gamma P^{\pi} \Phi$ in $\nabla_w {\rm MSBE}(w)$. If the linear function approximation is used, then this algorithm
converges to an optimal solution of~\eqref{eq:loss-function1}. The GTD
in~\cite{sutton2009fast} solves instead the minimization of the
{\em mean-square projected Bellman error} loss function
\begin{align}
&\min_{w\in {\mathbb R}^q} {\rm MSPBE}(w):= \frac{1}{2}\|
\Pi (r^{\pi} + \gamma P^{\pi} \Phi w)-\Phi w \|_D^2,\label{eq:GDD(0)-loss}
\end{align}
where $\Pi$ is the projection onto the range space of $\Phi$,
denoted by $R(\Phi)$: $\Pi(x):=\argmin_{x'\in R(\Phi)}
\|x-x'\|_D^2$. The projection can be performed by the matrix
multiplication: we write $\Pi(x):=\Pi x$, where $\Pi:=\Phi(\Phi^T
D\Phi)^{-1}\Phi^T D$. Compared to the standard TD~learning, the
main advantage of the GTD algorithms~\cite{sutton2009convergent,sutton2009fast} is their
off-policy learning abilities.
\begin{remark}
Although its direct application to real problems is limited, the policy evaluation problem is a fundamental problem which is a critical building block to develop more practical policy optimization algorithms such as SALSA~\cite{rummery1994line} and actor-critic~\cite{konda2000actor} algorithms.
\end{remark}

Note that $d$ depends on the behavior policy, $\beta$, while $P^{\pi}$ and $r^{\pi}$ depend on the target policy, $\pi$, that we want to evaluate. This corresponds to the off-policy learning. The main problem is to obtain samples, $(s,a,\hat r,s')$ under $\pi$, from the samples under $\beta$. It can be done by the importance sampling or sub-sampling techniques~\cite{sutton2009convergent}. Throughout the paper, we mostly consider the case $\beta = \pi$ (on-policy) for simplicity. However, it can be generalized to the off-policy learning with simple modifications.

\section{Distributed reinforcement learning overview}\label{section:DRL-overview}
 In this section, we introduce the notion of the
distributed RL, which will be studied throughout the paper.
Consider $N$ RL agents labelled by $i \in \{ 1,\ldots,N\}=:{\mathcal
V}$. A multi-agent Markov decision process is characterized by
$({\mathcal S},\{ {\mathcal A}_i\}_{i\in {\mathcal
V}},P,\{\hat r_i\}_{i \in {\mathcal V}},\gamma)$, where $\gamma \in
(0,1)$ is the discount factor, ${\mathcal S}$ is a finite state
space, ${\mathcal A}_i$ is a finite action space of agent
$i$, $a:=(a_1,\ldots,a_N)$ is the joint
action, ${\mathcal A}:=\prod_{i=1}^N {{\mathcal
A}_i}$ is the
corresponding joint action space, $\hat
r_i:{\mathcal S} \times {\mathcal A} \times {\mathcal S} \to [0,\sigma]$,
$\sigma >0$, is a bounded random reward of agent $i$ with
expectation $r_i(s,a,s')$, and $P(s,a,s'):={\mathbb P}[s'|s,a]$
represents the transition model of the state $s$ with the
joint action $a$ and the corresponding joint action space ${\mathcal
A}$. The stochastic policy of agent $i$ is a mapping $\pi_i:{\mathcal
S} \times {\mathcal A}_i\to [0,1]$ representing the probability
$\pi_i(s,a_i)={\mathbb P}[a_i|s]$ and the corresponding joint
policy is $\pi(s,a):=\prod_{i=1}^N {\pi_i(s,a_i)}$. $P^{\pi}$
denotes the transition matrix, whose $(s,s')$ entry is ${\mathbb
P}[s'|s]=\sum_{a \in {\mathcal A}} {P(s,a,s')\pi (s,a)}$, $d:{\mathcal S}
\to {\mathbb R}$ denotes the stationary state distribution under
the policy $\pi$. In particular, if the joint action $a$ is
selected with the current state $s$, then the state transits to
$s'$ with probability $P(s,a,s')$, and each agent $i$ observes a
random reward $\hat r_i(s,a,s') \in [0,\sigma ]$ with expectation
$r_i(s,a,s')$. We assume that each agent does not have access to
other agents' rewards. For instance, there exists no centralized
coordinator; thereby each agent does not know other agents'
rewards. In another example, each agent/coordinator may not want
to uncover his/her own goal or the global goal for
security/privacy reasons. We denote by $r_i^\pi(s)$  the expected
reward of agent $i$, given the current state $s$
\begin{align*}
&r_i^\pi(s):= \sum_{a\in A} {\sum_{s'\in {\mathcal S}} {\pi(s,a)P(s,a,s')r_i(s,a,s')}}.
\end{align*}

Throughout the paper, a vector enumerating all $r_i^\pi(s), s\in {\mathcal S}$ is denoted by $r_i^\pi \in {\mathbb R}^{|{\mathcal S}|}$. In addition, denote by $P_i(s,a,s_i')$ the state transition
probability of agent $i$ given joint state $s$ and joint action
$a$. We can consider one of the following two scenarios throughout
the paper.
\begin{enumerate}

\item All agents can observe the identical state $s$. For
example, transitions of multiple ground robots avoiding collisions
with each other may depend on other robots actions and states, while they needs to know the global state, e.g., locations of all robots.

\item All agents observe different states, while each agent's state transition is
independent of the other agents' states and actions, i.e., they are fully decoupled. For example, each agent observes its own state which is sampled independently from the state transition probability of the MDP. For another instance, multiple robots navigating separated regions do not affect other agents' transitions.
\end{enumerate}

In this paper, we assume that the MDP with given $\pi$
has a stationary distribution.
\begin{assumption}
With a fixed policy $\pi$, the Markov chain $P^{\pi}$ is ergodic
with the stationary distribution $d$ with $d(s)>0, s\in {\mathcal S}$.
\end{assumption}

In addition, we summarize definitions and notations for some
important quantities below.
\begin{enumerate}
\item $D$ is defined as a diagonal matrix with diagonal entries
equal to those of $d$.

\item $J^\pi$ is the infinite-horizon discounted value function
with policy $\pi$ and reward $\hat r =(\hat r_1 +\cdots+\hat r_N
)/N$ defined as $J^\pi$ satisfying $J^\pi=\frac{1}{N}\sum_{i =
1}^N {r_i^{\pi}}+\gamma P^\pi J^\pi$.

\item We denote $\xi :=\min_{s \in {\mathcal S}} d(s)$.
\end{enumerate}

The goal is to learn an approximate value of the centralized
reward $\hat r=(\hat r_1+\cdots+\hat r_N)/N$ as stated below.
\begin{problem}[Multi-agent RL problem (MARLP)]\label{problem:multi-agent-RL}
In the multi-agent RL problem, the goal of each agent $i$ is to learn an approximate value
function of the centralized reward $\hat r=(\hat r_1+\cdots+ \hat r_N)/N$.
\end{problem}

Our first step to develop a decentralized RL algorithm to
solve~\cref{problem:multi-agent-RL} is to convert the problem into
an equivalent optimization problem. In particular, we can prove
that solving~\cref{problem:multi-agent-RL} is equivalent to
solving the optimization problem
\begin{align}
&\min_{w\in C}\sum_{i=1}^N {{\rm
MSPBE}_i(w)},\label{eq:distributed-opt0}
\end{align}
where ${\rm MSPBE}_i$ is defined as ${\rm MSPBE}_i(w):= \frac{1}{2}\|
\Pi (r_i^{\pi}+\gamma P^{\pi} \Phi w)-\Phi w
\|_D^2$ for all $i \in \{1,2,\ldots,N \}$, $C \subset {\mathbb R}^q$ is assumed to be a compact convex
set which includes an unconstrained global minimum
of~\eqref{eq:distributed-opt0}.
\begin{proposition}\label{prop:equivalance}
Solving~\eqref{eq:distributed-opt0} is equivalent to finding the unique solution $w^*$ to the projected Bellman
equation
\begin{align}
&\Pi\left( \frac{1}{N}\sum_{i=1}^N {r_i^{\pi}}+\gamma P^\pi \Phi
w^*\right)=\Phi w^*.\label{eq:projected-Bellman-eq}
\end{align}

Moreover, the solution is given by
\begin{align}
&w^*=(\Phi^T D(I-\gamma P^\pi)\Phi)^{-1}\Phi^T
D\frac{1}{N}\sum_{i=1}^N {r_i^\pi}.\label{eq:solution}
\end{align}
\end{proposition}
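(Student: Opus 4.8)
The plan is to reduce the vector optimization in \eqref{eq:distributed-opt0} to a single linear system, identify that system with the projected Bellman equation \eqref{eq:projected-Bellman-eq}, and read off the closed form \eqref{eq:solution}. Throughout I would use the three defining identities of the $D$-weighted projector: $\Pi^2=\Pi$, $\Pi\Phi=\Phi$, and $\Pi^T D=D\Pi$ (the last because $\Pi=\Phi(\Phi^T D\Phi)^{-1}\Phi^T D$ with $D$ and $\Phi^T D\Phi$ symmetric). First, since $\Pi\Phi w=\Phi w$ and $\Pi^2=\Pi$, setting $y_i(w):=r_i^\pi+(\gamma P^\pi-I)\Phi w$ gives $\Pi(r_i^\pi+\gamma P^\pi\Phi w)-\Phi w=\Pi\,y_i(w)$, and $\Pi^T D=D\Pi$ yields ${\rm MSPBE}_i(w)=\tfrac12\,y_i(w)^T D\Pi\,y_i(w)$, a convex quadratic in $w$ with Hessian $(\Pi(\gamma P^\pi-I)\Phi)^T D\,(\Pi(\gamma P^\pi-I)\Phi)\succeq 0$. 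Hence $\sum_{i=1}^N{\rm MSPBE}_i$ is convex and its unconstrained minimizers coincide with its stationary points.

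Next I would compute the gradient. Each $\Pi y_i(w)$ lies in $R(\Phi)$, so a direct chain-rule computation using $\Pi^T D=D\Pi$ gives $\nabla_w\sum_{i=1}^N{\rm MSPBE}_i(w)=\Phi^T(\gamma P^\pi-I)^T D\Phi\,(\Phi^T D\Phi)^{-1}\Phi^T D\big(\sum_{i=1}^N r_i^\pi+N(\gamma P^\pi-I)\Phi w\big)$. Setting this to zero and cancelling the two square invertible left factors $\Phi^T(\gamma P^\pi-I)^T D\Phi=-(\Phi^T D(I-\gamma P^\pi)\Phi)^T$ and $(\Phi^T D\Phi)^{-1}$ leaves the linear system $\Phi^T D(I-\gamma P^\pi)\Phi\,w=\Phi^T D\,\tfrac1N\sum_{i=1}^N r_i^\pi$.

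It remains to justify well-posedness and to identify this system with \eqref{eq:projected-Bellman-eq}. Invertibility of $\Phi^T D\Phi$ is the full-column-rank assumption on $\Phi$; invertibility of $\Phi^T D(I-\gamma P^\pi)\Phi$ follows from the standard non-expansiveness $\|P^\pi x\|_D\le\|x\|_D$ under the stationary distribution $d$ (valid in the on-policy case treated here), since for $x\neq 0$ we get $x^T\Phi^T D(I-\gamma P^\pi)\Phi x\ge (1-\gamma)\|\Phi x\|_D^2>0$. Thus the system above has the unique solution \eqref{eq:solution}. Finally, using $\Pi\Phi w=\Phi w$, equation \eqref{eq:projected-Bellman-eq} reads $\Pi\big(\tfrac1N\sum_i r_i^\pi+(\gamma P^\pi-I)\Phi w\big)=0$, which — because $\Phi$ has full column rank and $\Phi^T D\Phi$ is invertible — is equivalent to $\Phi^T D\big(\tfrac1N\sum_i r_i^\pi+(\gamma P^\pi-I)\Phi w\big)=0$, i.e. exactly the stationarity condition. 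Hence \eqref{eq:projected-Bellman-eq} has the same unique solution \eqref{eq:solution}; since $C$ is assumed to contain this unconstrained minimizer, it is also the unique minimizer of \eqref{eq:distributed-opt0}, which proves the equivalence (and connects to \cref{problem:multi-agent-RL}, whose target value is precisely the MSPBE/GTD approximation for the averaged reward $\hat r$).

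The only step beyond linear-algebra bookkeeping is the invertibility of $\Phi^T D(I-\gamma P^\pi)\Phi$, which I would obtain from the contraction property of $P^\pi$ in $\|\cdot\|_D$; everything else is manipulation of the projector identities $\Pi^2=\Pi$, $\Pi\Phi=\Phi$, $\Pi^T D=D\Pi$. A secondary point to handle carefully is that $\sum_i{\rm MSPBE}_i(w)$ differs from $N\cdot{\rm MSPBE}(w)$ built from the averaged reward only by a $w$-independent constant (an alternative route to the reduction), but the direct gradient computation above avoids having to establish that identity.
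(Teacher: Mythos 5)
Your proof is correct and follows essentially the same route as the paper: write the first-order optimality condition for the convex objective, cancel the invertible factors $\Phi^T D(I-\gamma P^\pi)\Phi$ and $(\Phi^T D\Phi)^{-1}$ to reduce to the linear system $\Phi^T D(I-\gamma P^\pi)\Phi\,w=\Phi^T D\,\tfrac1N\sum_i r_i^\pi$, and identify it with the projected Bellman equation via the projector identities. The only substantive difference is that you prove the invertibility of $\Phi^T D(I-\gamma P^\pi)\Phi$ directly from the non-expansiveness of $P^\pi$ in $\|\cdot\|_D$, where the paper simply cites Bertsekas--Tsitsiklis; that makes your argument more self-contained but does not change the approach.
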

\begin{proof}
Since~\eqref{eq:distributed-opt0} is convex, $w^*$ is an
unconstrained global solutions, if and only if
\begin{align*}
&\nabla_w \sum_{i=1}^N {{\rm MSPBE}_i(w^*)}=0\\
\Leftrightarrow& -(\Phi^T D(I-\gamma P^{\pi})\Phi)^T (\Phi^T
D\Phi)^{-1}\Phi^T D \\
&\times\sum_{i=1}^N {(r_i^\pi-(I-\gamma P^{\pi})\Phi w^*)}= 0.
\end{align*}
Since $\Phi^T D(I-\gamma P^{\pi})\Phi$ is
nonsingular~\cite[pp.~300]{bertsekas1996neuro}, this implies
$(\Phi^T D\Phi)^{-1}\Phi^T D\sum_{i=1}^N{(r_i^{\pi}-(I-\gamma
P^{\pi})\Phi w^*)}=0$. Pre-multiplying the equation by $\Phi$ yields the projected
Bellman equation~\eqref{eq:projected-Bellman-eq}. A solution $w^*$
of the projected Bellman equation~\eqref{eq:projected-Bellman-eq}
exists~\cite[pp.~355]{bertsekas1996neuro}. To prove the second
statement, pre-multiply~\eqref{eq:projected-Bellman-eq} by $\Phi^T
D$ to have
\begin{align*}
&\Phi ^T D\left(\frac{1}{N}\sum_{i=1}^N {r_i^\pi}+\gamma
P^\pi \Phi w^*\right) = \Phi^T D\Phi w^*,
\end{align*}
where we use $\Pi:=\Phi (\Phi^T D\Phi)^{-1}\Phi^T D$ and $\Phi^T D
\Pi=\Phi^T D \Phi(\Phi^T D\Phi)^{-1}\Phi^T D=\Phi^T D$.
Rearranging terms, we have $\Phi ^T D\frac{1}{N}\sum_{i=1}^N {r_i^\pi}= \Phi^T D(I-\gamma
P^{\pi})\Phi w^*$. Since $\Phi^T D(I-\gamma P^{\pi})\Phi$ is
nonsingular~\cite[pp.~300]{bertsekas1996neuro}, pre-multiply both
sides of the above equation by $(\Phi^T D(I-\gamma
P^{\bar\pi})\Phi )^{-1}$ to obtain~\eqref{eq:solution}. The solution is unique because the objective function in~\eqref{eq:distributed-opt0} is strongly convex.
\end{proof}
\begin{remark}
If $\Phi=I_{|{\mathcal S}|}$, then the results are reduced to those
of the tabular representations. Therefore, all the developments in
this paper include both the tabular representation and the linear
function approximation cases.
\end{remark}

To develop a distributed algorithm, we first
convert~\eqref{eq:distributed-opt0} into the equivalent distributed
optimization problem~\cite{nedic2010constrained}\\
{\bf Distributed optimization form of MARLP:}
\begin{align}
&\min_{w_i\in C} \sum_{i=1}^N {{\rm
MSPBE}_i(w_i)}\label{eq:distributed-opt}\\
&{\rm subject\,\,to}\quad
w_1=w_2=\cdots=w_N,\label{eq:consensus-constraint}
\end{align}
where~\eqref{eq:consensus-constraint} implies the consensus among
$N$ copies of the parameter $w$. To make the problem more
feasible, we assume that the learning parameters $w_i$, $i\in
{\mathcal V}$, are exchanged through a random communication network
represented by the undirected graph ${\mathcal G}(k)=({\mathcal
E}(k),{\mathcal V}(k))$. In the next section, we will make several
conversions of~\eqref{eq:distributed-opt} to arrive at an
optimization form, which can be solved using a primal-dual saddle-point algorithm~\cite{nedic2009subgradient,nemirovski2009robust}.

\section{Stochastic primal-dual algorithm for saddle-point problem}\label{sec:primal-dual-algorithm}

The proposed RL algorithm is based on a saddle-point problem formulation of the distributed optimization problem~\eqref{eq:distributed-opt}. In this section, we briefly introduce the definition of the
saddle-point problem and a stochastic primal-dual algorithm~\cite{nemirovski2009robust} to find its solution.
\begin{definition}[Saddle-point~\cite{nedic2009subgradient}]\label{def:saddle-point}
Consider the map ${\mathcal L}:{\mathcal X} \times {\mathcal W} \to {\mathbb
R}$, where ${\mathcal X}$ and ${\mathcal W}$ are compact convex sets.
Assume that ${\mathcal L}(\cdot,w)$ is convex over ${\mathcal X}$ for all
$w \in {\mathcal W}$ and ${\mathcal L}(x,\cdot)$ is concave over ${\mathcal
W}$ for all $x \in {\mathcal X}$. Then, there exists a pair $(x^*,w^*
)$ that satisfies
\begin{align*}
&{\mathcal L}(x^*,w) \le {\mathcal L}(x^*,w^*)\le {\mathcal L}(x,w^*),\quad \forall (x,w) \in {\mathcal X} \times {\mathcal W}.
\end{align*}
The pair $(x^*,w^*)$ is called a saddle-point of ${\mathcal L}$. The saddle-point problem is defined as the problem of finding saddle
points $(x^*,w^*)$. It can be also defined as solving $\min_{x\in
{\mathcal X}} \max_{w\in {\mathcal W}} {\mathcal L}(x,w) = \max_{w \in {\mathcal
W}}\min_{x \in {\mathcal X}} {\mathcal L}(x,w)$.
\end{definition}

In our analysis, it will use the notion of approximate saddle-points in a geometric manner. In particular, the concept of the $\varepsilon$-saddle set is defined below.
\begin{definition}[$\varepsilon$-saddle set]\label{def:e-saddle-point}
For any $\varepsilon\geq 0$, the $\varepsilon$-saddle set is defined as
\begin{align*}
{\mathcal H}_\varepsilon :=&\{(x^*,w^*)\in {\mathcal X} \times {\mathcal W}:\\
& {\mathcal L}(x^*,w)-{\mathcal L}(x,w^*)\le \varepsilon,\forall x \in {\mathcal X},w \in {\mathcal W}\}.
\end{align*}
\end{definition}

From the definition, it is clear that ${\mathcal H}_0$ is the set of all saddle-points. The goal of the saddle-point problem is to find a saddle-point $(x^*,w^*)$ defined
in~\cref{def:saddle-point} over the set ${\mathcal X} \times {\mathcal
W}$. The stochastic primal-dual saddle-point algorithm in~\cite{nemirovski2009robust} can find a saddle-point when we have access to stochastic gradient estimates of function ${\mathcal L}$. It executes the following updates:
\begin{align}
&x_{k+1}=\Gamma_{\mathcal X} (x_k-\alpha_k ({\mathcal L}_x (x_k,w_k)+
\varepsilon_k)),\label{eq:promai-dual-subgrad1}\\
&w_{k+1}=\Gamma_{\mathcal W} (w_k+\alpha_k ({\mathcal L}_w
(x_k,w_k)+\xi_k)),\label{eq:promai-dual-subgrad2}
\end{align}
where ${\mathcal L}_x (x,w)$ and ${\mathcal L}_w(x,w)$ are the gradients of
${\mathcal L}(x,w)$ with respect to $x$ and $w$, respectively, and
$\varepsilon_k,\xi_k$ are i.i.d. random variables with zero means. To
proceed, define the history of the algorithm until time $k$, ${\mathcal F}_k :=(\varepsilon_0,\ldots ,\varepsilon _{k-1},\xi_0,\ldots,\xi_{k-1},x_0,\ldots,x_k,w_0,\ldots,w_k )$ related to~\cref{algo:DGTD}. In the following result, we provide a finite-time convergence of the primal-dual algorithm with high probabilities.
\begin{proposition}\label{prop:convergence1}
Assume that there exists a constant $C >0 $ such that
\begin{align}
&\| {\mathcal L}_x(x_k,w_k)+\varepsilon_k
\|_2\le C,\label{eq:1}\\
&\|{\mathcal L}_w(x_k,w_k )+\xi_k\|_2 \le C,\label{eq:2}\\
&{\rm diam}({\mathcal X}) \le C,\quad {\rm diam}({\mathcal W})\le C.\label{eq:3}
\end{align}
In addition, we assume that the step-size sequence $(\alpha_k)_{k=0}^\infty$
satisfies $\alpha_k =\alpha_0 /\sqrt{k+1}$. Let $\hat x_T =
\frac{1}{T}\sum_{k=0}^{T-1} {x_k}$ and $\hat w_T =
\frac{1}{T}\sum_{k=0}^{T-1} {w_k}$ be the averaged dual iterates
generated by~\eqref{eq:promai-dual-subgrad1}
and~\eqref{eq:promai-dual-subgrad2} with $T \ge 1$. Then, for any
$\varepsilon> 0,\delta\in (0,1)$, if $T \ge\max \{\Omega_1,\Omega_2\}$, then
\begin{align*}
&{\mathbb P}[(\hat x_T,\hat w_T ) \in H_\varepsilon  ] \ge 1 - \delta,
\end{align*}
where
\begin{align*}
\Omega_1:=&\frac{8C^2((\alpha_0+2)^2 C^2+(\alpha_0+4)\varepsilon/6)}{\varepsilon ^2} \ln\left(\frac{1}{\delta}\right),\\
\Omega_2:=&\frac{4C^4 (2\alpha_0^{-1}+\alpha_0)^2}{\varepsilon^2}.
\end{align*}
\end{proposition}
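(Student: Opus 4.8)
The plan is to follow the classical template for analyzing stochastic primal--dual (projected-subgradient) methods on saddle-point problems, as in \cite{nemirovski2009robust,nedic2009subgradient}, but tracking the geometric notion of \cref{def:e-saddle-point} and upgrading the in-expectation estimate to one holding with high probability. By \cref{def:e-saddle-point}, the event $(\hat x_T,\hat w_T)\in{\mathcal H}_\varepsilon$ is exactly $\sup_{x\in{\mathcal X},\,w\in{\mathcal W}}[{\mathcal L}(\hat x_T,w)-{\mathcal L}(x,\hat w_T)]\le\varepsilon$. Since ${\mathcal L}(\cdot,w)$ is convex and ${\mathcal L}(x,\cdot)$ is concave, Jensen's inequality applied to $\hat x_T=\frac1T\sum_k x_k$ and $\hat w_T=\frac1T\sum_k w_k$ gives, for every fixed $(x,w)$,
\[
{\mathcal L}(\hat x_T,w)-{\mathcal L}(x,\hat w_T)\le\frac1T\sum_{k=0}^{T-1}\bigl[{\mathcal L}(x_k,w)-{\mathcal L}(x,w_k)\bigr],
\]
so it suffices to bound the right-hand side, uniformly over $(x,w)$, by $\varepsilon$ with probability at least $1-\delta$.

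Next I would establish the per-iteration inequality. Writing $g_k^x:={\mathcal L}_x(x_k,w_k)+\varepsilon_k$ and $g_k^w:={\mathcal L}_w(x_k,w_k)+\xi_k$, expanding $\|x_{k+1}-x\|_2^2$ and $\|w_{k+1}-w\|_2^2$ with the nonexpansiveness of $\Gamma_{\mathcal X},\Gamma_{\mathcal W}$ and the updates \eqref{eq:promai-dual-subgrad1}--\eqref{eq:promai-dual-subgrad2}, and then using the (sub)gradient inequalities ${\mathcal L}_x(x_k,w_k)^T(x_k-x)\ge{\mathcal L}(x_k,w_k)-{\mathcal L}(x,w_k)$ and ${\mathcal L}_w(x_k,w_k)^T(w-w_k)\ge{\mathcal L}(x_k,w)-{\mathcal L}(x_k,w_k)$, one obtains
\[
{\mathcal L}(x_k,w)-{\mathcal L}(x,w_k)\le\frac{\|x_k-x\|_2^2-\|x_{k+1}-x\|_2^2+\|w_k-w\|_2^2-\|w_{k+1}-w\|_2^2}{2\alpha_k}+\frac{\alpha_k}{2}\bigl(\|g_k^x\|_2^2+\|g_k^w\|_2^2\bigr)-\varepsilon_k^T(x_k-x)-\xi_k^T(w-w_k).
\]
Summing over $k$, doing an Abel summation on the telescoping distance terms (valid since $\alpha_k$ is nonincreasing), and bounding squared gradients and squared distances by $C^2$ via \eqref{eq:1}--\eqref{eq:3}, together with $\sum_{k=0}^{T-1}\alpha_k\le2\alpha_0\sqrt T$ and $\alpha_{T-1}^{-1}=\alpha_0^{-1}\sqrt T$, produces a deterministic contribution of order $C^2(\alpha_0^{-1}+\alpha_0)/\sqrt T$ — imposing it to be $\le\varepsilon/2$ is the condition $T\ge\Omega_2$, up to the grouping of absolute constants — plus the averaged noise term $\frac1T\sum_k[-\varepsilon_k^T(x_k-x)-\xi_k^T(w-w_k)]$, which still depends on $(x,w)$.

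The main obstacle is controlling $\sup_{x,w}\frac1T\sum_k[-\varepsilon_k^T(x_k-x)-\xi_k^T(w-w_k)]$ in high probability: the maximizing $(x,w)$ depends on the whole noise path, so the summands are not martingale differences and there is no finite set to union-bound over. I would resolve this with the standard ghost-iterate device: introduce auxiliary sequences $u_{k+1}=\Gamma_{\mathcal X}(u_k+\alpha_k\varepsilon_k)$ and $v_{k+1}=\Gamma_{\mathcal W}(v_k-\alpha_k\xi_k)$, started at $u_0=x_0$, $v_0=w_0$, which run online projected steps on the linear losses $u\mapsto-\varepsilon_k^T u$ and $v\mapsto\xi_k^T v$. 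Splitting $-\varepsilon_k^T(x_k-x)=\varepsilon_k^T(u_k-x_k)+\varepsilon_k^T(x-u_k)$ and symmetrically for $w$, the "regret" parts $\sup_x\sum_k\varepsilon_k^T(x-u_k)$ and $\sup_w\sum_k\xi_k^T(w_k-v_k)$ are bounded deterministically (given the noise) by the usual online-gradient regret estimate $C^2/\alpha_{T-1}+\sum_{k}\alpha_k\|\varepsilon_k\|_2^2$, again $O(C^2\alpha_0^{-1}\sqrt T+C^2\alpha_0\sqrt T)$ after using $\|\varepsilon_k\|_2\le2C$ and $\|\xi_k\|_2\le2C$ (which follow from \eqref{eq:1}--\eqref{eq:2} and ${\mathcal L}_x(x_k,w_k)={\mathbb E}[g_k^x\mid{\mathcal F}_k]$, whence $\|{\mathcal L}_x(x_k,w_k)\|_2\le C$). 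The remaining term $\sum_k[\varepsilon_k^T(u_k-x_k)+\xi_k^T(w_k-v_k)]$ is a genuine $({\mathcal F}_k)$-martingale, since $u_k,v_k,x_k,w_k$ are ${\mathcal F}_k$-measurable while $\varepsilon_k,\xi_k$ are zero-mean and independent of ${\mathcal F}_k$; its increments are bounded by $O(C^2)$ (as $\|u_k-x_k\|_2,\|w_k-v_k\|_2\le C$) and its conditional second moments sum to $O(C^4T)$, so a Freedman/Bernstein-type martingale inequality bounds it, with probability at least $1-\delta$, by $O(C^2\sqrt{T\ln(1/\delta)})+O(C^2\ln(1/\delta))$.

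Collecting everything and dividing by $T$, the averaged gap is, with probability at least $1-\delta$, at most
\[
\frac{c_1C^2(\alpha_0^{-1}+\alpha_0)}{\sqrt T}+\frac{c_2C^2\sqrt{\ln(1/\delta)}}{\sqrt T}+\frac{c_3C^2\ln(1/\delta)}{T}
\]
for absolute constants $c_1,c_2,c_3$ (the $\alpha_0$-dependence in $\Omega_1$ enters through the ghost-regret contribution that is absorbed into the $\ln(1/\delta)$-terms). Requiring the first term to be $\le\varepsilon/2$ gives $T\ge\Omega_2$; requiring the last two to be $\le\varepsilon/2$ is a quadratic inequality in $1/\sqrt T$ whose solution has exactly the form $T\ge\Omega_1$, with the $C^4\varepsilon^{-2}\ln(1/\delta)$ and $C^2\varepsilon^{-1}\ln(1/\delta)$ summands appearing inside $\Omega_1$. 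A union bound over the finitely many concentration events used then yields ${\mathbb P}[(\hat x_T,\hat w_T)\in{\mathcal H}_\varepsilon]\ge1-\delta$ whenever $T\ge\max\{\Omega_1,\Omega_2\}$, as claimed. The crux throughout is the uniform-in-$(x,w)$ control of the noise in the high-probability regime; the ghost-iterate reduction to an honest bounded-increment martingale is the key step, while the remaining bookkeeping (Abel summation, matching absolute constants) is routine.
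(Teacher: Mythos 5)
Your proposal is correct in its overall architecture and reaches the same $O(C^4\ln(1/\delta)/\varepsilon^2)$ sample complexity, but it takes a genuinely different route at the one step that actually matters. The paper starts identically (the iterate relations of \cref{lemma:basic-iterate-relations1}, telescoping/Abel summation of the $\frac{1}{2\alpha_k}(\mathcal{E}_k-\mathcal{E}_{k+1})$ terms, the $\frac{C^2}{T}\sum_k\alpha_k$ step-size term giving the $\Omega_2$ condition), but it then lumps all the stochasticity into the squared-distance martingale $\mathcal{M}_T=\sum_k\frac{1}{2\alpha_k}(\mathcal{E}_{k+1}^{(1)}(x)+\mathcal{E}_{k+1}^{(2)}(w)-\mathbb{E}[\cdot\,|\mathcal{F}_k])$ and applies \cref{lemma:Bernstein-inequality} \emph{for a fixed pair} $(x,w)$; the increment bound $b=\alpha_0C^2+4C^2$ and variance bound $a=(\alpha_0+2)^2C^4$ are what produce the exact $\Omega_1$ in the statement. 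You instead isolate the linear noise terms $-\varepsilon_k^T(x_k-x)-\xi_k^T(w-w_k)$ and handle the supremum over $(x,w)$ via ghost iterates, reducing to a bona fide bounded-increment martingale plus a deterministic online-regret term. This is not merely cosmetic: since membership in $\mathcal{H}_\varepsilon$ (\cref{def:e-saddle-point}) requires the gap bound to hold \emph{uniformly} over $(x,w)\in\mathcal{X}\times\mathcal{W}$, and the paper's martingale and its concentration event depend on the fixed $(x,w)$, your decomposition supplies the uniformity that the paper's proof glosses over when it passes from ``fix any $x,w$'' to the conclusion ${\mathbb P}[(\hat x_T,\hat w_T)\in\mathcal{H}_\varepsilon]\ge1-\delta$. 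The price you pay is that your constants will not literally reproduce the stated $\Omega_1$ and $\Omega_2$ (your martingale increments $\varepsilon_k^T(u_k-x_k)$ carry no $\alpha_0$-dependence, whereas the paper's do, and your ghost-regret term is deterministic and so properly belongs with the $\Omega_2$-type condition rather than being ``absorbed into the $\ln(1/\delta)$ terms'' as you suggest); you prove the proposition with thresholds of the same functional form but different absolute constants. If you wanted to match the paper exactly you would have to adopt its fixed-$(x,w)$ martingale and accept the attendant uniformity gap, so on balance your version is the more defensible proof.
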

\begin{remark}
Convergence of the stochastic primal-dual algorithm was proved in~\cite[Section~3.1]{nemirovski2009robust}. Compared to the analysis in~\cite{nemirovski2009robust}, the analysis in~\cref{prop:convergence1} poses some refined aspects tailored to our purposes. First, the analysis in~\cite[Section~3.1]{nemirovski2009robust} considers a solution which is so-called the sliding average of the primal and dual iterations, while the solution considered in~\cref{prop:convergence1} uses an average of the entire iteration until the current step, which is simpler.
\end{remark}

\section{Saddle-point formulation of MARLP}

In the previous section, we introduced the notion of the saddle-point and a stochastic primal-dual algorithm to find it. In this section, we study a saddle-point formulation of the distributed optimization~\eqref{eq:distributed-opt} as a next step. Once obtained, the MARLP can be solved by using the stochastic primal-dual algorithm. For notational simplicity, we first introduce stacked vector and matrix notations.
\begin{align*}
&\bar w: = \begin{bmatrix}
   w_1 \\
    \vdots \\
   w_N \\
\end{bmatrix},\quad \bar r^\pi  := \begin{bmatrix}
   r_1^\pi \\
    \vdots \\
   r_N^\pi \\
\end{bmatrix},\quad \hat r(s,a,s') = \begin{bmatrix}
   \hat r_1 (s,a,s') \\
    \vdots   \\
   \hat r_N (s,a,s') \\
\end{bmatrix},\\
&\bar P^{\pi}:=I_N \otimes P^{\pi},\quad \bar {\bf L}:={\bf L}
\otimes I_{|{\mathcal S}|},\quad \bar D:=I_N \otimes D,\\
& \bar\Phi:=I_N \otimes \Phi,\quad \bar \Pi := I_N \otimes \Pi,\quad \bar B :=
\bar\Phi^T \bar D(I_{N|{\mathcal S}|}-\gamma\bar P^{\pi})\bar \Phi.
\end{align*}

Using those notations, the {\rm MSPBE} loss function
in~\eqref{eq:distributed-opt} can
be compactly expressed as
\begin{align*}
&\sum_{i=1}^N {\rm MSPBE}_i(w_i)\\
=& \frac{1}{2}(\bar\Phi^T \bar D\bar r^\pi-\bar B\bar w)^T (\bar
\Phi^T \bar D\bar \Phi)^{-1} (\bar\Phi^T \bar D\bar r^\pi-\bar
B\bar w),
\end{align*}
where $\otimes$ is the Kronecker's product. Note that by the mean
connectivity~\cref{assumption:connected}, the consensus
constraint~\eqref{eq:consensus-constraint} can be expressed as
$\bar {\bf L}\bar w=0$, as ${\bf L}$ has a simple eigenvalue $0$
with its corresponding eigenvector ${\bf 1}_{|{\mathcal
S}|}$~\cite[Lemma~1]{olfati2006flocking}. Motivated by the
continuous-time consensus optimization algorithms
in~\cite{wang2010control,wang2011control,gharesifard2014distributed},
we convert the problem~\eqref{eq:distributed-opt} into the
augmented Lagrangian
problem~\cite[sec.~4.2]{bertsekas1999nonlinear}
\begin{align}
&\mathop{\min}_{\bar w} \frac{1}{2}(\bar\Phi^T \bar D\bar r^\pi-\bar B\bar w)^T (\bar\Phi^T \bar D\bar\Phi)^{-1}(\bar\Phi^T \bar D\bar r^\pi-\bar B\bar w)\nonumber\\
+&\bar w^T \bar {\bf L} \bar {\bf L} \bar w\label{eq:optimization1}\\
&{\rm subject\,\,to}\quad \bar {\bf L}\bar w = 0,\nonumber
\end{align}
where a quadratic penalty term $\bar w^T \bar {\bf L} \bar {\bf L}
\bar w$ for the equality constraint $\bar {\bf L}\bar w = 0$ is
introduced. If the model is known, the above problem is an
equality constrained quadratic programming problem, which can be
solved by means of convex optimization methods~\cite{Boyd2004}.
Otherwise, the problem can be still solved  using stochastic
algorithms with observations. The latter case is our main concern.
To develop model-free stochastic algorithms, some issues need to
be taken into account. First, to estimate a stochastic estimate of the gradient, we need to assume that at least two independent next state samples can be drawn from any current state, which is impossible in most practical applications. The problem is often called the double sampling problem~\cite{bertsekas1996neuro}. Second, the inverse matrix $(\bar\Phi^T \bar D\bar\Phi)^{-1}$ in the
objective function~\eqref{eq:optimization1} needs to be removed. In particular, the main reason we use the linear function approximation is due to the large size of the state-space to the extent that enumerating numbers in the value vector is computationally demanding or even not possible. The computation of the inverse $(\bar\Phi^T \bar D\bar\Phi)^{-1}$ is not possible due to both its computational complexity and the existence of the matrix $\bar D$ including the stationary state distribution, which is assumed to be unknown in most RL settings. In GTD~\cite{sutton2009fast}, this problem is resolved  using a
dual problem~\cite{macua2015distributed}. Following the same
direction, we convert~\eqref{eq:optimization1} into the equivalent
optimization problem
\begin{align}
&\mathop{\min}_{\bar\varepsilon,\bar h,\bar w} \frac{1}{2}\bar
\varepsilon^T (\bar\Phi^T\bar D\bar\Phi)^{-1}\bar \varepsilon+\frac{1}{2}\bar h^T \bar h \label{eq:optimization2}\\
&{\rm subject\,\,to}\quad \begin{bmatrix}
   \bar B & I & 0\\
   \bar {\bf L} & 0 & -I\\
   \bar {\bf L} & 0 & 0\\
\end{bmatrix} \begin{bmatrix}
   \bar w\\
   \bar\varepsilon\\
   \bar h\\
\end{bmatrix}+ \begin{bmatrix}
   -\bar\Phi^T \bar D\bar r^{\bar\pi}\\
   0\\
   0\\
\end{bmatrix}=0,\nonumber
\end{align}
where $\bar\varepsilon$ and $\bar h$ are newly introduced
parameters. The next key step is to derive its Lagrangian dual
problem~\cite{Boyd2004}, which can be obtained  using standard
approaches~\cite{Boyd2004}.
\begin{proposition}\label{prop:dual-problem}
The Lagrangian dual problem of~\eqref{eq:optimization2} is given
by
\begin{align}
&\min_{\bar\theta,\bar v,\bar\mu}\psi (\bar\theta,\bar
v,\bar\mu)\label{eq:dual-problem1}\\
&{\rm subject\,\,to}\quad\bar B^T \bar\theta-\bar {\bf L}^T\bar
v- \bar {\bf L}^T\bar\mu=0,\nonumber
\end{align}
where $\psi(\bar\theta,\bar v,\bar\mu ):= \frac{1}{2}\bar\theta^T
(\bar \Phi^T \bar D\bar\Phi)\bar \theta-\bar\theta^T\bar\Phi^T\bar
D\bar r^{\bar\pi}+\frac{1}{2}\bar v^T \bar v$.
\end{proposition}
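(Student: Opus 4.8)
The plan is to obtain~\eqref{eq:dual-problem1} by a direct Lagrangian computation: form the Lagrangian of the linearly constrained convex quadratic program~\eqref{eq:optimization2}, minimize out the primal variables $\bar w,\bar\varepsilon,\bar h$ one block at a time, and identify what remains as the (negated) dual objective together with its feasibility constraint. Writing $M:=\bar\Phi^T\bar D\bar\Phi$ and $c:=\bar\Phi^T\bar D\bar r^{\bar\pi}$, and attaching Lagrange multipliers $-\bar\theta$, $\bar v$, and $\bar\mu$ to the three stacked equality constraints of~\eqref{eq:optimization2} (the sign on the first is chosen so that the end result is exactly $\psi$ rather than a sign-flipped variant), the Lagrangian is
\begin{align*}
\Lambda =\,& \tfrac{1}{2}\bar\varepsilon^T M^{-1}\bar\varepsilon + \tfrac{1}{2}\bar h^T\bar h - \bar\theta^T(\bar B\bar w+\bar\varepsilon-c)\\
&+ \bar v^T(\bar{\bf L}\bar w-\bar h) + \bar\mu^T\bar{\bf L}\bar w,
\end{align*}
and the dual function is $g(\bar\theta,\bar v,\bar\mu)=\inf_{\bar w,\bar\varepsilon,\bar h}\Lambda$, the dual problem being to maximize $g$.

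First I would carry out the minimization over $\bar w$. Since $\bar w$ enters $\Lambda$ only through the linear term $(-\bar B^T\bar\theta+\bar{\bf L}^T\bar v+\bar{\bf L}^T\bar\mu)^T\bar w$ (recall $\bar{\bf L}^T=\bar{\bf L}$, as ${\bf L}$ is symmetric), the infimum over $\bar w$ equals $-\infty$ unless $\bar B^T\bar\theta-\bar{\bf L}^T\bar v-\bar{\bf L}^T\bar\mu=0$; this is precisely the affine constraint in~\eqref{eq:dual-problem1}, and on it the $\bar w$-term vanishes. The residual minimization over $\bar\varepsilon$ and $\bar h$ then decouples into two unconstrained strictly convex quadratics. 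The $\bar\varepsilon$-problem, $\min_{\bar\varepsilon}\{\tfrac12\bar\varepsilon^T M^{-1}\bar\varepsilon-\bar\theta^T\bar\varepsilon\}$, is well posed because $M=I_N\otimes(\Phi^T D\Phi)$ is positive definite ($\Phi$ has full column rank and $D\succ 0$), with minimizer $\bar\varepsilon^\star=M\bar\theta$ and optimal value $-\tfrac12\bar\theta^T M\bar\theta$; this is the step in which the awkward inverse $M^{-1}$ is traded for $M$ itself, which is the whole point of passing to the dual. The $\bar h$-problem, $\min_{\bar h}\{\tfrac12\bar h^T\bar h-\bar v^T\bar h\}$, has minimizer $\bar h^\star=\bar v$ and optimal value $-\tfrac12\bar v^T\bar v$.

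Substituting these back and retaining the constant term $\bar\theta^T c$ gives, on the feasible subspace,
\begin{align*}
g(\bar\theta,\bar v,\bar\mu) =\,& -\tfrac12\bar\theta^T(\bar\Phi^T\bar D\bar\Phi)\bar\theta + \bar\theta^T\bar\Phi^T\bar D\bar r^{\bar\pi} - \tfrac12\bar v^T\bar v\\
=\,& -\psi(\bar\theta,\bar v,\bar\mu),
\end{align*}
so that maximizing the dual function subject to dual feasibility is equivalent to minimizing $\psi$ subject to $\bar B^T\bar\theta-\bar{\bf L}^T\bar v-\bar{\bf L}^T\bar\mu=0$, which is exactly~\eqref{eq:dual-problem1}. (If one additionally wants the absence of a duality gap, it suffices to note that~\eqref{eq:optimization2} is a feasible convex quadratic program with only affine constraints — take $\bar w=0$, $\bar\varepsilon=c$, $\bar h=0$ — so strong duality holds; this is not needed for the statement itself, which only asserts the form of the dual.)

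I do not expect a real obstacle: the argument is a textbook application of Lagrangian duality to a quadratic program with affine constraints. The only points demanding a little care are the bookkeeping of multiplier signs — chosen exactly so that the $\bar w$-minimization yields the stated constraint and the eliminated quadratics reproduce $\psi$ rather than its sign-flipped cousin — and the justification that $\bar\Phi^T\bar D\bar\Phi$ is invertible, which is immediate from the standing full-column-rank hypothesis on $\Phi$ together with positivity of the stationary distribution $d$.
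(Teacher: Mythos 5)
Your proof is correct and follows essentially the same route as the paper: form the Lagrangian with the same sign convention on the multipliers, observe that the infimum over $\bar w$ is finite only on the affine set $\bar B^T\bar\theta-\bar{\bf L}^T\bar v-\bar{\bf L}^T\bar\mu=0$, and eliminate $\bar\varepsilon$ and $\bar h$ via $\bar\varepsilon^\star=(\bar\Phi^T\bar D\bar\Phi)\bar\theta$ and $\bar h^\star=\bar v$ to recover $\psi$. Your added remarks on the invertibility of $\bar\Phi^T\bar D\bar\Phi$ and on strong duality are correct but, as you note, not needed for the statement itself.
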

\begin{proof}
The dual problem can be obtained  using  standard
manipulations as in~\cite[Chap.~5]{Boyd2004}. Define the Lagrangian
function
\begin{align*}
&{\mathcal L}(\bar \varepsilon,\bar h,\bar w,\bar\theta ,\bar
v,\bar\mu)\\
=& \frac{1}{2}\bar \varepsilon^T (\bar\Phi^T \bar D \bar\Phi)^{-1} \bar\varepsilon+\frac{1}{2}\bar h^T \bar h + \bar\theta ^T (\bar\Phi^T \bar D\bar r^{\pi}-\bar B\bar w-\bar\varepsilon )\\
&+\bar v^T(\bar{\bf L}\bar w-\bar h) + \bar\mu^T \bar {\bf L}\bar w\\
=& \frac{1}{2}\bar\varepsilon^T (\bar\Phi^T \bar D\bar \Phi)^{
-1} \bar \varepsilon-\bar \theta ^T \bar \varepsilon+
\frac{1}{2}\bar h^T\bar h-\bar v^T \bar h+\bar \theta^T\bar
\Phi^T \bar D\bar r^{\pi}\\
&- (\bar\theta^T \bar B - \bar v^T \bar {\bf L} -\bar \mu ^T \bar
{\bf L})\bar w,
\end{align*}
where $\bar\theta,\bar v,\bar\mu$ are Lagrangian multipliers. If
we fix $(\bar \theta ,\bar v,\bar \mu)$, then the problem $\min
_{\bar\varepsilon,\bar h,\bar w} {\mathcal L}(\bar \varepsilon ,\bar
h,\bar w,\bar \theta ,\bar v,\bar \mu )$ has a finite optimal
value, when $\bar \theta ^T \bar B - \bar v^T \bar {\bf L} - \bar
\mu ^T \bar {\bf L} = 0$. The optimal solutions satisfy $\bar
\varepsilon=(\bar \Phi ^T \bar D\bar \Phi)\bar \theta ,\bar h =
\bar v$. Plugging them into the Lagrangian function, the dual
problem is obtained.
\end{proof}

One can observe that the inverse matrix $(\bar\Phi^T \bar
D\bar\Phi)^{-1}$ no more appears in the dual
problem~\eqref{eq:dual-problem1}. To
solve~\eqref{eq:dual-problem1}, we again construct the following
Lagrangian function of~\eqref{eq:dual-problem1} as
in~\cite{macua2015distributed}:
\begin{align}
&{\mathcal L}(\bar\theta,\bar v,\bar\mu,\bar w):=\psi(\bar\theta,\bar
v,\bar\mu)+[\bar B^T \bar\theta-\bar {\bf L}^T\bar v-\bar {\bf
L}^T\bar\mu]^T \bar w,\label{eq:Lagrangian1}
\end{align}
where $\bar w$ is the Lagrangian multiplier. We further modify~\eqref{eq:Lagrangian1} by adding the term $-(\kappa/2) \bar w^T \bar {\bf L} \bar w$:
\begin{align}
{\mathcal L}(\bar\theta,\bar v,\bar\mu,\bar w):=&\psi(\bar\theta,\bar
v,\bar\mu)+[\bar B^T \bar\theta-\bar {\bf L}^T\bar v-\bar {\bf
L}^T\bar\mu]^T \bar w\nonumber\\
& -(\kappa/2)\bar w^T \bar {\bf L} \bar w,\label{eq:Lagrangian2}
\end{align}
where $\kappa \geq 0$ is a design parameter. Note that the solution of the original problem is not changed for any $\kappa\geq 0$. The term, $-(\kappa/2)\bar w^T \bar {\bf L} \bar w$, is added to accelerate the convergence in terms of the consensus of $\bar w$.

Since the Lagrangian function~\eqref{eq:Lagrangian2} is convex-concave, the solutions of the optimization in~\eqref{eq:dual-problem1}
are identical to solutions $(\bar\theta^*,\bar v^*,\bar\mu^*,\bar
w)$ of the corresponding saddle-point problem~\cite{nedic2009subgradient}
\begin{align}
&\max_{\bar w} \min_{\bar\theta,\bar v,\bar \mu}
{\mathcal L}(\bar\theta,\bar v,\bar\mu,\bar
w)=\min_{\bar\theta,\bar v,\bar\mu} \max_{\bar w}
{\mathcal L}(\bar\theta,\bar v,\bar\mu,\bar w).\label{eq:saddle}
\end{align}
or equivalently,
\begin{align}
&{\mathcal L}(\bar\theta^*,\bar v^*,\bar \mu^*,\bar w)\le {\mathcal
L}(\bar\theta^*,\bar v^*,\bar\mu^*,\bar w^*) \le {\mathcal
L}(\bar\theta,\bar v,\bar\mu,\bar w^*),\label{eq:saddle2}
\end{align}
for all $(\bar\theta,\bar v,\bar\mu,\bar w)$. Now, the saddle-point problem in~\eqref{eq:saddle} can be solved by using the stochastic primal-dual algorithm~\cite{nemirovski2009robust}.

\section{Solution analysis}
In the previous section, we derived a saddle-point formulation of the distributed optimization~\eqref{eq:distributed-opt}. In this section, we rigorously analyze the set of saddle-points. In particular, we obtain an exact formulations of the set of saddle-points which solve~\eqref{eq:saddle}. The explicit formulations of the saddle-points will be used in subsequent sections to develop the proposed RL algorithm. According to the standard results in convex optimization~\cite[Section 5.5.3, pp. 243]{Boyd2004}, any saddle-point $(\bar\theta^*,\bar v^*,\bar\mu^*,\bar w^*)$
satisfying~\eqref{eq:saddle2} must satisfy the following KKT
condition although its converse is not true in general:
\begin{align}
0 =& \nabla_{\bar\theta} {\mathcal L}(\bar\theta^*,\bar
v^*,\bar\mu^*,\bar w^*)\nonumber\\
=&(\bar\Phi^T \bar D\bar\Phi)\bar\theta^*-\bar\Phi^T \bar D\bar
r^{\pi}+\bar\Phi^T \bar D(I_{N|{\mathcal S}|} - \gamma \bar P^{\pi})\bar \Phi \bar w^*,\nonumber\\
0=& \nabla_{\bar v}{\mathcal
L}(\bar\theta^*,\bar v^*,\bar\mu^*,\bar w^*)=\bar v^* - \bar {\bf L}\bar w^*,\nonumber\\
0=& \nabla_{\bar\mu} {\mathcal L}(\bar\theta^*,\bar v^*,\bar\mu^*,\bar
w^*)=\bar {\bf L}\bar w^*,\nonumber\\
0=& \nabla_{\bar w} {\mathcal L}(\bar\theta^*,\bar
v^*,\bar\mu^*,\bar w^*)\nonumber\\
=&\bar {\bf L}\bar v^* +\bar {\bf L}\bar \mu^* -\bar\Phi^T
(I_{N|{\mathcal S}|}-\gamma\bar P^{\pi})^T \bar
D\bar\Phi\bar\theta^* - \kappa\bar {\bf L}\bar w^*.\label{eq:KKT-points}
\end{align}
However, by investigating the KKT points, we can obtain useful information on the saddle-points. We first establish the fact that the set of KKT points corresponds to the set of optimal solutions of the consensus optimization problem~\eqref{eq:consensus-constraint}.
\begin{proposition}\label{prop:stationary-points}
The set of all the KKT points satisfying~\eqref{eq:KKT-points} is
given by
\begin{align*}
&{\mathcal R}: = \{ \bar\theta^* \}\times \{\bar v^*\}\times {\mathcal
F}^* \times \{{\bf 1}_N  \otimes w^* \},
\end{align*}
where $\bar v^*=0$, $w^*$ is given in~\eqref{eq:solution} (the unique solution of the
projected Bellman equation~\eqref{eq:projected-Bellman-eq}),
\begin{align*}
\bar\theta^*=& (\bar \Phi^T \bar D\bar\Phi)^{-1} \bar \Phi^T\bar D(-\bar r^\pi+\bar \Phi \bar w^* - \gamma\bar P^{\bar\pi} \bar\Phi\bar w^* )\\
 =& (\bar\Phi^T \bar D\bar \Phi)^{-1} \bar \Phi^T \bar D\left( -\bar r^\pi+{\bf 1}_N \otimes \frac{1}{N}\sum_{i=1}^N {r_i^{\pi}}
\right),
\end{align*}
and ${\mathcal F}^*$ is the set of all solutions to the linear
equation for $\bar \mu$
\begin{align}
&{\mathcal F}^*:= \{\bar\mu:\bar {\bf L}\bar \mu=\bar\Phi^T (I_{N|{\mathcal S}|} - \gamma\bar P^\pi)^T \bar D\bar
\Phi\bar\theta^*\}.\label{eq:set-F}
\end{align}
\end{proposition}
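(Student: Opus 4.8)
The plan is to work through the four KKT equations in \eqref{eq:KKT-points} in the order $\bar w^*$, $\bar v^*$, $\bar\theta^*$, $\bar\mu^*$, each one pinning down (or constraining) the next variable, and then to extract from the last equation a solvability condition that forces the consensus value to be exactly the projected-Bellman solution \eqref{eq:solution}. First I would use the third KKT equation $\bar{\bf L}\bar w^*=0$. Since $\bar{\bf L}={\bf L}\otimes I$ and, by the mean-connectivity \cref{assumption:connected}, ${\bf 1}_N$ spans the null space of ${\bf L}$, the null space of $\bar{\bf L}$ is $\{{\bf 1}_N\otimes u\}$, so $\bar w^*={\bf 1}_N\otimes w^*$ for some $w^*\in{\mathbb R}^q$. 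Substituting this into the second KKT equation immediately gives $\bar v^*=\bar{\bf L}\bar w^*=0$.

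Next I would insert $\bar w^*={\bf 1}_N\otimes w^*$ into the first KKT equation. The Kronecker structure $\bar\Phi=I_N\otimes\Phi$, $\bar D=I_N\otimes D$, $\bar P^\pi=I_N\otimes P^\pi$ makes that equation block-diagonal, and since $\Phi^T D\Phi$ is nonsingular it can be solved uniquely for $\bar\theta^*$, block by block, in terms of $w^*$; collecting blocks gives the first displayed expression for $\bar\theta^*$ in stacked form. At this stage $\bar v^*$ and $\bar\theta^*$ are completely determined by $w^*$, and only $w^*$ and $\bar\mu^*$ are still free.

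The crux is the fourth KKT equation. After substituting $\bar v^*=0$ and $\bar{\bf L}\bar w^*=0$ (note the $\kappa$-term drops out, consistent with the earlier remark that $\kappa\ge0$ does not change the solution) it reduces to $\bar{\bf L}\bar\mu^*=\bar\Phi^T(I_{N|{\mathcal S}|}-\gamma\bar P^\pi)^T\bar D\bar\Phi\bar\theta^*$, i.e. $\bar\mu^*\in{\mathcal F}^*$ as defined in \eqref{eq:set-F}. Since $\bar{\bf L}$ is symmetric positive semidefinite, ${\rm range}(\bar{\bf L})={\rm null}(\bar{\bf L})^\perp=\{{\bf 1}_N\otimes u\}^\perp$, so this system in $\bar\mu^*$ is solvable if and only if its right-hand side is orthogonal to every ${\bf 1}_N\otimes u$. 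Computing that inner product and using that $\bar B^T=\Phi^T(I-\gamma P^\pi)^T D\Phi$ is nonsingular, the condition collapses to $\sum_{i=1}^N\bar\theta^*_i=0$; substituting the block formula for $\bar\theta^*_i$ from the previous step, this becomes $\Phi^T D(I-\gamma P^\pi)\Phi\,w^*=\Phi^T D\tfrac1N\sum_{i=1}^N r_i^\pi$, hence---again by nonsingularity of $\Phi^T D(I-\gamma P^\pi)\Phi$---equivalent to $w^*$ being the solution \eqref{eq:solution} of the projected Bellman equation. With $w^*$ so determined, the identity $\Phi^T D(I-\gamma P^\pi)\Phi\,w^*=\Phi^T D\tfrac1N\sum_i r_i^\pi$ lets me rewrite $\bar\theta^*$ in the second displayed form, and the solution set of the $\bar\mu$-equation is exactly the nonempty affine set ${\mathcal F}^*$. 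Conversely, every tuple in ${\mathcal R}$ satisfies all four KKT equations by construction: the second and third hold because $\bar v^*=0$ and $\bar{\bf L}\bar w^*=0$ (as $\bar w^*={\bf 1}_N\otimes w^*$), the first by the choice of $\bar\theta^*$, and the fourth by the definition of ${\mathcal F}^*$ together with $\bar v^*=0$. This establishes ${\mathcal R}$ as the full KKT set and, in particular, shows that its $\bar w$-component coincides with the optimal set of \eqref{eq:consensus-constraint} in view of \cref{prop:equivalance}.

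I expect the main obstacle to be this solvability step: correctly identifying ${\rm range}(\bar{\bf L})$, taking the ${\bf 1}_N$-projection of the fourth KKT equation, and carrying the algebra so that it collapses cleanly to \eqref{eq:solution}. Everything else---the Kronecker bookkeeping and the appeals to nonsingularity of $\Phi^T D\Phi$ and $\Phi^T D(I-\gamma P^\pi)\Phi$---is routine, the latter facts being exactly those already used in the proof of \cref{prop:equivalance}.
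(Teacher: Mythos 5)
Your proposal is correct and follows essentially the same route as the paper: consensus from $\bar{\bf L}\bar w^*=0$, then $\bar v^*=0$, then $\bar\theta^*$ from the first KKT equation, and finally the projection of the fourth KKT equation onto the ${\bf 1}_N$ direction (the paper's left-multiplication by $({\bf 1}\otimes I)^T$) forcing the projected Bellman equation. Your framing of that last step as a Fredholm-type solvability condition (${\rm range}(\bar{\bf L})={\rm null}(\bar{\bf L})^\perp$), together with the explicit converse check that every tuple in ${\mathcal R}$ satisfies all four equations, is a mild sharpening of the paper's argument (which only extracts the necessary condition and leaves nonemptiness of ${\mathcal F}^*$ and the reverse inclusion implicit), but it is not a genuinely different proof.
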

\begin{proof}

The KKT condition in~\eqref{eq:KKT-points} is equivalent to the
linear equations:
\begin{align}
&\nabla_{\bar\theta} {\mathcal L}(\bar\theta^*,\bar
v^*,\bar\mu^*,\bar w^*)\nonumber\\
=&(\bar\Phi^T \bar D\bar\Phi)\bar\theta^*-\bar\Phi^T \bar D\bar r^{\pi}+\bar\Phi^T \bar D(I_{N|{\mathcal S}|} - \gamma \bar P^{\pi})\bar \Phi \bar w^*\\
=& 0,\label{eq:appendix:eq1}\\
&\nabla_{\bar v}{\mathcal L}(\bar\theta^*,\bar v^*,\bar\mu^*,\bar w^*)=\bar v^* - \bar {\bf L}\bar w^* = 0,\label{eq:appendix:eq2}\\
&\nabla_{\bar\mu} {\mathcal L}(\bar\theta^*,\bar v^*,\bar\mu^*,\bar w^*)=\bar {\bf L}\bar w^* = 0,\label{eq:appendix:eq4}\\
&\nabla_{\bar w} {\mathcal L}(\bar\theta^*,\bar
v^*,\bar\mu^*,\bar w^*)\\
=&\bar {\bf L}\bar v^* +\bar {\bf L}\bar \mu^* -\bar\Phi^T (I_{N|{\mathcal S}|}-\gamma\bar P^{\pi})^T \bar D\bar\Phi\bar\theta^* - \kappa\bar {\bf L}\bar w^*\\
&=0.\label{eq:appendix:eq3}
\end{align}
Since the mean connectivity graph $({\bf E},{\mathcal V})$ of ${\mathcal
G}(k)$ is connected by~\cref{assumption:connected}, the dimension
of the null space of ${\bf L}$ is one. Therefore, ${\rm span}({\bf 1}_{|{\mathcal V}|})$ is the null space, and~\eqref{eq:appendix:eq4}
implies the consensus $w^*=w^*_1=\cdots=w^*_N$.
Plugging~\eqref{eq:appendix:eq4} into \eqref{eq:appendix:eq2}
yields $\bar v^*=0$. With ${\bar v}^*=0$,~\eqref{eq:appendix:eq3}
is simplified to
\begin{align}
&\bar {\bf L} \bar\mu^* = \bar\Phi^T(I_{N|{\mathcal S}|}-\gamma\bar P^{\pi})^T \bar D\bar\Phi \bar\theta^*.\label{eq:appendix:eq12}
\end{align}
In addition, from~\eqref{eq:appendix:eq1}, the stationary point
for $\bar \theta$ satisfies
\begin{align}
&\bar\theta^*=(\bar\Phi^T\bar D\bar\Phi )^{-1} \bar\Phi^T \bar D(\bar r^{\pi}-\bar\Phi\bar w^*+\gamma \bar P^{\pi}\bar\Phi\bar
w^*).\label{eq:appendix:eq13}
\end{align}

Plugging the above equation into~\eqref{eq:appendix:eq12} yields
\begin{align}
\bar {\bf L} \bar\mu^* =& \bar\Phi^T(I_{N|{\mathcal S}|}-\gamma\bar
P^{\bar\pi})^T \bar D\bar\Phi\bar\theta^*\nonumber\\
=&\bar\Phi^T (I_{N|{\mathcal S}|}-\gamma\bar P^{\bar\pi})^T \bar
D\bar\Phi(\bar\Phi^T\bar D\bar\Phi)^{-1}\bar\Phi^T \bar D\nonumber\\
&\times (\bar r^{\pi}-\bar\Phi\bar w^* + \gamma\bar
P^{\pi}\bar\Phi\bar w^*)\label{eq:appendix:eq7}.
\end{align}
Multiplying~\eqref{eq:appendix:eq7} by $({\bf 1}\otimes I)^T$ on the left results in
\begin{align*}
&(\Phi^T D(I_{|{\mathcal S}|}-\gamma P^{\pi}
)\Phi)^T (\Phi^T D\Phi )^{-1}\Phi^T D\\
&\times \left( \frac{1}{N}\sum_{i=1}^N {r_i^{\pi_i}}+\gamma
P^{\pi}\Phi w^*-\Phi w^*\right) = 0.
\end{align*}
Since $\Phi^T D(I-\gamma\bar P^\pi)\Phi$ is
nonsingular~\cite[pp.~300]{bertsekas1996neuro}, pre-multiplying
both sides of the last equation with $((\Phi^T D(I-\gamma\bar
P^\pi)\Phi )^T)^{-1}$ results in
\begin{align}
&(\Phi^T D\Phi)^{-1} \Phi^T D\left( \frac{1}{N}\sum_{i = 1}^N {r_i^{\pi_i}}  + \gamma P^\pi \Phi w^*-\Phi w^*
\right)=0.\label{eq:17}
\end{align}
Pre-multiplying~\eqref{eq:17} with $\Phi^T$ from  left yields the projected Bellman equation in~\cref{prop:equivalance}, and $w^*$
is any of its solutions. In particular,
multiplying~\eqref{eq:appendix:eq1} by $({\bf 1} \otimes I)^T$ from left, a KKT point for $\bar w^*$ is expressed as $\bar w^*={\bf 1} \otimes w^*$ with
\begin{align*}
w^*=& (\Phi^T D(I-\gamma P^{\pi} )\Phi)^{-1} \Phi^T D\\
&\times \left( \frac{1}{N}\sum_{i=1}^N {r_i^{\pi_i}} - \Pi \left(-\frac{1}{N}\sum_{i=1}^N {r_i^{\pi_i}} + \Phi w^ - \gamma P^\pi \Phi w^*\right)\right)\\
=& (\Phi^T D(I_{|{\mathcal S}|} - \gamma P^{\pi})\Phi)^{-1} \Phi^T
D\frac{1}{N}\sum_{i=1}^N {r_i^{\pi_i}}.
\end{align*}
From~\eqref{eq:appendix:eq7}, ${\bar\mu}^*$ is any solution of the linear equation~\eqref{eq:appendix:eq7}. Lastly,~\eqref{eq:17} can be rewritten as
\begin{align*}
&0 = (\bar \Phi^T \bar D\bar \Phi)^{-1} \bar \Phi ^T \bar D\left({\bf 1}_N \otimes \frac{1}{N}\sum_{i=1}^N {r_i^{\pi_i}}+\gamma \bar P^\pi  \Phi\bar w^* -\Phi \bar w^*\right).
\end{align*}
Subtracting~\eqref{eq:appendix:eq13} by the last term, we
obtain $\bar \theta^* =(\bar\Phi ^T \bar D\bar \Phi )^{-1} \bar \Phi ^T \bar D\left(-\bar r^\pi +{\bf 1}_N\otimes
\frac{1}{N}\sum_{i=1}^N {r_i^{\pi_i} }\right)$. This completes the
proof.
\end{proof}
Since the set of saddle-points of ${\mathcal L}$ in~\eqref{eq:Lagrangian1} is a subset of the KKT points, we can
estimate a potential structure of the set of saddle-points.
\begin{corollary}\label{corollary:saddle-point}
The set of all the saddle-points, ${\mathcal H}_0$, satisfying~\eqref{eq:saddle2} is
given by ${\mathcal H}_0:= \{ \bar\theta^* \}\times \{\bar v^*\}\times
\tilde {\mathcal F}^* \times \{ {\bf 1}_N  \otimes w^*\}$, where $\bar v^*=0$, $w^*$ is the unique solution of the
projected Bellman equation~\eqref{eq:projected-Bellman-eq},
$\tilde {\mathcal F}^*$ is some subset of ${\mathcal F}^*$, ${\mathcal F}^*$
and $\bar \theta ^*$ are defined in~\cref{prop:stationary-points}.
\end{corollary}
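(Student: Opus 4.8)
The plan is to read the result off from \cref{prop:stationary-points} together with the elementary fact, recalled just before the statement, that every saddle-point is a KKT point. First I would argue the inclusion ${\mathcal H}_0\subseteq{\mathcal R}$. A pair $(\bar\theta^*,\bar v^*,\bar\mu^*,\bar w^*)$ satisfying the saddle inequalities \eqref{eq:saddle2} is precisely a primal--dual optimal pair for the convex program \eqref{eq:dual-problem1}; since its only constraint $\bar B^T\bar\theta-\bar {\bf L}^T\bar v-\bar {\bf L}^T\bar\mu=0$ is affine, no constraint qualification beyond affineness is needed and the KKT system \eqref{eq:KKT-points} is necessary for optimality (see~\cite[Section~5.5.3]{Boyd2004}). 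Hence ${\mathcal H}_0$ is contained in the exact KKT set ${\mathcal R}=\{\bar\theta^*\}\times\{\bar v^*\}\times{\mathcal F}^*\times\{{\bf 1}_N\otimes w^*\}$ supplied by \cref{prop:stationary-points}, with $\bar v^*=0$ and $w^*$ the unique solution of \eqref{eq:projected-Bellman-eq}.

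Next I would exploit the product structure of ${\mathcal R}$. In ${\mathcal R}$ the $\bar\theta$-, $\bar v$- and $\bar w$-coordinates are each a singleton, so any subset of ${\mathcal R}$ — in particular ${\mathcal H}_0$ — is automatically of the form $\{\bar\theta^*\}\times\{\bar v^*\}\times\tilde{\mathcal F}^*\times\{{\bf 1}_N\otimes w^*\}$, where one simply sets $\tilde{\mathcal F}^*:=\{\bar\mu:(\bar\theta^*,\bar v^*,\bar\mu,{\bf 1}_N\otimes w^*)\in{\mathcal H}_0\}\subseteq{\mathcal F}^*$. This is exactly the asserted form, so at this point the corollary is established modulo checking that the description is not vacuous.

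To rule out vacuity I would verify ${\mathcal H}_0\neq\emptyset$, equivalently $\tilde{\mathcal F}^*\neq\emptyset$. The objective $\psi$ in \eqref{eq:dual-problem1} is a convex quadratic that is strongly convex in $(\bar\theta,\bar v)$ (using that $\bar\Phi^T\bar D\bar\Phi\succ0$) and independent of $\bar\mu$, hence bounded below; its feasible set is a nonempty affine set, since \cref{prop:stationary-points} exhibits feasible points for every $\bar\mu\in{\mathcal F}^*$ and ${\mathcal F}^*\neq\emptyset$ (the right-hand side of \eqref{eq:set-F} lies in $R(\bar {\bf L})$ by the orthogonality-to-$\ker\bar {\bf L}$ computation carried out in that proof). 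A convex quadratic bounded below on a nonempty affine set attains its minimum, and by the convex--concave correspondence noted before \eqref{eq:saddle} every minimizer of \eqref{eq:dual-problem1} extends, via a suitable multiplier $\bar w$, to a saddle-point of ${\mathcal L}$; the KKT equations then force that multiplier to be ${\bf 1}_N\otimes w^*$. Thus $\tilde{\mathcal F}^*\neq\emptyset$.

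The step I expect to be the crux — really the only non-bookkeeping point — is the asymmetry between KKT points and saddle-points: KKT necessity is automatic here (affine constraint), but the converse fails in general, so some $\bar\mu\in{\mathcal F}^*$ may violate \eqref{eq:saddle2}. This is precisely why the corollary can only claim $\tilde{\mathcal F}^*$ is \emph{some} subset of ${\mathcal F}^*$. Pinning down $\tilde{\mathcal F}^*$ exactly would require testing the minimax inequalities \eqref{eq:saddle2} directly along the affine family ${\mathcal F}^*$, which I would not attempt, since the weaker statement already suffices for the algorithmic analysis that follows.
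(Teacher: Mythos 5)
Your proposal is correct and follows essentially the same route as the paper, which gives no explicit proof for this corollary beyond the preceding remark that the saddle-point set is a subset of the KKT set ${\mathcal R}$ of \cref{prop:stationary-points}, from which the stated product form is immediate because the $\bar\theta$-, $\bar v$- and $\bar w$-components of ${\mathcal R}$ are singletons. Your additional verification that ${\mathcal H}_0\neq\emptyset$ goes beyond what the corollary literally asserts (an empty $\tilde{\mathcal F}^*$ would still be ``some subset''), but it is a worthwhile supplement since the paper only implicitly relies on non-vacuity later, in \cref{lemma:affine-space-property} and \cref{prop:saddle-point-mu}.
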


According to~\cref{corollary:saddle-point}, the set of KKT points corresponding to $\bar\theta$, $\bar v$, and $\bar w$ is a singleton $\{ \bar\theta^*\}\times \{\bar v^*\}\times \{ {\bf 1}_N  \otimes w^*\}$. Therefore, it is the unique saddle-point corresponding to $\bar\theta$, $\bar v$, and $\bar w$. On the other hand, $\tilde {\mathcal F}^*$ is a set. We can prove that $\tilde {\mathcal
F}^*$ is an affine space.
\begin{lemma}\label{lemma:affine-space-property}
$\tilde {\mathcal F}^*$ is an affine space.
\end{lemma}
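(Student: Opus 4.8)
The plan is to show $\tilde{\mathcal F}^*$ is an affine space by exhibiting it as a translate of a linear subspace. From \cref{corollary:saddle-point} we know $\tilde{\mathcal F}^*\subseteq{\mathcal F}^*$, and from \eqref{eq:set-F} the set ${\mathcal F}^*$ is itself an affine space: it is the solution set of the inhomogeneous linear equation $\bar{\bf L}\bar\mu=\bar\Phi^T(I_{N|{\mathcal S}|}-\gamma\bar P^\pi)^T\bar D\bar\Phi\bar\theta^*$, hence of the form $\bar\mu_0+\ker(\bar{\bf L})$ for any particular solution $\bar\mu_0$, where $\ker(\bar{\bf L})=\mathrm{span}({\bf 1}_N)\otimes{\mathbb R}^{|{\mathcal S}|}$ by \cref{assumption:connected}. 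So it suffices to understand which $\bar\mu$ in this affine set actually appear in a saddle-point, i.e., to characterize $\tilde{\mathcal F}^*$ among the KKT points.

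First I would fix the already-determined components $(\bar\theta^*,\bar v^*,\bar w^*)=(\bar\theta^*,0,{\bf 1}_N\otimes w^*)$ and write down the saddle-point inequalities \eqref{eq:saddle2} specialized to varying only $\bar\mu$. Because ${\mathcal L}(\bar\theta,\bar v,\bar\mu,\bar w)$ is \emph{affine} in $\bar\mu$ — the only $\bar\mu$-dependence is the term $-\bar\mu^T\bar{\bf L}\bar w$ — the "$\min$ over $\bar\mu$" side of the saddle condition forces the coefficient of $\bar\mu$ to vanish at $\bar w^*$, which is automatic since $\bar{\bf L}\bar w^*=0$; and then the condition that $(\bar\theta^*,0,\bar\mu,{\bf 1}_N\otimes w^*)$ remains a saddle-point reduces to the requirement that $\bar w^*$ be a maximizer of $\bar w\mapsto{\mathcal L}(\bar\theta^*,0,\bar\mu,\bar w)$. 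That map is concave quadratic in $\bar w$ with gradient $\bar{\bf L}\bar\mu-\bar\Phi^T(I-\gamma\bar P^\pi)^T\bar D\bar\Phi\bar\theta^*-\kappa\bar{\bf L}\bar w$; setting it to zero at $\bar w^*={\bf 1}_N\otimes w^*$ (so $\bar{\bf L}\bar w^*=0$) gives exactly the equation defining ${\mathcal F}^*$. So the set of admissible $\bar\mu$ is cut out by: (i) $\bar\mu\in{\mathcal F}^*$, and (ii) whatever additional constraint is needed so that the unconstrained maximum over $\bar w$ is attained and equals $\bar w^*$ — this is where the subtlety lies, since the quadratic in $\bar w$ has Hessian $-\kappa\bar{\bf L}$, which is only negative \emph{semi}definite, so attainment of the max imposes a linear condition on $\bar\mu$ (the gradient must vanish identically along the degenerate directions).

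The key step, then, is: $\tilde{\mathcal F}^*$ is the set of $\bar\mu\in{\mathcal F}^*$ for which the affine function $\bar w\mapsto[\bar{\bf L}\bar\mu-\bar\Phi^T(I-\gamma\bar P^\pi)^T\bar D\bar\Phi\bar\theta^*]^T\bar w$ is constant on the nullspace-directions of $\bar{\bf L}$ where $-\kappa\bar{\bf L}$ is not strictly negative definite — equivalently, after using $\bar\mu\in{\mathcal F}^*$ this linear functional in $\bar w$ already vanishes, and the residual quadratic $-(\kappa/2)\bar w^T\bar{\bf L}\bar w$ is maximized (value $0$) precisely on $\ker(\bar{\bf L})$, which contains $\bar w^*$. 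I would argue that the extra condition on $\bar\mu$ is itself linear (it is a finite collection of linear equalities obtained by pairing the gradient with a basis of $\ker(\bar{\bf L})$), so $\tilde{\mathcal F}^*=\{\bar\mu:\bar{\bf L}\bar\mu=\bar\Phi^T(I-\gamma\bar P^\pi)^T\bar D\bar\Phi\bar\theta^*\}\cap\{\bar\mu: M\bar\mu=b\}$ for an appropriate matrix $M$ and vector $b$, an intersection of two affine spaces. An intersection of affine spaces is either empty or affine; since \cref{corollary:saddle-point} (and the existence of a saddle-point) guarantees it is nonempty, $\tilde{\mathcal F}^*$ is an affine space, completing the proof.

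I expect the main obstacle to be pinning down \emph{exactly} which additional linear condition the saddle-point inequality (as opposed to the mere KKT condition) imposes on $\bar\mu$ — i.e., ruling out any nonlinear or inequality constraint sneaking in from the "$\le$" in \eqref{eq:saddle2}. The resolution is the observation that ${\mathcal L}$ is affine in $\bar\mu$ and concave-quadratic (with constant Hessian) in $\bar w$, so all the saddle conditions involving $\bar\mu$ are linear; no genuine inequality survives. One should double-check that $\bar\theta$ and $\bar v$ cannot "readjust" to compensate for a different $\bar\mu$ — but \cref{corollary:saddle-point} already fixes $\bar\theta^*$ and $\bar v^*$ uniquely, so this coupling does not occur, and the analysis genuinely decouples to the $(\bar\mu,\bar w)$ block as above.
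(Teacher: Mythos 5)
Your argument is correct and rests on the same core observation as the paper's own (much terser) proof: because ${\mathcal L}$ is affine in $\bar\mu$ and $\bar{\bf L}\bar w^*=0$, every condition that $\bar\mu$ must satisfy for $(\bar\theta^*,\bar v^*,\bar\mu,\bar w^*)$ to be a saddle-point is a linear equation, so $\tilde{\mathcal F}^*$ is a nonempty intersection of affine sets and hence affine. The paper extracts its linear condition from the equality of Lagrangian values across saddle-points, whereas you work through the first-order conditions of the $\bar w$-maximization; both are essentially the same reduction to linearity in $\bar\mu$, with your version spelling out the details the paper leaves implicit.
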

\begin{proof}
By the saddle-point property in~\eqref{eq:saddle2}, $\hat {\mathcal
F}^* =\tilde {\mathcal F}^*$ if and only if $L(\bar\theta^*,\bar
v^*,\bar\mu ^*,\bar w^*)=L(\bar\theta^*,\bar v^*,\bar\mu,\bar
w^*),\forall \bar\mu\in \hat {\mathcal F}^*$, which is equivalent to
$\bar\mu^T \bar L\bar w^*= \bar\mu^{*T}\bar L\bar w^* ,\forall
\bar \mu \in \hat {\mathcal F}^*$, proving that $\hat {\mathcal F}^* =
\tilde {\mathcal F}^*$ is an affine space.
\end{proof}

By~\cref{lemma:affine-space-property}, we can obtain an explicit formulation of a point in $\tilde {\mathcal F}^*$.
\begin{proposition}\label{prop:saddle-point-mu}
We have $\bar\mu^*=\bar{\bf L}^\dag \bar \Phi^T (I - \gamma \bar
P^\pi )^T \bar D\bar \Phi \bar \theta^* \in \tilde {\mathcal F}^*$.
\end{proposition}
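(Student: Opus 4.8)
The plan is to reduce the claim to two facts: (a) the $\bar\mu$-slice $\tilde{\mathcal F}^*$ of the saddle set actually coincides with the affine set ${\mathcal F}^*$ of~\eqref{eq:set-F}, so that it is enough to check $\bar\mu^*\in{\mathcal F}^*$; and (b) the stated vector $\bar\mu^*=\bar{\bf L}^\dag\bar\Phi^T(I-\gamma\bar P^\pi)^T\bar D\bar\Phi\bar\theta^*$ solves the linear system $\bar{\bf L}\bar\mu=\bar\Phi^T(I-\gamma\bar P^\pi)^T\bar D\bar\Phi\bar\theta^*$ defining ${\mathcal F}^*$.

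For (a), I would make the argument of \cref{lemma:affine-space-property} explicit. The Lagrangian~\eqref{eq:Lagrangian2} depends on $\bar\mu$ only through the bilinear term $-\bar\mu^T\bar{\bf L}^T\bar w=-\bar\mu^T\bar{\bf L}\bar w$, using that $\bar{\bf L}={\bf L}\otimes I_{|{\mathcal S}|}$ is symmetric. By \cref{corollary:saddle-point} every saddle point has $\bar w^*={\bf 1}_N\otimes w^*$, and by \cref{assumption:connected} this gives $\bar{\bf L}\bar w^*=({\bf L}{\bf 1}_N)\otimes w^*=0$. Consequently, with $(\bar\theta^*,\bar v^*,\bar w^*)$ pinned at their saddle values, the right inequality in~\eqref{eq:saddle2} is completely independent of $\bar\mu$ (its only $\bar\mu$-term, $-\bar\mu^T\bar{\bf L}\bar w^*$, vanishes), while the left inequality, which says $\bar w^*$ maximizes the concave map $\bar w\mapsto{\mathcal L}(\bar\theta^*,\bar v^*,\bar\mu,\bar w)$, is equivalent to the stationarity condition $\nabla_{\bar w}{\mathcal L}=0$; using $\bar{\bf L}\bar w^*=0$ and $\bar v^*=0$ this collapses to $\bar{\bf L}\bar\mu=\bar\Phi^T(I-\gamma\bar P^\pi)^T\bar D\bar\Phi\bar\theta^*$, i.e.\ to $\bar\mu\in{\mathcal F}^*$. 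Hence $\tilde{\mathcal F}^*={\mathcal F}^*$ (it being implicit in the saddle-point formulation that the compact domain assigned to $\bar\mu$ is large enough to contain this affine set, in particular its minimum-norm point).

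For (b), set $b:=\bar\Phi^T(I-\gamma\bar P^\pi)^T\bar D\bar\Phi\bar\theta^*$, which equals $\bar B^T\bar\theta^*$. Since ${\mathcal F}^*$ is nonempty — it is exhibited explicitly as a factor of the KKT set ${\mathcal R}$ in \cref{prop:stationary-points} — the system $\bar{\bf L}\bar\mu=b$ is consistent, i.e.\ $b\in R(\bar{\bf L})$; equivalently, one may verify this directly by noting that $R(\bar{\bf L})=R({\bf L})\otimes{\mathbb R}^{|{\mathcal S}|}=\{\,(z_1,\dots,z_N):\sum_i z_i=0\,\}$ (as ${\bf L}$ is symmetric with null space $\mathrm{span}({\bf 1}_N)$ under \cref{assumption:connected}) and that, from the closed form $\bar\theta^*=(\bar\Phi^T\bar D\bar\Phi)^{-1}\bar\Phi^T\bar D(-\bar r^\pi+{\bf 1}_N\otimes\frac{1}{N}\sum_j r_j^\pi)$ together with the Kronecker structure of $\bar\Phi,\bar D,\bar B$, the $N$ blocks of $b$ sum to $(\Phi^TD(I-\gamma P^\pi)\Phi)^T(\Phi^TD\Phi)^{-1}\Phi^TD(-\sum_i r_i^\pi+\sum_j r_j^\pi)=0$. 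Finally, because $\bar{\bf L}$ is symmetric, $\bar{\bf L}\bar{\bf L}^\dag$ is the orthogonal projection onto $R(\bar{\bf L})$, so $\bar{\bf L}\bar\mu^*=\bar{\bf L}\bar{\bf L}^\dag b=b$; thus $\bar\mu^*\in{\mathcal F}^*=\tilde{\mathcal F}^*$, which is the claim.

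The step I expect to require the most care is (a): promoting a KKT point to a genuine saddle point, since the KKT conditions are only necessary in general. The single fact that makes it go through is $\bar{\bf L}\bar w^*=0$, which renders the Lagrangian flat in $\bar\mu$ along the saddle; the rest is Kronecker-product bookkeeping plus the standard identity that $\bar{\bf L}\bar{\bf L}^\dag$ is the orthogonal projector onto $R(\bar{\bf L})$ when $\bar{\bf L}$ is symmetric.
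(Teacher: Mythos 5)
Your proof is correct, and it takes a genuinely different---and in fact more complete---route than the paper's. The paper's proof writes the general solution of $\bar{\bf L}\bar\mu=b$, with $b:=\bar\Phi^T(I-\gamma\bar P^\pi)^T\bar D\bar\Phi\bar\theta^*$, as $\bar{\bf L}^\dag b+(\bar{\bf L}^\dag\bar{\bf L}-I)z$ and then argues that, since $\tilde{\mathcal F}^*\subseteq{\mathcal F}^*$ and $\tilde{\mathcal F}^*$ is affine by~\cref{lemma:affine-space-property}, the particular solution $\bar{\bf L}^\dag b$ must lie in $\tilde{\mathcal F}^*$. As stated, that inference is incomplete: an affine subset of an affine set need not contain the minimum-norm point of the ambient set, so something extra is needed to single out $z=0$. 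Your part~(a) supplies exactly that missing piece by establishing the reverse inclusion ${\mathcal F}^*\subseteq\tilde{\mathcal F}^*$ directly: because $\bar{\bf L}\bar w^*=0$, the Lagrangian restricted to $\bar w=\bar w^*$ is independent of $\bar\mu$, so the minimization side of~\eqref{eq:saddle2} is insensitive to $\bar\mu$, while the maximization side---a concave quadratic in $\bar w$ with $\bar v^*=0$---holds precisely when $\bar{\bf L}\bar\mu=b$; hence every $\bar\mu\in{\mathcal F}^*$ yields a genuine saddle point and $\tilde{\mathcal F}^*={\mathcal F}^*$. Your part~(b) then verifies consistency, $b\in R(\bar{\bf L})$, via the block-sum computation using the closed form of $\bar\theta^*$ from~\cref{prop:stationary-points} (the paper's general-solution formula tacitly presupposes this), after which symmetry of $\bar{\bf L}$ gives $\bar{\bf L}\bar{\bf L}^\dag b=b$ and the claim. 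What your route buys is an argument that does not lean on the somewhat sketchy~\cref{lemma:affine-space-property}; what the paper's buys is brevity, at the cost of the gap above. The one caveat you already flag correctly: once the domains are later restricted to compact boxes, the identity becomes $\tilde{\mathcal F}^*={\mathcal F}^*\cap C_{\bar\mu}$, so one additionally needs $\bar{\bf L}^\dag b\in C_{\bar\mu}$, which is exactly what the bound on $\|\bar\mu^*\|_\infty$ in~\cref{lemma:bound-lemma3} together with~\cref{assumption:constrain-set} is designed to guarantee; at the point where the proposition is stated, the saddle-point condition~\eqref{eq:saddle2} is still unconstrained, so this does not affect the statement itself.
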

\begin{proof}
Since ${\mathcal F}^*$ is the set of solutions of the linear equation
$\bar {\bf L}\bar\mu=\bar \Phi ^T (I-\gamma \bar P^\pi)^T \bar
D\bar \Phi \bar \theta^*$, ${\mathcal F}^*$ is the set of general
solutions of the linear equation, which are given by the affine
space $\bar\mu = \bar {\bf L}^\dag \bar \Phi^T (I-\gamma \bar
P^\pi )^T \bar D\bar \Phi \bar \theta^*+(\bar{\bf L}^\dag \bar
{\bf L} - I)z$, where $\bar {\bf L}^\dag$ is a pseudo-inverse of
$\bar {\bf L}$ and $z \in {\mathbb R}^{|{\mathcal S}|N}$ is arbitrary.
In addition, since $\tilde {\mathcal F}^*\subseteq {\mathcal F}^*$ and
$\tilde {\mathcal F}^*$ is also affine
by~\cref{lemma:affine-space-property}, one concludes that $\bar
\mu = \bar {\bf L}^\dag \bar \Phi^T (I - \gamma\bar P^\pi)^T \bar
D\bar \Phi \bar \theta^*\in\tilde {\mathcal F}^*$.
\end{proof}
For some technical reasons that will become clear later,
algorithms to find a solution need to confine the search space of
an algorithm to compact and convex sets which include at least one
saddle-point in $\tilde {\mathcal R}$
in~\cref{corollary:saddle-point}. To this end, we compute a bound
on at least one saddle-point $(\bar\theta^*,\bar v^*,\bar
\mu^*,\bar w^*)$ in the following lemma.
\begin{lemma}\label{lemma:bound-lemma3}
$\bar w^*$, $\bar v^*$ and $\bar \theta ^*$ satisfy the following
bounds:
\begin{align*}
&\| \bar w^* \|_\infty\le \frac{1}{1 - \alpha}\sqrt {\frac{|{\mathcal
S}|}{\lambda_{\min } (\Phi^T \Phi)}} \left( \frac{1}{\sqrt \xi}\|
\Pi J^\pi-J^\pi \|_D
+ \sigma \right)\\
&\| \bar v^*\|_\infty \le c_{\bar v},\quad \forall
c_{\bar v}\geq 0,\\
&\| \bar\theta^* \|_\infty   \le 2\sigma |{\mathcal S}|\sqrt
{\frac{N}{\xi\lambda_{\min} (\Phi ^T \Phi)}},
\end{align*}
where $\xi :=\min_{s\in {\mathcal S}}d(s)$ as defined
in~\cref{section:DRL-overview}. Moreover, there exists a
$\bar\mu^*\in \tilde {\mathcal F}^*$ such that
\begin{align*}
&\|\bar\mu^* \|_\infty \le \|{\bf L}^\dag\|_\infty \|\Phi\|_\infty ^2 2\sigma |{\mathcal S}|^2 \sqrt {\frac{N}{\xi \lambda_{\min}(\Phi^T\Phi)}}.
\end{align*}
For the pseudo-inverse of the graph Laplacian
in~\cite{ghosh2008minimizing}, we can use the expression $ {\bf
L}^\dag = ({\bf L} + {\bf 1}_{N} {\bf 1}_{N}^T /N)^{-1}-{\bf
1}_{N} {\bf 1}_{N}^T /N$.
\end{lemma}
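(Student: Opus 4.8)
The plan is to bound each of the four quantities directly from the closed-form expressions established in \cref{prop:stationary-points} and \cref{prop:saddle-point-mu}, using only: (i) the reward bound $0\le r_i^\pi(s)\le\sigma$ inherited from $\hat r_i\in[0,\sigma]$; (ii) the norm equivalences $\|x\|_D\ge\sqrt{\xi}\,\|x\|_2$ (since $D\succeq\xi I$) and $\|\Phi y\|_2\ge\sqrt{\lambda_{\min}(\Phi^T\Phi)}\,\|y\|_2$, together with $\|x\|_\infty\le\|x\|_2\le\sqrt{|\mathcal S|}\,\|x\|_\infty$; and (iii) the Kronecker identities $\bar\Phi^T\bar D\bar\Phi=I_N\otimes(\Phi^T D\Phi)$, $\bar\Phi^T\bar D=I_N\otimes(\Phi^T D)$, $\bar P^\pi=I_N\otimes P^\pi$, which reduce the bounds on $\bar\theta^*$ and $\bar\mu^*$ to block-wise estimates. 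The bound on $\bar v^*$ is immediate: \cref{prop:stationary-points} gives $\bar v^*=0$, hence $\|\bar v^*\|_\infty=0\le c_{\bar v}$ for every $c_{\bar v}\ge0$.

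For $\bar w^*=\mathbf{1}_N\otimes w^*$ we have $\|\bar w^*\|_\infty=\|w^*\|_\infty\le\|w^*\|_2\le\|\Phi w^*\|_2/\sqrt{\lambda_{\min}(\Phi^T\Phi)}$, so it suffices to bound $\|\Phi w^*\|_2$. I would split $\|\Phi w^*\|_2\le\|\Phi w^*-\Pi J^\pi\|_2+\|\Pi J^\pi\|_2$ and use $\|\Pi J^\pi\|_2\le\|J^\pi\|_2\le\sqrt{|\mathcal S|}\,\|J^\pi\|_\infty\le\sqrt{|\mathcal S|}\,\sigma/(1-\gamma)$, leaving the error term $\|\Phi w^*-\Pi J^\pi\|_D$ as the only nontrivial piece. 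Recalling from \cref{prop:equivalance} that $\Phi w^*$ is the fixed point of the projected Bellman operator $x\mapsto\Pi(\frac{1}{N}\sum_i r_i^\pi+\gamma P^\pi x)$, which is a $\gamma$-contraction in $\|\cdot\|_D$ (the $D$-orthogonal projection $\Pi$ is non-expansive and $\|P^\pi x\|_D\le\|x\|_D$ under the stationary distribution $d$), while $J^\pi$ is the fixed point of the unprojected operator, the standard Tsitsiklis--Van Roy argument yields $\|\Phi w^*-\Pi J^\pi\|_D\le\frac{\gamma}{1-\gamma}\|\Pi J^\pi-J^\pi\|_D$, and then $\|\Phi w^*-\Pi J^\pi\|_2\le\frac{1}{\sqrt\xi}\|\Phi w^*-\Pi J^\pi\|_D$. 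Collecting constants (using $\sqrt{|\mathcal S|}\ge1$ to merge the two terms, and reading the $\alpha$ in the statement as the discount factor $\gamma$) produces the stated bound on $\|\bar w^*\|_\infty$.

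For $\bar\theta^*$, the second expression in \cref{prop:stationary-points} shows that block $i$ of $\bar\theta^*$ equals $(\Phi^T D\Phi)^{-1}\Phi^T D\,x_i$ with $x_i:=-r_i^\pi+\frac{1}{N}\sum_j r_j^\pi$, which satisfies $\|x_i\|_\infty\le\sigma$ (both terms lie in $[0,\sigma]$), hence $\|x_i\|_2\le\sqrt{|\mathcal S|}\,\sigma$; bounding $\|(\Phi^T D\Phi)^{-1}\Phi^T D\,x_i\|_2$ through the operator norm of $(\Phi^T D\Phi)^{-1}\Phi^T D$ (controlled by $\xi$ and $\lambda_{\min}(\Phi^T\Phi)$) and then using $\|\bar\theta^*\|_\infty\le\|\bar\theta^*\|_2=(\sum_i\|\theta_i^*\|_2^2)^{1/2}\le\sqrt N\max_i\|\theta_i^*\|_2$ yields the factor $\sqrt N$ and the coefficient $2\sigma|\mathcal S|\sqrt{N/(\xi\lambda_{\min}(\Phi^T\Phi))}$. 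For $\bar\mu^*$, I would use the representative $\bar\mu^*=\bar{\mathbf L}^\dagger\bar\Phi^T(I-\gamma\bar P^\pi)^T\bar D\bar\Phi\bar\theta^*\in\tilde{\mathcal F}^*$ from \cref{prop:saddle-point-mu}, apply submultiplicativity of the induced $\infty$-norm, and use $\|\bar{\mathbf L}^\dagger\|_\infty=\|{\mathbf L}^\dagger\otimes I_{|\mathcal S|}\|_\infty=\|{\mathbf L}^\dagger\|_\infty$, $\|(I-\gamma\bar P^\pi)^T\|_\infty\le1+\gamma\le2$, $\|\bar D\|_\infty\le1$, and the fact that $\|\bar\Phi^T\|_\infty$, $\|\bar\Phi\|_\infty$ are controlled by $\|\Phi\|_\infty$ up to an $|\mathcal S|$-factor (from comparing row- and column-sum norms over $|\mathcal S|$ entries); substituting the bound on $\|\bar\theta^*\|_\infty$ just obtained then gives the last displayed inequality. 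The pseudo-inverse identity ${\mathbf L}^\dagger=({\mathbf L}+\mathbf{1}_N\mathbf{1}_N^T/N)^{-1}-\mathbf{1}_N\mathbf{1}_N^T/N$ is checked directly: under \cref{assumption:connected}, ${\mathbf L}$ is symmetric positive semidefinite with null space $\mathrm{span}(\mathbf{1}_N)$, so the two commuting summands share an eigenbasis and the Penrose conditions hold eigenvalue-by-eigenvalue, which is the form used in \cite{ghosh2008minimizing}.

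I expect the $\bar w^*$ bound to be the main obstacle, since one must invoke the $\gamma$-contraction of the projected Bellman operator in the $D$-weighted norm (a sup-norm contraction is unavailable for $\Pi P^\pi$) and then pass carefully from $\|\cdot\|_D$ to $\|\cdot\|_\infty$ via $\|\cdot\|_2$, keeping the dependence on $\xi$ and $\lambda_{\min}(\Phi^T\Phi)$ correct; the remaining bounds are essentially matrix-norm bookkeeping, the only subtleties being the $\sqrt N$ blow-up from stacking $N$ identical blocks and the $|\mathcal S|$-factors arising from comparing $\ell_1$-type and $\ell_\infty$-type induced norms.
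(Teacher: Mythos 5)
Your overall strategy---reading off $\bar v^*=0$, $\bar w^*={\bf 1}_N\otimes w^*$, and the closed forms of $\bar\theta^*$ and $\bar\mu^*$ from \cref{prop:stationary-points} and \cref{prop:saddle-point-mu}, then converting between $\|\cdot\|_\infty$, $\|\cdot\|_2$ and $\|\cdot\|_D$ via $\xi$, $\lambda_{\min}(\Phi^T\Phi)$ and $|{\mathcal S}|$---is exactly the paper's, and your treatment of $\bar v^*$, $\bar\theta^*$ and $\bar\mu^*$ matches the paper's bookkeeping (your per-block bound $\|x_i\|_\infty\le\sigma$ is even slightly tighter than the paper's $2\sigma$, and your verification of the pseudo-inverse identity is a reasonable substitute for the paper's citation).

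There is, however, one step in your $\bar w^*$ bound that fails as written: the inequality $\|\Pi J^\pi\|_2\le\|J^\pi\|_2$. Here $\Pi=\Phi(\Phi^TD\Phi)^{-1}\Phi^TD$ is the $D$-orthogonal projection onto $R(\Phi)$; with respect to the Euclidean inner product it is an \emph{oblique} projection, and oblique projections are in general expansive in $\|\cdot\|_2$. (For instance, with $\Phi=(1,1)^T$ and $D=\mathrm{diag}(0.1,0.9)$ one gets $\|\Pi e_2\|_2=\sqrt{2}\cdot 0.9>1$.) $\Pi$ is nonexpansive only in $\|\cdot\|_D$, so repairing your decomposition $\|\Phi w^*\|_2\le\|\Phi w^*-\Pi J^\pi\|_2+\|\Pi J^\pi\|_2$ forces a detour through the $D$-norm on the second term as well, which introduces an extra $\xi^{-1/2}$ in front of the $\sigma$-term and therefore does not reproduce the stated constant. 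The paper sidesteps this entirely by decomposing against $J^\pi$ rather than $\Pi J^\pi$: it bounds $\|\Phi w^*\|_\infty\le\|\Phi w^*-J^\pi\|_\infty+\|J^\pi\|_\infty$, controls the first term via $\|\Phi w^*-J^\pi\|_D\le\frac{1}{1-\alpha}\|\Pi J^\pi-J^\pi\|_D$ (the Tsitsiklis--Van Roy bound, \cite[Prop.~6.10]{bertsekas1996neuro}) together with $\|\cdot\|_\infty\le\|\cdot\|_2\le\xi^{-1/2}\|\cdot\|_D$, and controls the second term directly by $\|J^\pi\|_\infty\le\sigma/(1-\alpha)$, never needing a Euclidean bound on $\Pi$. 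Your contraction argument for $\|\Phi w^*-\Pi J^\pi\|_D\le\frac{\gamma}{1-\gamma}\|\Pi J^\pi-J^\pi\|_D$ is fine in itself; it is only the passage from $\Pi J^\pi$ back to $J^\pi$ in the Euclidean norm that needs to be rerouted.
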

\begin{proof}
To prove~\cref{lemma:bound-lemma3}, we will first prove a bound on
$w^* \in {\mathcal W}^*$.\\
{\bf Claim:}
If $w^*$ is an optimal solution presented
in~\cref{prop:equivalance}, then
\begin{align*}
&\| w^*\|_\infty\le \frac{1}{1 - \alpha}\sqrt {\frac{|{\mathcal
S}|}{\lambda_{\min}(\Phi^T\Phi)}} \left( \frac{1}{\sqrt\xi
}\left\| \Pi J^\pi- J^\pi\right\|_D + \sigma \right).
\end{align*}
{\bf Proof of Claim:}
We first bound the term $\| \Phi w^* \|_\infty$ as follows:
\begin{align*}
\| \Phi w^* \|_\infty & = \| \Phi w^* - J^\pi
+ J^\pi \|_\infty\\
& \le \| \Phi w^*- J^\pi \|_\infty + \|
J^\pi \|_\infty\\
&\le \| \Phi w^*  - J^\pi \|_2  + \| J^\pi
\|_\infty\\
&\le \frac{1}{\xi } \| \Phi w^*  - J^\pi \|_D  +
\| J^\pi\|_\infty\\
&\le \frac{1}{\sqrt{\xi}}\frac{1}{1 - \alpha}\| \Pi
J^\pi-J^\pi \|_D  + \| J^\pi \|_\infty\\
&\le \frac{1}{\sqrt{\xi}}\frac{1}{1 - \alpha} \| \Pi J^\pi - J^\pi
\|_D + \frac{\sigma }{1-\alpha},
\end{align*}
where the first inequality follows from the triangle inequality,
the second inequality uses $\| \cdot \|_\infty \leq \| \cdot
\|_2$, the third inequality uses $\sqrt {x^T x}  \le \sqrt {x^T
Dx/\min_{s \in {\mathcal S}} d(s)}= \sqrt {x^T Dx} / {\sqrt \xi}$, the
fourth inequality comes from~\cite[Prop.~6.10]{bertsekas1996neuro},
and the last inequality uses the bound on the rewards. On the
other hand, its lower bound can be obtained as
\begin{align*}
\| \Phi w^* \|_\infty \ge&\frac{1}{{\sqrt {|{\mathcal S}|} }} \| \Phi
w^* \|_2\\
\ge& \frac{1}{\sqrt {|{\mathcal S}|}} \| w^*\|_2 \sqrt{\lambda_{\min}(\Phi^T \Phi )} \\
\ge& \frac{1}{{\sqrt {|{\mathcal S}|} }}\| w^* \|_\infty
\sqrt{\lambda_{\min }(\Phi ^T \Phi )},
\end{align*}
where the first inequality comes from $\| v \|_2  \le \sqrt{n} \|
v \|_\infty$ for any $v \in {\mathbb R}^n$ and the second
inequality uses $\| \Phi w^* \|_2  = \sqrt{(w^*)^T \Phi ^T \Phi
w^* }  \ge \sqrt {(w^* )^T \lambda_{\min}(\Phi^T \Phi)w^*}=\sqrt
{\lambda_{\min} (\Phi^T \Phi )} \left\| w^*\right\|_2$. Combining
the two relations completes the proof. $\blacksquare$

The first bound easily follows from $\bar w^*= {\bf 1}_N \otimes w^*$
and the {\bf Claim}. Since $\bar v^* = 0$
from~\cref{prop:stationary-points}, the second inequality is
obvious. For the third bound, we use the expression for $\bar
\theta^*$ in~\cref{prop:stationary-points} to prove
\begin{align*}
\| \bar\Phi^T \bar \theta^* \|_\infty = & \left\| \bar \Pi \left(
-\bar r^\pi + {\bf 1} \otimes \frac{1}{N}\sum_{i=1}^N {r_i^{\pi _i
} }\right) \right\|_\infty\\
\le & \left\| \bar \Pi \left( -\bar r^\pi + {\bf 1} \otimes
\frac{1}{N}\sum_{i = 1}^N {r_i^{\pi_i}} \right)
\right\|_2\\
\le & \frac{1}{\sqrt \xi}\left\| \bar \Pi \left(-\bar r^\pi
+ {\bf 1} \otimes \frac{1}{N}\sum_{i = 1}^N {r_i^{\pi _i }} \right) \right\|_D\\
\le & \frac{1}{\sqrt \xi}\left\|  - \bar r^\pi + {\bf 1}
\otimes \frac{1}{N}\sum_{i = 1}^N {r_i^{\pi_i}} \right\|_D\\
\le & \frac{1}{\sqrt \xi}\left\|-\bar r^\pi + {\bf 1}
\otimes \frac{1}{N}\sum_{i=1}^N {r_i^{\pi_i}} \right\|_2\\
\le & \frac{1}{\sqrt \xi}\sqrt {N|{\mathcal S}|} \left\| - \bar r^\pi
+ {\bf 1}\otimes \frac{1}{N}\sum_{i=1}^N {r_i^{\pi_i}
} \right\|_\infty\\
\le & \frac{2\sigma \sqrt{N|{\mathcal S}|}}{\sqrt \xi},
\end{align*}
where the first inequality follows from $\| \cdot \|_\infty \leq
\| \cdot \|_2$, the third inequality follows from the nonexpansive
property of the projection (see~\cite[Proof of Prop.~6.9.,
pp.~355]{bertsekas1996neuro} for details), and the fact that $\| v
\|_2  \le \sqrt{n} \left\| v \right\|_\infty$ for any $v \in
{\mathbb R}^n$ is used in the fifth inequality. Lower bounds on
$\| \bar\Phi^T\bar\theta ^* \|_\infty$ are obtained as
\begin{align*}
\| \bar \Phi^T \bar \theta^* \|_\infty \ge& \frac{1}{\sqrt
{N|{\mathcal S}|}} \| \bar \Phi ^T \bar \theta^* \|_2\\
\ge& \frac{1}{\sqrt {N|{\mathcal S}|}}\sqrt{\lambda_{\min}
(\bar \Phi ^T \bar \Phi)} \| \bar \theta^* \|_2\\
=& \sqrt {\frac{\lambda_{\min}(\Phi^T \Phi )}{|{\mathcal S}|}} \| \bar
\theta ^* \|_2 \\
\ge& \sqrt {\frac{\lambda_{\min } (\Phi^T
\Phi)}{|{\mathcal S}|}} \| \bar\theta^* \|_\infty.
\end{align*}
Combining the two inequalities yields the third bound. For the
last inequality, we use~\cref{prop:saddle-point-mu} and obtain a
bound on $\bar {\bf L}^\dag \bar \Phi ^T (I-\gamma \bar P^\pi)^T
\bar D\bar \Phi \bar \theta^* \in \tilde {\mathcal F}^*$
\begin{align*}
\| \bar \mu \|_\infty =& \| \bar {\bf L}^\dag \bar \Phi^T (I -
\gamma \bar P^\pi)^T \bar D\bar \Phi \bar\theta^*
\|_\infty\\
\le& \| \bar {\bf L}^\dag \|_\infty \| \bar\Phi^T (I - \gamma\bar
P^\pi)^T \bar D\bar \Phi \|_\infty
\| \bar \theta^* \|_\infty\\
\le& \| {\bf L}^\dag \|_\infty \| \Phi \|_\infty^2 \| (I-\gamma
P^\pi)^T D \|_\infty \| \bar
\theta^* \|_\infty\\
\le& |{\mathcal S}| \| {\bf L}^\dag \|_\infty \| \Phi \|_\infty ^2
\| \bar \theta^* \|_\infty \\
\le& |{\mathcal S}| \| {\bf L}^\dag \|_\infty  \| \Phi \|_\infty^2
2\sigma |{\mathcal S}|\sqrt {\frac{N}{\xi \lambda_{\min} (\Phi^T\Phi
)}},
\end{align*}
where the third inequality follows from the fact that absolute
values of all elements of $(I-\gamma P^\pi)^T D$ are less than
one, and the fourth inequality uses the bounds on
$\|\bar\theta^*\|_\infty$.
\end{proof}

In this section, we analyzed the set of saddle-points corresponding to the MARLP. In the next section, we introduce the proposed multi-agent RL algorithm, which solves the saddle-point problem of the MARLP in~\eqref{eq:saddle} by using the stochastic primal-dual algorithm.

\section{Primal-dual distributed GTD algorithm (primal-dual DGTD)}

In this section, we study a distributed GTD algorithm to
solve~\cref{problem:multi-agent-RL}. The main idea is to solve the saddle-point problem of the MARLP in~\eqref{eq:saddle} by using the stochastic primal-dual algorithm, where the unbiased stochastic gradient estimates are obtained by using samples of the state, action, and reward. To proceed, we first modify the saddle-point problem of the MARLP in~\eqref{eq:saddle} to a constrained saddle-point problem whose domains are confined to compact sets.

\cref{lemma:bound-lemma3} provides rough estimates of the bounds
on the sets that include at least one saddle-point of the
Lagrangian function~\eqref{eq:Lagrangian1}. Define the cube
$B_\beta:= \{ x \in {\mathbb R}^{|{\mathcal S}|N}: \| x \|_\infty \le
\beta \}$ and $C_{\bar \theta}= B_{c_{\bar \theta}+\beta _{\bar
\theta}} ,C_{\bar v} =B_{c_{\bar v} + \beta_{\bar v}} ,C_{\bar \mu
}  = B_{c_{\bar \mu }+\beta_{\bar \mu}} ,C_{\bar w}= B_{c_{\bar w}
+\beta_{\bar w}}$ for $\beta_{\bar\theta} ,\beta_{\bar v}
,\beta_{\bar \mu},\beta _{\bar w}>0$. Then, the constraint sets
satisfy $\bar\theta^* \in C_{\bar\theta}$, $\bar v^*\in C_{\bar
v}$, $\bar w^*\in C_{\bar w}$, and $C_{\bar \mu}\cap{\mathcal
F}^*\neq\emptyset$. Estimating $c_{\bar\theta} ,c_{\bar v}
,c_{\bar \mu},c_{\bar w}>0$ requires additional analysis or is
almost infeasible in most real applications. However, in practice,
we can consider sufficiently large parameters $c_{\bar\theta }
,c_{\bar v} ,c_{\bar \mu } ,c_{\bar w}>0$ so that they include at
least one solution. With this respect, we assume that sufficiently
large sets $C_{\bar \theta },C_{\bar v},C_{\bar \mu },C_{\bar w}$
satisfy $C_{\bar \mu}\cap{\mathcal F}^*\neq\emptyset$. For simpler analysis, we also assume that the solutions are included in interiors of the compact sets.
\begin{assumption}\label{assumption:constrain-set}
The constraint sets satisfy $\bar\theta^* \in C_{\bar\theta}$,
$\bar v^*\in C_{\bar v}$, $\bar w^*\in C_{\bar w}$, and $C_{\bar
\mu}\cap{\mathcal F}^*\neq\emptyset$.
\end{assumption}

Under~\cref{assumption:constrain-set}, finding a saddle-point
in~\eqref{eq:saddle} can be reduced to the constrained
saddle-point problem
\begin{align*}
&\mathop {\min}_{\bar \theta ,\bar v,\bar \mu } \mathop
{\max}_{\bar w} {\mathcal L}(\bar \theta ,\bar v,\bar \mu ,\bar w)\\
&{\rm subject}\,\,{\rm to}\quad \bar w \in C_{\bar w} ,\quad (\bar
\theta ,\bar v,\bar\mu) \in C_{\bar\theta}\times C_{\bar v} \times
C_{\bar\mu}.
\end{align*}
For notational convenience, introduce the notation
\begin{align*}
&\bar x: = \begin{bmatrix}
   \bar\theta\\
   \bar v\\
   \bar\mu \\
\end{bmatrix},\quad \bar x^*: = \begin{bmatrix}
   \bar \theta^* \\
   \bar v^*  \\
   \bar \mu^*  \\
\end{bmatrix},\\
&{\mathcal W}:= C_{\bar w},\quad {\mathcal X}:=C_{\bar\theta}\times
C_{\bar v}\times C_{\bar \mu }.
\end{align*}

Then, the saddle-point problem is $\min_{\bar x\in {\mathcal X}} \max_{\bar w \in {\mathcal W}}
{\mathcal L}(\bar x,\bar w)$. If the gradients of the Lagrangian are available, then the deterministic primal-dual
algorithm~\cite{nedic2009subgradient} can be used as follows:
\begin{align}
&\bar x_{k + 1}=\Gamma_{\mathcal X} (\bar x_k -\alpha_k {\mathcal
L}_{\bar x} (\bar x_k ,\bar w_k) ),\label{eq:primal-dual-form1}\\
&\bar w_{k + 1}=\Gamma_{\mathcal W} (\bar w_k+\alpha_k{\mathcal L}_{\bar
w}(\bar x_k,\bar w_k)).\label{eq:primal-dual-form2}
\end{align}

In this paper, our problem allows only stochastic gradient estimates of the Lagrangian function: the exact gradients are not available, while only their unbiased stochastic estimations are given. In this case, the stochastic primal-dual algorithm~\cite{nemirovski2009robust} introduced in~\cref{sec:primal-dual-algorithm} can find a solution under certain conditions
\begin{align*}
&\bar x_{k+1}=\Gamma_{\mathcal X} (\bar x_k-\alpha_k ({\mathcal L}_x (\bar x_k,\bar w_k)+
\varepsilon_k)),\\
&\bar w_{k+1}=\Gamma_{\mathcal W} (\bar w_k+\alpha_k ({\mathcal L}_w
(\bar x_k,\bar w_k)+\xi_k)),
\end{align*}
where $\epsilon_k$ and $\xi_k$ are i.i.d. random variables with zero mean. In our case, stochastic estimates of the Lagrangian function~\eqref{eq:Lagrangian2} can be obtained by using samples of the state, action, and reward. The overall algorithm is given in~\cref{algo:DGTD}. In~\cref{algo:line:sampling}, each agent samples the state, action, and the corresponding local reward,~\cref{algo:line:primal-update} updates the primal variable according to the stochastic gradient descent step, and~\cref{algo:line:dual-update} updates the dual variable by the stochastic gradient ascent step. \cref{algo:line:projection} projects the variables to the corresponding compact sets $C_{\bar \theta },C_{\bar v},C_{\bar \mu },C_{\bar w}$, and~\cref{algo:line:output} outputs averaged iterates over the whole iteration steps instead of the final iterates. Note that the averaged dual
variables can be computed recursively~\cite[pp.~181]{bertsekas1996neuro}.
\begin{algorithm}[h]
\caption{Distributed GTD algorithm}
\begin{algorithmic}[1]

\State Set $\kappa \geq 0$ and the step-size sequence $\{\alpha_k \}_{k=0}^\infty$.

\For{agent $i\in \{1,\ldots,N\}$}

\State Initialize $(\theta _0^{(i)} ,v_0^{(i)} ,\mu
_0^{(i)},w_0^{(i)} )$.

\EndFor

\For{$k \in \{0,\ldots,T-1\}$}

\For{agent $i\in \{1,\ldots,N\}$}\label{algo:line:sampling}

\State Sample $(s,a,s')$ with $s \sim d,a \sim \pi_i(\cdot|s),s'
\sim P(s,a,\cdot)$, $\hat r_i := \hat r_i (s,a,s')$.

\State Update primal variables according to\label{algo:line:primal-update}
\begin{align*}
\theta_{k+1/2}^{(i)}=&\theta_{k}^{(i)}-
\alpha_k [\phi\phi^T \theta_{k}^{(i)}+\phi\phi^T w_{k}^{(i)}\\
&-\gamma\phi(\phi')^T w_{k}^{(i)}-\phi \hat r_i]\\
v_{k+1/2}^{(i)} =&v_{k}^{(i)}- \alpha_k\left[v_{k}^{(i)} \right.\\
& \left.- \left(
|{\mathcal N}_i(k)|w_{k}^{(i)} -
\sum_{j\in {\mathcal N}_i(k)}w_{k}^{(j)} \right) \right],
\end{align*}
where ${\mathcal N}_i(k)$ is the neighborhood of node $i$ on the graph
${\mathcal G}(k)$, $\phi:=\phi(s),\phi':=\phi(s')$.

\State Update dual variables according to\label{algo:line:dual-update}
\begin{align*}
\mu_{k+1/2}^{(i)}=& \mu_{k}^{(i)}+\alpha_k \left( |{\mathcal N}_i(k)
|w_{k}^{(i)}-\sum_{j\in {\mathcal
N}_i(k)}w_{k}^{(j)}\right),\\
w_{k+1/2}^{(i)}=& w_{k}^{(i)} - \alpha_k \left( |{\mathcal N}_i(k)
|v_{k}^{(i)}-\sum_{j \in {\mathcal N}_i(k)}v_{k}^{(j)}\right)\\
& - \alpha_k \left( |{\mathcal N}_i(k)|\mu_{k}^{(i)}-\sum_{j\in {\mathcal
N}_i (k)}\mu_{k}^{(j)} \right)\\
&+\alpha_k(\phi\phi^T \theta_{k}^{(i)}-\gamma\phi'\phi^T \theta_{k}^{(i)})\\
&-\alpha_k \kappa \left( |{\mathcal N}_i(k)|w_{k}^{(i)}-\sum_{j\in {\mathcal
N}_i (k)}w_{k}^{(j)} \right).
\end{align*}

\State Project parameters:\label{algo:line:projection}
\begin{align*}
&\theta_{k+1}^{(i)}=\Gamma_{C_{\bar\theta}} (\theta_{k +
1/2}^{(i)}),\quad v_{k+1}^{(i)}=\Gamma_{C_{\bar v}}
(v_{k + 1/2}^{(i)}),\\
&\mu_{k + 1}^{(i)}= \Gamma_{C_{\bar\mu}} (\mu_{k+1/2}^{(i)}
),\quad w_{k + 1}^{(i)}=\Gamma_{C_{\bar w}} (w_{k+1/2}^{(i)}).
\end{align*}

\EndFor

\EndFor \State {\bf Output} The averaged $\hat
w_T^{(i)}  = \frac{1}{T}\sum_{k=0}^{T} {w_k^{(i)}},i \in {\mathcal
V}$, and last, $w_T^{(i)},i \in {\mathcal
V}$, dual iterates.\label{algo:line:output}

\end{algorithmic}
\label{algo:DGTD}
\end{algorithm}

The next proposition states that the averaged dual variable
converges to the set of saddle-points in terms of the $\varepsilon$-saddle set with a vanishing $\varepsilon$.
\begin{proposition}[Finite-time convergence~I]\label{prop:convergence-DGTD}
Consider~\cref{algo:DGTD}, assume that the step-size sequence, $(\alpha_k)_{k=0}^\infty$,
satisfies $\alpha_k =\alpha_0 /\sqrt{k+1}$ for some $\alpha_0 >0$, and let
\begin{align*}
&\bar x_k := \begin{bmatrix}
   \bar \theta_k\\
   \bar v_k\\
   \bar \mu_k\\
\end{bmatrix},\quad \bar \theta _k := \begin{bmatrix}
   \theta _k^{(1)}\\
    \vdots   \\
   \theta_k^{(N)} \\
\end{bmatrix},\quad \bar v_k := \begin{bmatrix}
   v_k^{(1)}\\
    \vdots   \\
   v_k^{(N)}\\
\end{bmatrix},\\
&\bar \mu_k := \begin{bmatrix}
   \mu_k^{(1)}\\
    \vdots   \\
   \mu_k^{(N)}\\
\end{bmatrix},\quad \bar w_k := \begin{bmatrix}
   w_k^{(1)}\\
    \vdots \\
   w_k^{(N)} \\
\end{bmatrix},
\end{align*}
and $\hat x_T = \frac{1}{T}\sum_{k=0}^{T-1}{\bar x_k }$ and $\hat
w_T = \frac{1}{T}\sum_{k=0}^{T-1} {\bar w_k }$ be the averaged
dual iterates generated by~\cref{algo:DGTD} with $T \ge 1$. Then, for any $\varepsilon > 0,\delta \in (0,1)$, Then, for any
$\varepsilon> 0,\delta>0$, if $T \ge\max \{\Omega_1,\Omega_2\}=:\omega(\varepsilon,\delta)$, then
\begin{align*}
&{\mathbb P}[(\hat x_T,\hat w_T ) \in {\mathcal H}_\varepsilon] \ge 1 - \delta,
\end{align*}
where
\begin{align*}
\Omega_1:=&\frac{8C^2((\alpha_0+2)^2 C^2+(\alpha_0+4)\varepsilon/6)}{\varepsilon ^2} \ln\left(\frac{1}{\delta}\right),\\
\Omega_2:=&\frac{4C^4 (2\alpha_0^{-1}+\alpha_0)^2}{\varepsilon^2}.
\end{align*}
\end{proposition}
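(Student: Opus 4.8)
The plan is to recognize \cref{algo:DGTD} as a concrete instance of the stochastic primal--dual scheme \eqref{eq:promai-dual-subgrad1}--\eqref{eq:promai-dual-subgrad2} applied to the convex--concave Lagrangian $\mathcal L(\bar x,\bar w)$ of \eqref{eq:Lagrangian2}, with primal variable $\bar x=(\bar\theta,\bar v,\bar\mu)\in{\mathcal X}=C_{\bar\theta}\times C_{\bar v}\times C_{\bar\mu}$ and dual variable $\bar w\in{\mathcal W}=C_{\bar w}$, and then to invoke \cref{prop:convergence1} verbatim. The work therefore reduces to three tasks: (i) matching the per-agent recursions in \cref{algo:DGTD} with the stacked (sub)gradients of $\mathcal L$ plus i.i.d.\ zero-mean noise; (ii) verifying the uniform bounds \eqref{eq:1}--\eqref{eq:3} with an explicit constant $C$; and (iii) checking the convex--concave structure that makes ${\mathcal H}_\varepsilon$ (in the sense of \cref{def:e-saddle-point}) the right target set.

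For (i), I would compute $\nabla_{\bar\theta}\mathcal L$, $\nabla_{\bar v}\mathcal L$, $\nabla_{\bar\mu}\mathcal L$, $\nabla_{\bar w}\mathcal L$ from \eqref{eq:Lagrangian2} blockwise and show that each of these stacked gradients equals, in conditional expectation, the corresponding agent's update direction in \cref{algo:line:primal-update} and \cref{algo:line:dual-update}. The key sampling identities, for a fresh draw $(s,a,s')$ with $s\sim d$, $a\sim\pi_i(\cdot|s)$, $s'\sim P(s,a,\cdot)$, are $\mathbb E[\phi\phi^T]=\Phi^T D\Phi$, $\mathbb E[\gamma\phi(\phi')^T]=\gamma\Phi^T D P^\pi\Phi$, and $\mathbb E[\phi\hat r_i]=\Phi^T D r_i^\pi$ (the last after taking expectations over the action and next state), so that the $\bar\theta$- and $\bar w$-blocks built from $\bar B=\bar\Phi^T\bar D(I_{N|{\mathcal S}|}-\gamma\bar P^\pi)\bar\Phi$ are reproduced in expectation; and, using the i.i.d.\ random graph with $\mathbb E[L(k)]={\bf L}$, the operator $w^{(i)}\mapsto |{\mathcal N}_i(k)|w^{(i)}-\sum_{j\in{\mathcal N}_i(k)}w^{(j)}$ stacks to $L(k)\otimes I_{|{\mathcal S}|}$, whose expectation is $\bar{\bf L}={\bf L}\otimes I_{|{\mathcal S}|}$. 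Consequently the deviations $\varepsilon_k$ (primal noise) and $\xi_k$ (dual noise) between the sampled directions and these expected gradients are i.i.d.\ across $k$, have zero mean, and are independent of ${\mathcal F}_k$ because both the transition sample and the communication graph at step $k$ are drawn afresh.

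For (ii), since \cref{algo:line:projection} projects every iterate onto the compact cubes $C_{\bar\theta},C_{\bar v},C_{\bar\mu},C_{\bar w}$, the iterates $\bar x_k,\bar w_k$ stay in bounded sets; combined with boundedness of the features $\phi_\ell(s)$ on the finite state space, $\hat r_i\in[0,\sigma]$, and node degrees at most $N-1$, the stochastic gradients $\mathcal L_x(\bar x_k,\bar w_k)+\varepsilon_k$ and $\mathcal L_w(\bar x_k,\bar w_k)+\xi_k$ are uniformly bounded, and $\mathrm{diam}({\mathcal X})$, $\mathrm{diam}({\mathcal W})$ are finite; taking $C$ to be the maximum of all these constants yields \eqref{eq:1}--\eqref{eq:3}. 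For (iii), $\mathcal L$ in \eqref{eq:Lagrangian2} is convex in $\bar x$ (quadratic in $\bar\theta$ with PSD Hessian $\bar\Phi^T\bar D\bar\Phi$, quadratic with Hessian $I$ in $\bar v$, affine in $\bar\mu$) and concave in $\bar w$ (affine terms plus the concave term $-(\kappa/2)\bar w^T\bar{\bf L}\bar w$ since $\bar{\bf L}\succeq 0$), so \cref{def:saddle-point} applies, a saddle point exists and lies in ${\mathcal X}\times{\mathcal W}$ by \cref{assumption:constrain-set} and \cref{lemma:bound-lemma3}. Feeding all of this into \cref{prop:convergence1} with $x\equiv\bar x$, $w\equiv\bar w$, the step-size rule $\alpha_k=\alpha_0/\sqrt{k+1}$, and the constant $C$ above gives $\mathbb P[(\hat x_T,\hat w_T)\in{\mathcal H}_\varepsilon]\ge 1-\delta$ whenever $T\ge\max\{\Omega_1,\Omega_2\}$, with $\Omega_1,\Omega_2$ exactly as displayed.

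The hard part will be task (i): carrying out the blockwise gradient computation of \eqref{eq:Lagrangian2} and reconciling the Kronecker-product notation ($\bar{\bf L}={\bf L}\otimes I_{|{\mathcal S}|}$, $\bar B$, $\bar\Phi$, $\bar D$) with the coordinatewise per-agent recursions, and then proving that every sampled term is an unbiased estimate of the corresponding expected-gradient entry --- in particular handling the reward expectation $\mathbb E[\phi\hat r_i]=\Phi^T D r_i^\pi$ over the action and next state, and confirming the i.i.d.-across-$k$ and conditionally-zero-mean properties of the combined sampling-and-communication noise so that \cref{prop:convergence1} is literally applicable. The remaining bookkeeping (boundedness, convexity--concavity, and the minor off-by-one in the averaging index) is routine.
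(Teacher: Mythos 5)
Your proposal is correct and follows the same route as the paper: recognize \cref{algo:DGTD} as an instance of the stochastic primal--dual iteration for the convex--concave Lagrangian \eqref{eq:Lagrangian2}, verify the boundedness conditions \eqref{eq:1}--\eqref{eq:3} via the projections onto compact sets and the bounded rewards/features, and invoke \cref{prop:convergence1}. In fact your write-up is considerably more detailed than the paper's two-line proof, which only notes the boundedness of the stochastic gradients and cites \cref{prop:convergence1}; your explicit verification of the unbiasedness identities and the conditionally-zero-mean martingale structure of the sampling-plus-communication noise fills in steps the paper leaves implicit.
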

\begin{proof}
Since the reward is bounded by $\sigma$, the stochastic estimates of the gradient are
bounded, and the inequalities $\| {\mathcal L}_x(x_k,w_k)+\varepsilon_k
\|_2 \le C$ and $\|{\mathcal L}_w(x_k,w_k)+\xi_k \|_2 \le C$ are satisfied from some constant $C >0$. Then, the is proved by using~\cref{prop:convergence1}.
\end{proof}

\cref{prop:convergence-DGTD} provides a convergence of the iterates of~\cref{algo:DGTD} to the $\varepsilon$-saddle set, ${\mathcal H}_\varepsilon$, with ${\mathcal O}(1/\varepsilon^2)$ samples (or ${\mathcal O}(1/\sqrt{T})$ rate). For the specific ${\mathcal L}$ for our problem, we can obtain stronger convergence results with convergence rates.
\begin{proposition}[Finite-time convergence~II]\label{prop:convergence-DGTD2}
Consider~\cref{algo:DGTD} and the assumptions in~\cref{prop:convergence-DGTD}.  Fix any $\varepsilon > 0$ and $\delta \in (0,1)$. If $T \ge \omega ((\kappa /2)\varepsilon ,\delta)$, then
\begin{align}
&{\mathbb P}[\bar w_T^T \bar L\bar w_T \le \varepsilon]\ge 1 - \delta,\label{eq:5}
\end{align}
where the function $\omega: {\mathbb R} \times {\mathbb R} \to {\mathbb R}$ is defined in~\cref{prop:convergence-DGTD}.

Moreover, if
\begin{align*}
T \ge\omega \left(\frac{\min \{\lambda_{\min} (\bar \Phi ^T \bar D\bar \Phi \bar \Phi ^T \bar D\bar \Phi ),1\}}{2\sqrt {\lambda_{\max} (\bar\Phi^T \bar D\bar\Phi\bar\Phi^T \bar D\bar\Phi+I)}}\varepsilon,\delta \right),
\end{align*}
then
\begin{align}
&{\mathbb P}[\|\bar\theta_T -\bar\theta^*\|_2^2+\|\bar v_T\|_2^2\le \varepsilon]\ge 1-\delta.\label{eq:6}
\end{align}
\end{proposition}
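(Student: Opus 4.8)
The plan is to derive both inequalities from \cref{prop:convergence-DGTD} by unpacking what membership of the averaged iterate in the $\varepsilon$-saddle set ${\mathcal H}_\varepsilon$ means for the specific ${\mathcal L}$ of~\eqref{eq:Lagrangian2}. Throughout, $\hat x_T=(\hat\theta_T,\hat v_T,\hat\mu_T)$ and $\hat w_T$ denote the averaged iterates of \cref{algo:DGTD} (what the statement abbreviates $\bar\theta_T,\bar v_T,\bar w_T$), and I fix the saddle point $(\bar\theta^*,\bar v^*,\bar\mu^*,\bar w^*)$ from \cref{corollary:saddle-point} with $\bar\mu^*\in\tilde{\mathcal F}^*\cap C_{\bar\mu}$, which is nonempty by \cref{assumption:constrain-set}. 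The common first step is: apply \cref{prop:convergence-DGTD} with accuracy $\varepsilon'$ to get $(\hat x_T,\hat w_T)\in{\mathcal H}_{\varepsilon'}$ with probability at least $1-\delta$ once $T\ge\omega(\varepsilon',\delta)$, evaluate the defining inequality of ${\mathcal H}_{\varepsilon'}$ at $(\bar x,\bar w)=(\bar x^*,\bar w^*)$ (legitimate since $\hat x_T\in{\mathcal X}$ by convexity/projection and $\bar x^*\in{\mathcal X}$, $\bar w^*\in{\mathcal W}$ by \cref{assumption:constrain-set}), and combine with the saddle-point inequality~\eqref{eq:saddle2}, which gives ${\mathcal L}(\bar x^*,\hat w_T)\le{\mathcal L}^*\le{\mathcal L}(\hat x_T,\bar w^*)$ with ${\mathcal L}^*:={\mathcal L}(\bar x^*,\bar w^*)$. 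Together these yield
\[
0\le{\mathcal L}^*-{\mathcal L}(\bar x^*,\hat w_T)\le{\mathcal L}(\hat x_T,\bar w^*)-{\mathcal L}(\bar x^*,\hat w_T)\le\varepsilon',\qquad 0\le{\mathcal L}(\hat x_T,\bar w^*)-{\mathcal L}^*\le\varepsilon'.
\]

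Next I would simplify ${\mathcal L}$ using \cref{prop:stationary-points}: since $\bar{\bf L}$ is symmetric with $\bar v^*=0$, $\bar{\bf L}\bar w^*=0$ and $\bar B^{T}\bar\theta^*=\bar{\bf L}\bar\mu^*$ (the last because $\bar\mu^*\in{\mathcal F}^*$ and $\bar B^{T}=\bar\Phi^{T}(I-\gamma\bar P^{\pi})^{T}\bar D\bar\Phi$), the coupling terms cancel and
\[
{\mathcal L}(\bar x^*,\bar w)=\psi(\bar\theta^*,0,\bar\mu^*)-\tfrac{\kappa}{2}\,\bar w^{T}\bar{\bf L}\bar w,\qquad {\mathcal L}^*=\psi(\bar\theta^*,0,\bar\mu^*),
\]
and, since $\psi$ does not depend on $\bar\mu$ and $\bar{\bf L}\bar w^*=0$,
\[
{\mathcal L}(\bar\theta,\bar v,\bar\mu,\bar w^*)=\tfrac12\bar\theta^{T}(\bar\Phi^{T}\bar D\bar\Phi)\bar\theta-\bar\theta^{T}\bar\Phi^{T}\bar D\bar r^{\pi}+\bar\theta^{T}\bar B\bar w^*+\tfrac12\|\bar v\|_2^2,
\]
a convex quadratic in $(\bar\theta,\bar v)$, independent of $\bar\mu$, whose unconstrained minimizer is $(\bar\theta^*,0)$: its $(\bar\theta,\bar v)$-gradient vanishes there by the KKT relations~\eqref{eq:KKT-points}, and by \cref{assumption:constrain-set} that point lies in the interior of ${\mathcal X}$, so the constrained and unconstrained minimizers agree and the Taylor expansion about $(\bar\theta^*,0)$ is exact.

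For~\eqref{eq:5}: the first identity gives ${\mathcal L}^*-{\mathcal L}(\bar x^*,\hat w_T)=\tfrac{\kappa}{2}\hat w_T^{T}\bar{\bf L}\hat w_T\ge0$; plugging this into the left chain above with $\varepsilon'=\tfrac{\kappa}{2}\varepsilon$ gives $\hat w_T^{T}\bar{\bf L}\hat w_T\le\varepsilon$, which holds with probability at least $1-\delta$ once $T\ge\omega(\tfrac{\kappa}{2}\varepsilon,\delta)$. For~\eqref{eq:6}: by the exactness of the quadratic expansion, ${\mathcal L}(\hat x_T,\bar w^*)-{\mathcal L}^*=\tfrac12\Delta^{T}M\Delta$ with $\Delta:=(\hat\theta_T-\bar\theta^*,\hat v_T)$ and $M:=\mathrm{diag}(\bar\Phi^{T}\bar D\bar\Phi,I)$. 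Using $\tfrac12\Delta^{T}M\Delta\ge\tfrac{1}{2\|M\|_2}\|M\Delta\|_2^2$, the estimate $\|M\|_2\le\sqrt{\lambda_{\max}(\bar\Phi^{T}\bar D\bar\Phi\bar\Phi^{T}\bar D\bar\Phi+I)}$, and $\|M\Delta\|_2^2\ge\min\{\lambda_{\min}(\bar\Phi^{T}\bar D\bar\Phi\bar\Phi^{T}\bar D\bar\Phi),1\}\,\|\Delta\|_2^2$, one obtains ${\mathcal L}(\hat x_T,\bar w^*)-{\mathcal L}^*\ge c\,(\|\hat\theta_T-\bar\theta^*\|_2^2+\|\hat v_T\|_2^2)$ with $c$ equal to the constant in the statement; taking $\varepsilon'=c\varepsilon$ and invoking the first step finishes the argument.

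The algebraic cancellations in ${\mathcal L}$ and the matrix-norm estimates are routine; the step I expect to need the most care is showing the $(\bar\theta,\bar v)$-error is sandwiched by ${\mathcal L}(\hat x_T,\bar w^*)-{\mathcal L}^*$ with precisely the advertised constant — concretely, that $(\bar\theta^*,0)$ is the interior minimizer of ${\mathcal L}(\cdot,\cdot,\cdot,\bar w^*)$ on ${\mathcal X}$ (so the expansion is tight with no remainder) and that the $\bar\mu$-component genuinely drops out of this gap. A minor point worth flagging is that~\eqref{eq:5} is vacuous unless $\kappa>0$, and that both parts use \cref{assumption:constrain-set} to keep the chosen saddle point inside the projection boxes $C_{\bar\theta},C_{\bar v},C_{\bar\mu},C_{\bar w}$.
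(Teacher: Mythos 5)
Your proposal is correct and follows essentially the same route as the paper: both parts reduce to the $\varepsilon$-saddle inequality evaluated at the saddle point combined with~\eqref{eq:saddle2}, the first via the cancellation ${\mathcal L}(\bar x^*,\bar w)={\mathcal L}^*-\tfrac{\kappa}{2}\bar w^T\bar{\bf L}\bar w$, the second via a quadratic lower bound on $f(\cdot)={\mathcal L}(\cdot,\cdot,\cdot,\bar w^*)$ around its minimizer $(\bar\theta^*,0,\cdot)$ with the same constants. The only (equivalent) difference is that you exploit the exact quadratic expansion $f-f^*=\tfrac12\Delta^TM\Delta$ directly, whereas the paper routes through the Lipschitz-gradient inequality $f-f^*\ge\tfrac{1}{2L}\|\nabla f\|^2$; your explicit handling of the $\bar\mu$-independence, the interiority of the minimizer, and the $\kappa>0$ caveat are points the paper leaves implicit.
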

\begin{proof}
The proof is based on~\cref{prop:convergence-DGTD}, the strong convexity of ${\mathcal L}$ in some arguments, and the Lipschitz continuity of the gradient of ${\mathcal L}$. In particular, by~\cref{prop:convergence-DGTD}, if $T \ge \omega (\varepsilon ,\delta)$, then with probability $1-\delta$, $(\hat x_T,\hat w_T)\in {\mathcal H}_\varepsilon$, meaning that
\begin{align}
&{\mathcal L}(\bar\theta_T,\bar v_T,\bar \mu_T,\bar w) - {\mathcal L}(\bar\theta,\bar v,\bar\mu,\bar w_T)\le \varepsilon.\label{eq:4}
\end{align}
holds for all $\bar w \in {\mathcal W},(\bar \theta ,\bar v,\bar \mu ) \in {\mathcal X}$. Setting $\bar w = \bar w^* ,(\bar\theta,\bar v,\bar \mu ) = (\bar \theta ^* ,\bar v^* ,\bar \mu ^* )$ in~\eqref{eq:4} and using the definition of the saddle-point, we have
$\varepsilon \ge {\mathcal L}(\bar\theta_T,\bar v_T ,\bar\mu_T,\bar w^*)- {\mathcal L}(\bar\theta^*,\bar v^*,\bar\mu^* ,\bar w_T)\ge {\mathcal L}(\bar\theta^*,\bar v^* ,\bar\mu^*,\bar w^* ) - {\mathcal L}(\bar\theta^*,\bar v^* ,\bar\mu ^*,\bar w_T)= \frac{\kappa }{2}\bar w_T^T \bar L\bar w_T$, where the second inequality is due to~\cref{def:saddle-point} and the first equality follows by using the definition~\eqref{eq:Lagrangian2} and the KKT condition~\eqref{eq:KKT-points}. Replacing $\varepsilon$ with $(\kappa/2)\varepsilon $ yields the first result. Moreover, setting $\bar w=\bar w^*,(\bar\theta,\bar v,\bar\mu)=(\bar\theta^*,\bar v^*,\bar\mu^* )$ in~\eqref{eq:4} and using the definition of the saddle-point, we have
$\varepsilon \ge {\mathcal L}(\bar\theta_k ,\bar v_k ,\bar\mu_k,\bar w^*)-{\mathcal L}(\bar\theta^*,\bar v^*,\bar\mu^*,\bar w_k) \ge {\mathcal L}(\bar\theta_k,\bar v_k,\bar\mu_k,\bar w^*)-{\mathcal L}(\bar\theta^*,\bar v^*,\bar\mu^*,\bar w^*)= f(\bar\theta_k,\bar v_k,\bar\mu_k ) - f(\bar\theta^*,\bar v^*,\bar\mu^*)$, where $f(\cdot,\cdot,\cdot ) = {\mathcal L}( \cdot , \cdot , \cdot ,\bar w^* )$. It is easily prove that $f$ has a Lipschitz gradient with parameter $\sqrt {\lambda_{\max} (\bar\Phi^T \bar D\bar \Phi \bar \Phi^T \bar D\bar \Phi+I)}$, i.e., .
\begin{align*}
&\|\nabla f(\bar\theta,\bar v,\bar\mu)-\nabla f(\bar\theta',\bar v',\bar\mu')\|_2\\
&\le \sqrt{\lambda_{\max}(\bar\Phi^T \bar D\bar \Phi\bar\Phi^T \bar D\bar \Phi+I)} \left\| \begin{bmatrix}
   \bar\theta -\bar \theta'\\
   \bar v - \bar v'\\
   \bar\mu  - \bar \mu'\\
\end{bmatrix} \right\|_2
\end{align*}

Therefore, using~\cite[Prop.~6.1.9]{bertsekas2015convex} and using the fact that $(\bar\theta^*,\bar v^*,\bar\mu^*)$ is a minimizer of $f$, one concludes $\frac{1}{2\sqrt{\lambda_{\max}(\bar\Phi^T \bar D\bar \Phi \bar \Phi ^T \bar D\bar \Phi  + I)}}\|\nabla {\mathcal L}(\bar \theta _k ,\bar v_k ,\bar \mu _k ,\bar w^*\|_2^2\le \varepsilon$. After algebraic manipulations with~\eqref{eq:KKT-points}, we obtain $\frac{\min \{\lambda_{\min}(\bar\Phi^T\bar D\bar \Phi \bar\Phi ^T \bar D\bar\Phi ),1\}}{2\sqrt {\lambda_{\max} (\bar \Phi ^T \bar D\bar\Phi\bar \Phi ^T \bar D\bar \Phi  + I)}}(\| \bar \theta  - \bar \theta^*\|_2^2  + \| \bar v\|_2^2 ) \le \varepsilon$. The second result is obtained by replacing $\varepsilon$ with $\frac{\min \{\lambda _{\min } (\bar \Phi ^T \bar D\bar \Phi \bar \Phi ^T \bar D\bar \Phi ),1\}}{2\sqrt{\lambda_{\max}(\bar \Phi ^T \bar D\bar \Phi \bar \Phi ^T \bar D\bar \Phi+I)}}\varepsilon$.
\end{proof}

The first result in~\eqref{eq:5} implies that the iterate, $\bar w_T$, reaches a consensus with at most ${\mathcal O}(1/\varepsilon^2)$ samples or at ${\mathcal O}(1/\sqrt{T})$ rate. Similarly,~\eqref{eq:6} implies that the squared norm of the errors of $\bar\theta_T$ and $\bar v_T$, $\|\bar\theta_T -\bar\theta^*\|_2^2+\|\bar v_T\|_2^2$, converges at ${\mathcal O}(1/\sqrt{T})$ rate. However,~\eqref{eq:5} does not suggest anything about the convergence rate of $\|\bar w_T -\bar w^*\|_2^2$ and $\|\bar \mu_T -\bar \mu^*\|_2^2$. Still, their asymptotic convergence is guaranteed by~\cref{prop:convergence-DGTD}. The main reason is the lack of the strong convexity with respect to these variables. However, we can resolve this issue with a slight modification of the algorithm by adding the regularization term $(\rho/2)\bar \mu^T \bar \mu -(\rho/2)\bar w^T \bar w$ to the Lagrangian ${\mathcal L}$ with a small $\rho > 0$ so that ${\mathcal L}(\bar\theta,\bar v,\bar\mu)$ is strongly convex in $\bar\theta$ and strongly concave in $\bar\mu$. In this case, the corresponding saddle-points are slightly altered depending on $\rho$.

\begin{remark}
\cref{prop:convergence-DGTD} and~\cref{prop:convergence-DGTD2} apply the analysis of the primal-dual algorithm in~\cref{prop:convergence1}, and exhibit ${\mathcal O}(1/\sqrt{T})$ convergence rate. The recent primal-dual algorithm in~\cite{ding2019fast} has faster ${\mathcal O}(1/T)$ rate, and can be applied to solve the saddle-point problem in~\eqref{eq:saddle}.
\end{remark}
\begin{remark}
The last line of~\cref{algo:DGTD} indicates that both the averaged iterates, $\hat
w_T^{(i)}=\frac{1}{T}\sum_{k=0}^{T} {w_k^{(i)}},i \in {\mathcal
V}$, and the last iterate, $w_T^{(i)},i \in {\mathcal
V}$, can be used for estimates of the solution. The result in~\cite{lee2018primal} proves the asymptotic convergence of the last iterate of~\cref{algo:DGTD} by using the stochastic approximation method~\cite{kushner2003stochastic}. For the convergence, the step-size rules should satisfy $\alpha_k >0,\alpha_k \to 0,\sum_{k=0}^\infty {\alpha_k} =
\infty,\sum_{k=0}^\infty
{\alpha_k^2}<\infty$, called the Robbins-Monro rule. An example is $\alpha_k=\alpha_0/(k+\beta)$ with $\alpha_0,\beta >0$. On the other hand,~\cref{prop:convergence-DGTD} and~\cref{prop:convergence-DGTD2} prove the convergence of the averaged iterate of~\cref{algo:DGTD} with convergence rates by using tools in optimization. The step-size rule is $\alpha_k=\alpha_0\sqrt{k+\beta}$ with $\alpha_0,\beta >0$, which does not obey the Robbins-Monro rule.
\end{remark}
\begin{remark}
There exist several RLs based on stochastic primal-dual approaches. The GTD can be interpreted as a stochastic primal-dual
algorithm by using Lagrangian duality theory~\cite{macua2015distributed}. The work in~\cite{chen2016stochastic} proposes
primal-dual reinforcement learning algorithm for the single-agent policy optimization problem, where a linear
programming form of the MDP problem is solved. A primal-dual
algorithm variant of the GTD is investigated in~\cite{mahadevan2014proximal} for a single-agent RL problem.
\end{remark}
\begin{remark}
When nonlinear function approximation is used, convergence to a global optimal solution is hardly guaranteed in general. In particular, for minimization problems, stochastic gradients converge to a local stationary point~\cite{ghadimi2013stochastic}. On the other hand, convergence of stochastic primal-dual algorithms to a saddle-point for general non-convex min-max problems is still an open problem~\cite{lin2018solving}. In this respect, the convergence of our algorithm with general nonlinear function approximation is a challenging open question, which needs significant efforts in the future.
\end{remark}

\section{Simulation}

In this section, we provide simulation studies that illustrate potential
applicability of the proposed approach.
\begin{example}
In this example, we provide a comparative analysis using simulations. We consider the Markov chain
\begin{align*}
&P^\pi =\begin{bmatrix}
   0.1 & 0.5 & 0.2 & 0.2 \\
   0.5 & 0.0 & 0.1 & 0.4 \\
   0.0 & 0.9 & 0.1 & 0.0 \\
   0.2 & 0.1 & 0.1 & 0.6  \\
\end{bmatrix},
\end{align*}
where $\pi$ is not explicitly specified, $|{\mathcal S}|=4$, $\gamma = 0.8$, feature vector $\phi (s) = \begin{bmatrix}
   \exp(-s^2)\\\exp(-(s-4)^2)\\
\end{bmatrix}$, local expected reward functions
\begin{align*}
&r_1^\pi=\begin{bmatrix}
   0 & 0 & 0 & 50\\
\end{bmatrix}^T,\quad r_2^\pi=\begin{bmatrix}
   0 & 0 & 0 & 0\\
\end{bmatrix}^T,\\
&r_3^\pi = \begin{bmatrix}
   0 & 0 & 0 & 0\\
\end{bmatrix}^T,\quad r_4^\pi= \begin{bmatrix}
   0 & 0 & 0 & 0\\
\end{bmatrix}^T,\\
&r_5^\pi= \begin{bmatrix}
   0 & 0 & 0 & 0\\
\end{bmatrix}^T,
\end{align*}
and the five RL agents over the network given in~\cref{fig:graph}.
\begin{figure}[h]
\centering\epsfig{figure=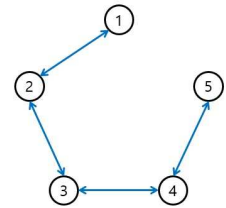,width=3cm} \caption{Network topology of five RL agents.}\label{fig:graph}
\end{figure}
\begin{figure}[h!]
\centering\epsfig{figure=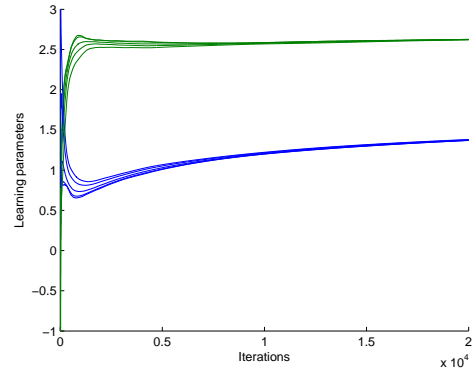,width=7cm}
\caption{Evolution of iterates of the proposed DGTD (solid lines with different colors for different parameters),~\cref{algo:DGTD}. We use the step-size rule $\alpha_k=10/\sqrt {k+100}$ and $\kappa = 1$.}\label{fig:comp1}
\end{figure}

\cref{fig:comp1} depicts evolutions of two parameter iterates of the proposed DGTD (different colors for different parameters),~\cref{algo:DGTD}. It shows that the parameters of five agents reach a consensus and converge to certain numbers. The results empirically demonstrate the proposed DGTD.
\end{example}

\begin{example}\label{ex:ex1}
Consider a $20[{\rm m}]\times 20[{\rm m}]$ continuous space $\mathcal
X$ with three robots (agent~1 (blue), agent~2 (red), and agent~3
(black)), which patrol the space with identical stochastic motion
planning policies $\pi_1 = \pi_2 = \pi_3 =\pi$. We consider a
single integrator system for each agent $i$: $\dot x_i(t) =
u_i(t)$ with the control policy $u_i(t) = -h(x_i(t) - r_i)$
employed from~\cite{panagou2014decentralized}, where $t \in
{\mathbb R}_+$ is the continuous time, $h \in {\mathbb R}_{++}$ is
a constant, $r_i$ is a randomly chosen point in $\mathcal X$ with
uniform distribution over $\mathcal X$. Under the control policy
$u_i(t)=-h(x_i(t)-r_i)$, $x_i(t)$ globally converges to $r_i$ as
$t \to \infty$~\cite[Lemma~1]{panagou2014decentralized}. When
$x_i$ is sufficiently close to the destination $r_i$, then it
chooses another destination $r_i$ uniformly in $\mathcal X$, and all
agents randomly maneuver the space $\mathcal X$. The continuous space
$\mathcal X$ is discretized into the $20\times 20$ grid world $\mathcal
S$. The collaborative objective of the three
robots is to identify the dangerous region using individually collected
reward (risk) information by each robot. The global value function
estimated by the proposed distributed GTD learning informs the
location of the points of interest. The three robots maneuver the
space and detect the dangers together. For instance, these regions
represent those with frequent turbulence in  commercial flight
routes or enemies in battle fields. Each robot is equipped with a
different sensor that can detect different regions, while a pair
of robots can exchange their parameters, when the distance between
them is less than or equal to $5$. We assume that robots do not
interfere with each other; thereby we can consider three
independent MDPs with identical transition models.
\begin{figure}[h!]
\centering\epsfig{figure=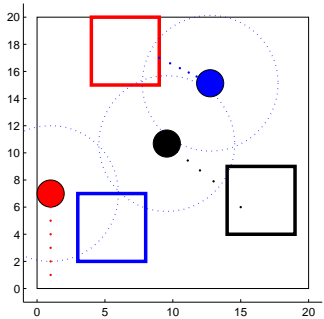,width=6.5cm} \caption{Three
dangerous regions that can be detected by three different
UAVs.}\label{fig:ex1a}
\end{figure}
The three regions and robots are depicted in~\cref{fig:ex1a}, where
the blue region is detected only by agent~1 (blue circle), the red
region is detected only by  agent~2 (red circle), and the black
region only by  agent~3 (black circle).

For each agent, the detection occurs only if the UAV flies over
the region, and a reward $\hat r = 100$ is given in this case.
In the  scenario above, the reward is
given,  when  turbulence is detected:
\cref{algo:DGTD} is applied with $\gamma = 0.5$ and
$\Phi=I_{|{\mathcal S}|}$ (tabular representation). We
run~\cref{algo:DGTD} with 50000 iterations, and the results are
shown in~\cref{fig:ex1b}. The results suggest that all agents
successfully estimate identical value functions, which are aware
of three regions despite of the incomplete sensing abilities and
communications.  The obtained value function can be
used to design a motion planning policy to travel safer routes.
\begin{figure}[h!]
\centering \subfigure{\epsfig{figure=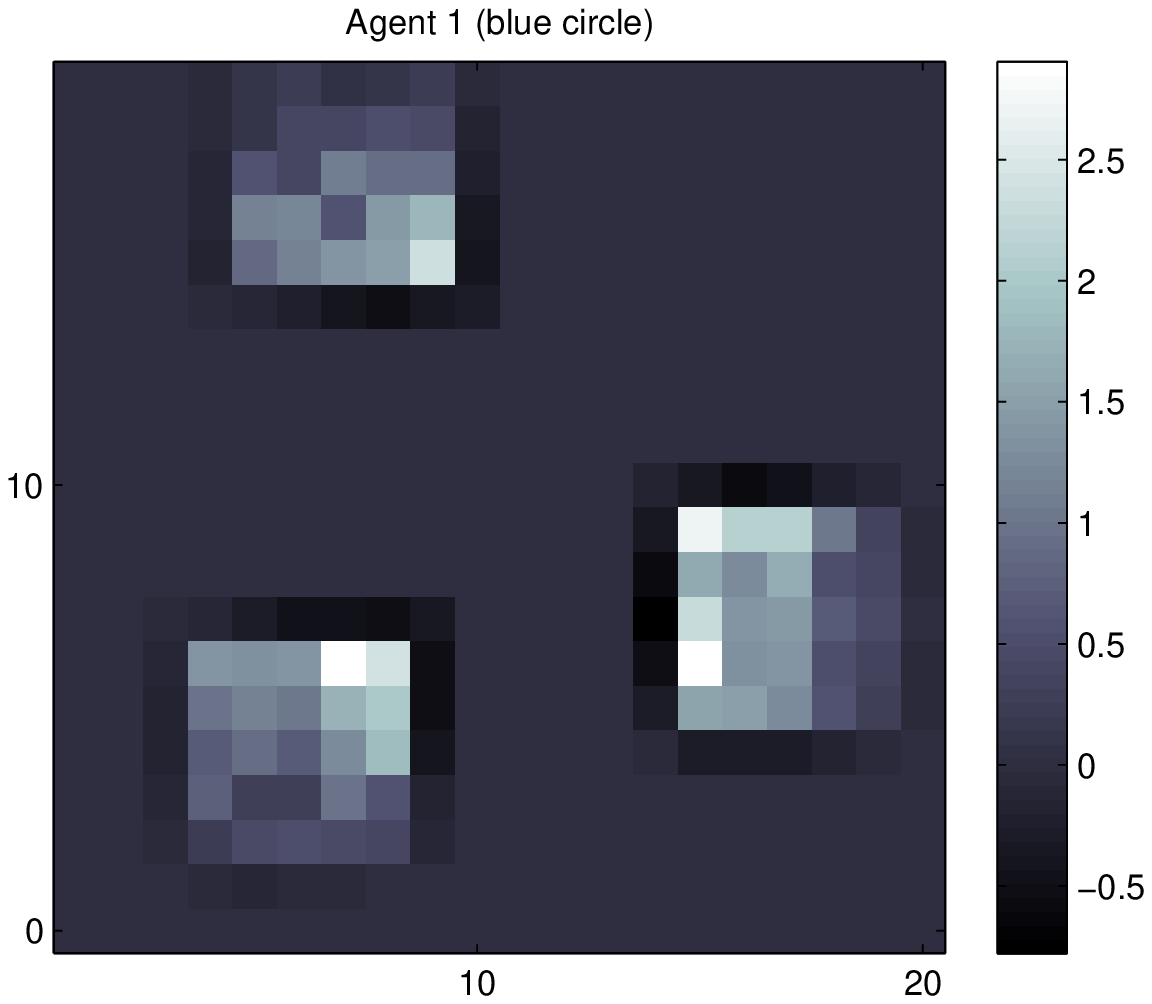,width=4cm}}
\subfigure{\epsfig{figure=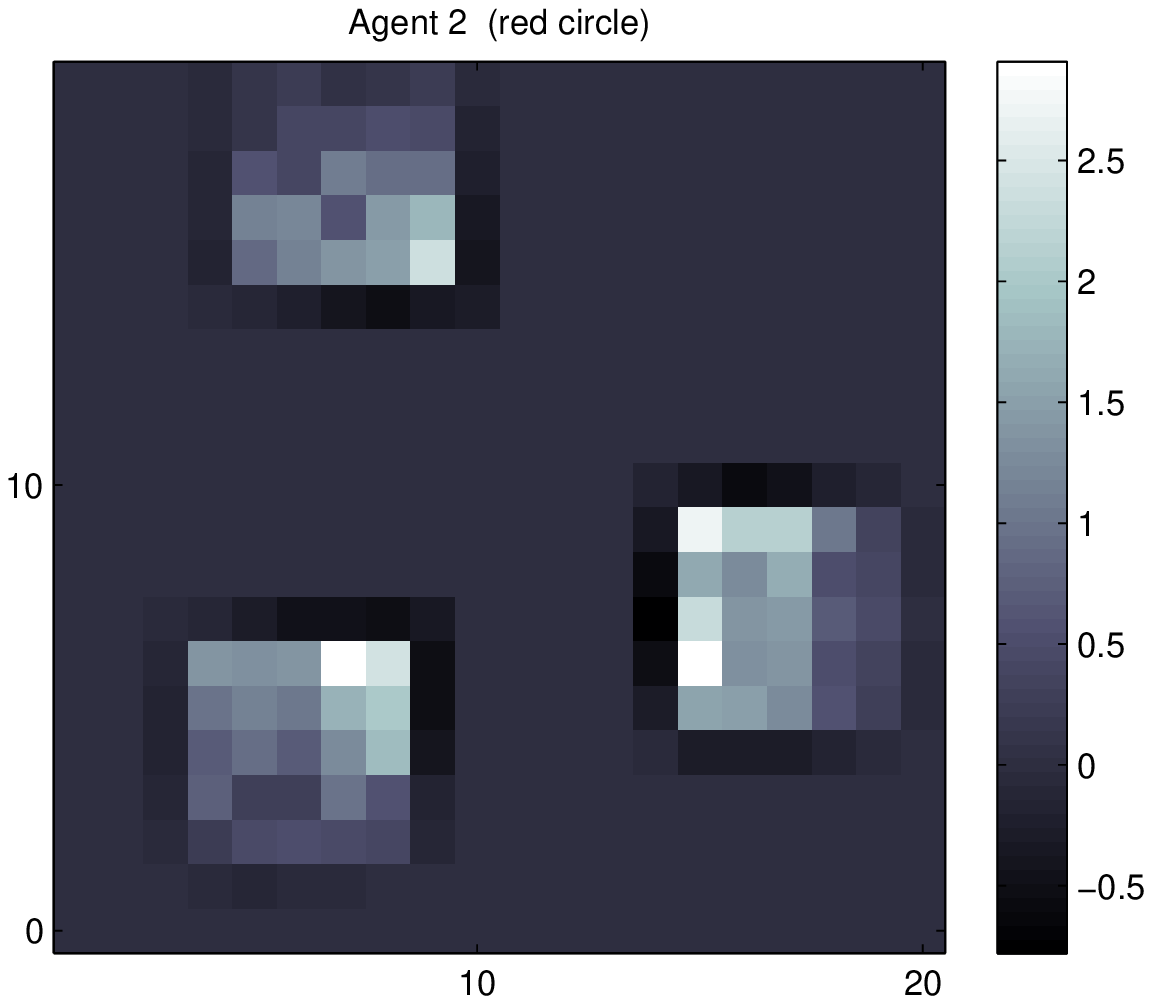,width=4cm}}
\subfigure{\epsfig{figure=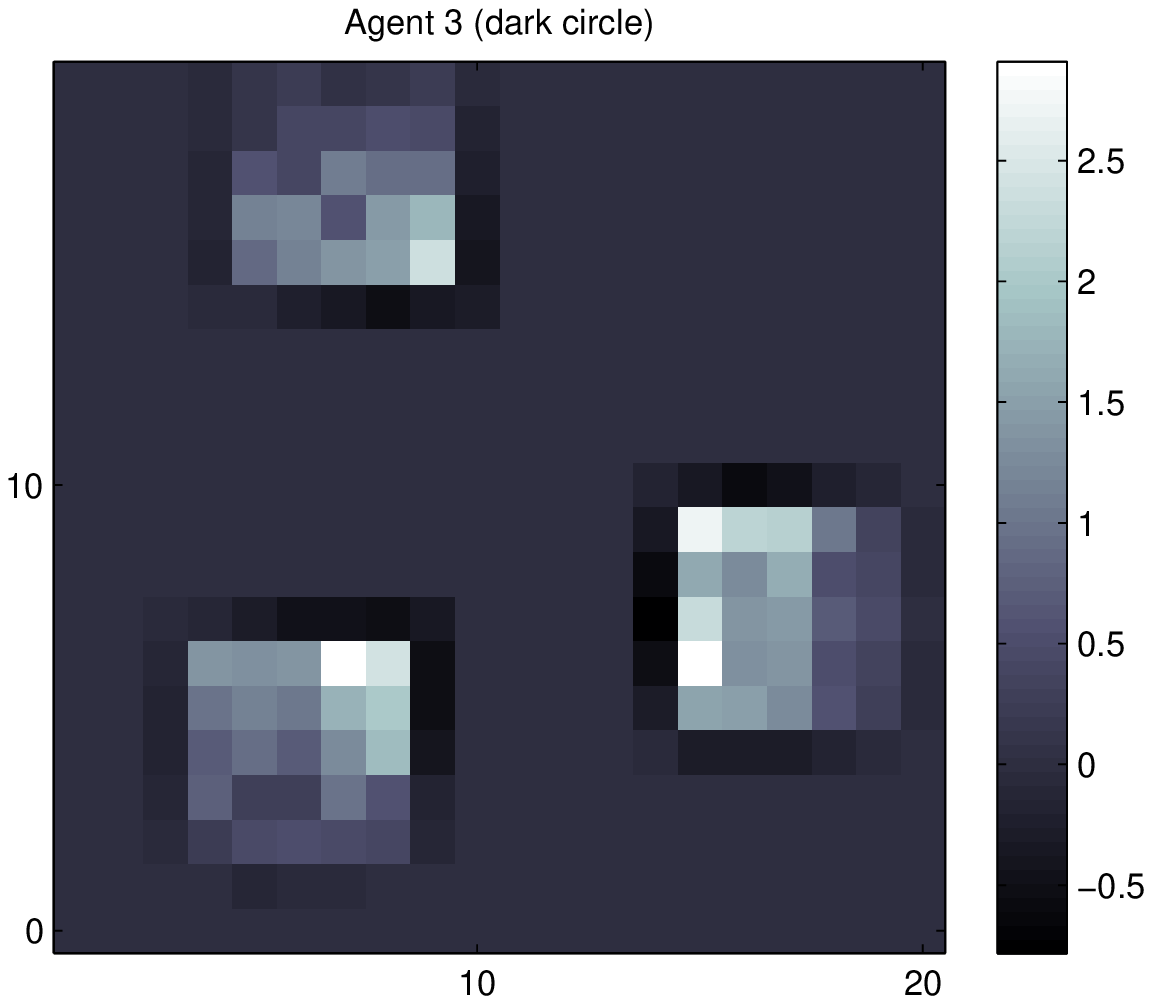,width=4cm}}
\caption{\cref{ex:ex1}. 2D plots of value functions of three
different agents.}\label{fig:ex1b}
\end{figure}
\end{example}

\section{Conclusion}
In this paper, we study a distributed GTD learning for
multi-agent MDPs using a stochastic primal-dual algorithm. Each agent receives local reward through a
local processing, while information exchange over random
communication networks allows them to learn the global value
function corresponding to a sum of local rewards. Possible future
research includes its extension to actor-critic and
Q-learning algorithms.

\section*{Acknowledgement}
D. Lee is thankful to N. Hovakimyan and H. Yoon for their fruitful comments on this paper.

\bibliographystyle{IEEEtran}
\bibliography{reference}

\appendix

\begin{center}
{\Large Appendix}
\end{center}

\section{Proof of~\cref{prop:convergence1}}
In this section, we will provide a proof of~\cref{prop:convergence1}. We begin with a basic technical lemma.
\begin{lemma}[Basic iterate relations~\cite{nedic2009subgradient}]\label{lemma:basic-iterate-relations1}
Let the sequences $(x_k,w_k )_{k=0}^\infty$ be generated by the
stochastic subgradient algorithm
in~\eqref{eq:promai-dual-subgrad1}
and~\eqref{eq:promai-dual-subgrad2}. Then, we have:
\begin{enumerate}
\item For any $x \in {\mathcal X}$ and for all $k \geq 0$,
\begin{align*}
&{\mathbb E}[\|x_{k+1}-x\|^2 |{\mathcal F}_k ]\\
\le& \|x_k  - x \|^2 + \alpha_k^2 {\mathbb E}[\| {\mathcal L}_x(x_k,w_k)+\varepsilon_k \|^2 |{\mathcal F}_k ]\\
&-2\alpha_k ({\mathcal L}(x_k ,w_k ) - {\mathcal L}(x,w_k )).
\end{align*}

\item For any $w \in {\mathcal W}$ and for all $k \geq 0$,
\begin{align*}
&{\mathbb E}[\| w_{k + 1}- w \|^2 |{\mathcal F}_k ]\\
\le& \| w_k-w \|^2 + \alpha_k^2 {\mathbb E}[\|{\mathcal L}_w(x_k,w_k ) + \xi
_k \|^2 |{\mathcal F}_k ]\\
&+ 2\alpha_k ({\mathcal L}(x_k,w_k ) - {\mathcal L}(x_k ,w)).
\end{align*}
\end{enumerate}
\end{lemma}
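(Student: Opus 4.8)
The plan is to prove both inequalities with the same two-ingredient recipe: nonexpansiveness of the Euclidean projection, followed by the first-order (sub)gradient inequality coming from convexity of $\mathcal{L}(\cdot,w_k)$ (for item~1) or concavity of $\mathcal{L}(x_k,\cdot)$ (for item~2), with the cross term involving the noise killed by conditioning on $\mathcal{F}_k$. I will carry out item~1 in detail; item~2 is identical after flipping signs and swapping convexity for concavity.

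First I would invoke $x\in\mathcal{X}$ and the fact that $\Gamma_{\mathcal X}$ is the projection onto a closed convex set, hence nonexpansive, so that $\|x_{k+1}-x\|^2=\|\Gamma_{\mathcal X}(x_k-\alpha_k({\mathcal L}_x(x_k,w_k)+\varepsilon_k))-\Gamma_{\mathcal X}(x)\|^2\le\|x_k-\alpha_k({\mathcal L}_x(x_k,w_k)+\varepsilon_k)-x\|^2$. Expanding the square on the right gives $\|x_k-x\|^2-2\alpha_k({\mathcal L}_x(x_k,w_k)+\varepsilon_k)^T(x_k-x)+\alpha_k^2\|{\mathcal L}_x(x_k,w_k)+\varepsilon_k\|^2$. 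Then I would take ${\mathbb E}[\,\cdot\,|{\mathcal F}_k]$: since $x_k$ and $w_k$ are ${\mathcal F}_k$-measurable while $\varepsilon_k$ is independent of ${\mathcal F}_k$ with zero mean, the noise cross term vanishes, ${\mathbb E}[\varepsilon_k^T(x_k-x)|{\mathcal F}_k]={\mathbb E}[\varepsilon_k]^T(x_k-x)=0$, leaving ${\mathbb E}[\|x_{k+1}-x\|^2|{\mathcal F}_k]\le\|x_k-x\|^2-2\alpha_k{\mathcal L}_x(x_k,w_k)^T(x_k-x)+\alpha_k^2{\mathbb E}[\|{\mathcal L}_x(x_k,w_k)+\varepsilon_k\|^2|{\mathcal F}_k]$. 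Finally, convexity of ${\mathcal L}(\cdot,w_k)$ gives ${\mathcal L}(x,w_k)\ge{\mathcal L}(x_k,w_k)+{\mathcal L}_x(x_k,w_k)^T(x-x_k)$, i.e. ${\mathcal L}_x(x_k,w_k)^T(x_k-x)\ge{\mathcal L}(x_k,w_k)-{\mathcal L}(x,w_k)$; substituting this lower bound into the middle term yields the stated inequality. For item~2 the same computation starting from $w_{k+1}=\Gamma_{\mathcal W}(w_k+\alpha_k({\mathcal L}_w(x_k,w_k)+\xi_k))$ produces a $+2\alpha_k{\mathcal L}_w(x_k,w_k)^T(w_k-w)$ term, and concavity of ${\mathcal L}(x_k,\cdot)$ gives ${\mathcal L}_w(x_k,w_k)^T(w_k-w)\le{\mathcal L}(x_k,w_k)-{\mathcal L}(x_k,w)$, which closes the argument.

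There is no serious obstacle here; this is the standard one-step descent/ascent estimate for projected stochastic subgradient iterations. The only points needing a moment of care are bookkeeping: ensuring the filtration ${\mathcal F}_k$ is arranged so that $(x_k,w_k)$ is measurable but $(\varepsilon_k,\xi_k)$ is not, so the noise cross terms genuinely disappear in conditional expectation, and keeping the sign conventions consistent (descent in $x$, ascent in $w$) so that convexity is applied on the $x$-slice and concavity on the $w$-slice. Everything else is elementary algebra.
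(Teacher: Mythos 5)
Your proof is correct and is essentially the argument the paper relies on: the paper simply cites the deterministic iterate relation of N\'edi\'c--Ozdaglar (nonexpansiveness of the projection, expansion of the square, and the convexity/concavity gradient inequality) and then takes conditional expectations to eliminate the zero-mean noise cross term, which is exactly what you carried out explicitly.
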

\begin{proof}
The result can be obtained by the iterate relations
in~\cite[Lemma~3.1]{nedic2009subgradient} and taking the
expectations.
\end{proof}
\begin{lemma}[Berstein inequality for
Martingales~\cite{bercu2015concentration}]\label{lemma:Bernstein-inequality}
Let $({\mathcal M}_T )_{T = 0}^\infty$ be a square integrable
martingale such that ${\mathcal M}_0  = 0$. Assume that $\Delta {\mathcal
M}_T  \le b,\forall T \ge 1$ with probability one, where $b>0$ is
a real number and $\Delta {\mathcal M}_T $ is the Martingale
difference defined as $\Delta {\mathcal M}_T  := {\mathcal M}_T  - {\mathcal
M}_{T - 1} ,T \ge 1$. Then, for any $\varepsilon  \in [0,b]$ and
$a > 0$,
\begin{align*}
&{\mathbb P}\left[ \frac{1}{T}{\mathcal M}_T  \ge \varepsilon
,\frac{1}{T}\left\langle {\mathcal M} \right\rangle _T  \le a\right]
\le \exp \left(- \frac{T\varepsilon^2}{2(a+b\varepsilon
/3)}\right),
\end{align*}
where
\begin{align*}
\langle {\mathcal M} \rangle_T :=&\sum_{k=0}^{T-1} {\mathbb E}[\Delta {\mathcal M}_{k+1}^2 |{\mathcal F}_k].
\end{align*}
\end{lemma}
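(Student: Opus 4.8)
The plan is to use the exponential supermartingale (Chernoff) method, the standard route to Bernstein-type concentration. Fix a parameter $\lambda$ with $0\le\lambda<3/b$, to be optimized at the end, put $g(\lambda):=(e^{\lambda b}-1-\lambda b)/b^2$, and introduce the auxiliary process $Z_k:=\exp(\lambda{\mathcal M}_k-g(\lambda)\langle{\mathcal M}\rangle_k)$, which satisfies $Z_0=1$ and $Z_k\ge0$. The first step is to show that $(Z_k)_{k\ge0}$ is an $({\mathcal F}_k)$-supermartingale. For this I would use the elementary fact that $x\mapsto(e^{x}-1-x)/x^2$ is nondecreasing on ${\mathbb R}$, which implies $e^{\lambda x}\le1+\lambda x+g(\lambda)x^2$ for every $x\le b$. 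Taking $x=\Delta{\mathcal M}_{k+1}\le b$, applying ${\mathbb E}[\,\cdot\,|{\mathcal F}_k]$, and using the martingale identity ${\mathbb E}[\Delta{\mathcal M}_{k+1}|{\mathcal F}_k]=0$ together with $1+y\le e^y$ gives ${\mathbb E}[e^{\lambda\Delta{\mathcal M}_{k+1}}\,|\,{\mathcal F}_k]\le\exp\big(g(\lambda)\,{\mathbb E}[\Delta{\mathcal M}_{k+1}^2\,|\,{\mathcal F}_k]\big)$. Since ${\mathbb E}[\Delta{\mathcal M}_{k+1}^2\,|\,{\mathcal F}_k]=\langle{\mathcal M}\rangle_{k+1}-\langle{\mathcal M}\rangle_k$ is ${\mathcal F}_k$-measurable, this rearranges to ${\mathbb E}[Z_{k+1}\,|\,{\mathcal F}_k]\le Z_k$, so ${\mathbb E}[Z_T]\le{\mathbb E}[Z_0]=1$.

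The second step turns this into a tail bound through Markov's inequality. On the event $A:=\{{\mathcal M}_T\ge T\varepsilon,\ \langle{\mathcal M}\rangle_T\le Ta\}$, and using $\lambda\ge0$ together with $g(\lambda)\ge0$, one has the deterministic lower bound $Z_T\ge\exp(\lambda T\varepsilon-g(\lambda)Ta)$, hence $A\subseteq\{Z_T\ge e^{\lambda T\varepsilon-g(\lambda)Ta}\}$; Markov's inequality and ${\mathbb E}[Z_T]\le1$ then yield ${\mathbb P}[A]\le\exp\big(-T(\lambda\varepsilon-a\,g(\lambda))\big)$.

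The third step optimizes over $\lambda$. I would invoke the classical rational majorization $g(\lambda)\le\frac{\lambda^2/2}{1-\lambda b/3}$, valid for $0\le\lambda<3/b$, and select $\lambda=\frac{\varepsilon}{a+b\varepsilon/3}$, which is admissible since $a>0$ forces $\lambda<3/b$. With this choice $1-\lambda b/3=\frac{a}{a+b\varepsilon/3}$, so $a\,g(\lambda)\le\frac{a\lambda^2/2}{1-\lambda b/3}=\frac{\lambda^2(a+b\varepsilon/3)}{2}=\frac{\lambda\varepsilon}{2}$, whence $\lambda\varepsilon-a\,g(\lambda)\ge\frac{\lambda\varepsilon}{2}=\frac{\varepsilon^2}{2(a+b\varepsilon/3)}$. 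Substituting into the bound of the second step gives ${\mathbb P}[A]\le\exp\left(-\frac{T\varepsilon^2}{2(a+b\varepsilon/3)}\right)$, which is the claim.

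The difficulty is concentrated in the two elementary analytic inequalities underpinning the argument: (i) the monotonicity of $x\mapsto(e^{x}-1-x)/x^2$ on all of ${\mathbb R}$, which is exactly what makes the \emph{one-sided} bound $\Delta{\mathcal M}_{k+1}\le b$ (rather than $|\Delta{\mathcal M}_{k+1}|\le b$) suffice for the supermartingale step; and (ii) the majorization $g(\lambda)\le\frac{\lambda^2/2}{1-\lambda b/3}$, which produces the characteristic $b\varepsilon/3$ term and makes the optimal $\lambda$ explicit. Both are textbook ingredients in the proof of Bernstein's inequality, so a legitimate alternative is simply to cite~\cite{bercu2015concentration}; the self-contained derivation above is nonetheless short. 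Everything else---the supermartingale bookkeeping, Markov's inequality, and the algebra of the optimal $\lambda$---is routine.
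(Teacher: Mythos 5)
Your proof is correct. The paper itself gives no argument for this lemma---it is imported verbatim from the cited reference---and your derivation is precisely the standard exponential-supermartingale proof of the conditional Bernstein inequality found there: the one-sided Taylor-type bound $e^{\lambda x}\le 1+\lambda x+g(\lambda)x^2$ for $x\le b$ (via monotonicity of $x\mapsto(e^x-1-x)/x^2$), the supermartingale $Z_k=\exp(\lambda{\mathcal M}_k-g(\lambda)\langle{\mathcal M}\rangle_k)$, Markov's inequality on the joint event, and the majorization $g(\lambda)\le\frac{\lambda^2/2}{1-\lambda b/3}$ with $\lambda=\varepsilon/(a+b\varepsilon/3)$, which reproduces the stated exponent exactly. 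As a minor observation, your argument never uses the hypothesis $\varepsilon\le b$, so it establishes the bound for all $\varepsilon\ge 0$.
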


For any $x \in {\mathcal X}$ and $y \in {\mathcal Y}$, define
\begin{align*}
&{\mathcal E}_k^{(1)}(x):=\|x_k - x\|_2^2,\\
&{\mathcal E}_k^{(2)}(w):= \|w_k - w\|_2^2
\end{align*}
and
\begin{align*}
&H_k(x):=\frac{1}{2\alpha_k}({\mathcal E}_k^{(1)}(x) - {\mathbb
E}[{\mathcal E}_{k+1}^{(1)}(x)|{\mathcal F}_k]),\\
&R_k(w):= \frac{1}{2\alpha_k}({\mathcal E}_k^{(2)}(y)-{\mathbb
E}[{\mathcal E}_{k+1}^{(2)}(y)|{\mathcal F}_k]),
\end{align*}

We use ${\mathbb E}[\|{\mathcal L}_x(x_k,w_k)+\varepsilon_k\|_2^2 |{\mathcal F}_k] \le C^2$ and rearrange terms in~\cref{lemma:basic-iterate-relations1} to have
\begin{align}
&{\mathcal L}(x_k,y_k)-{\mathcal L}(x,w_k)\nonumber\\
\le& \underbrace {\frac{1}{2\alpha_k}({\mathcal E}_k^{(1)}(x) - {\mathbb E}[{\mathcal
E}_{k+1}^{(1)}(x)|{\mathcal F}_k])}_{=:H_k(x)} + \frac{\alpha_k}{2}
C^2,\nonumber\\
&\forall x \in {\mathbb R}^{|{\mathcal S}||{\mathcal A}|} \times {\mathbb R}^{|{\mathcal S}|},\label{eq:basic1}\\
&-\underbrace {\frac{1}{2\alpha_k}({\mathcal E}_k^{(2)}(w)-{\mathbb
E}[{\mathcal E}_{k+1}^{(2)}(w)|{\mathcal F}_k ])}_{=:R_k(w)} -
\frac{\alpha_k}{2} C^2 \nonumber\\
\le& {\mathcal L}(x_k,w_k)-{\mathcal L}(x_k,w),\quad
\forall w \in {\mathbb R}_+^{|{\mathcal S}||{\mathcal A}|} \times {\mathbb R}^{|{\mathcal S}||{\mathcal A}|}.\label{eq:basic2}
\end{align}

Adding these relations over $k=0,\ldots,T-1$, dividing by $T$, and
rearranging terms, we have
\begin{align}
&-\frac{1}{T}\sum_{k=0}^{T-1}{R_k(w)}-\frac{1}{T}\sum_{k=0}^{T-1}
{\frac{\alpha_k}{2}C^2}\nonumber\\
\le& \frac{1}{T}\sum_{k=0}^{T-1}{({\mathcal L}(x_k,w_k)-{\mathcal L}(x_k,w))},\quad
\forall w \in {\mathcal W}.\label{eq:eq1}
\end{align}
Similarly, we have from~\eqref{eq:basic1}
\begin{align}
&\frac{1}{T}\sum_{k=0}^{T-1}{({\mathcal L}(x_k,w_k)-{\mathcal L}(x,w_k))}\nonumber\\
\le& \frac{1}{T}\sum_{k=0}^{T-1}{H_k(x)}+\frac{1}{T}\sum_{k=0}^{T-1}
{\frac{\alpha_k}{2} C^2} ,\quad \forall x \in {\mathcal X}.\label{eq:eq2}
\end{align}
Multiplying both sides of~\eqref{eq:eq1} by $-1$ and adding it
with~\eqref{eq:eq2} yields
\begin{align*}
&\frac{1}{T}\sum_{k=0}^{T-1}{({\mathcal L}(x_k,w) - {\mathcal L}(x,w_k))} \\
\le& \frac{1}{T}\sum_{k=0}^{T-1} {R_k(w)} +\frac{1}{T}\sum_{k=0}^{T-1} {H_k (x)}  + \frac{C^2}{T}\sum_{k = 0}^{T - 1} {\alpha_k }.
\end{align*}

Using the convexity of ${\mathcal L}$ with respect to the first
argument and the concavity of ${\mathcal L}$ with respect to the second
argument, it follows from the last inequality that
\begin{align*}
&{\mathcal L}(\hat x_T,w)-{\mathcal L}(x,\hat w_T)\\
\le&\frac{1}{T}\sum_{k=0}^{T-1} {R_k(w)} + \frac{1}{T}\sum_{k=0}^{T-1} {H_k (x)} +\frac{C^2}{T}\sum_{k=0}^{T-1} {\alpha_k}.
\end{align*}

To proceed, we rearrange terms in the last inequality to
have
\begin{align}
&{\mathcal L}(\hat x_T,w)-{\mathcal L}(x,\hat w_T)\nonumber\\
\le& \frac{1}{T}\Phi_1(x) +\frac{1}{T}\Phi_2(y)+\frac{1}{T}{\mathcal
M}_T+ \frac{C^2}{T}\sum_{k=0}^{T-1}{\alpha_k},\nonumber\\
&\forall x\in {\mathcal X}, w\in {\mathcal W},\label{eq:duality-gap-bound2}
\end{align}
where
\begin{align*}
\Phi_1(x):=&\sum_{k=0}^{T-1}{\frac{1}{2\alpha_k}({\mathcal
E}_k^{(1)}(x)-{\mathcal E}_{k+1}^{(1)}(x))},\\
\Phi_2(w):=&\sum_{k=0}^{T-1}{\frac{1}{2\alpha_k}({\mathcal
E}_k^{(2)}(w)-{\mathcal E}_{k+1}^{(2)}(w))},\\
{\mathcal M}_T:=&\sum_{k=0}^{T-1}\frac{1}{2\alpha_k}({\mathcal
E}_{k+1}^{(1)}(x)+{\mathcal E}_{k+1}^{(2)}(w)\\
&- {\mathbb E}[{\mathcal E}_{k+1}^{(1)}(x)|{\mathcal F}_k]- {\mathbb E}[{\mathcal
E}_{k+1}^{(2)}(w)|{\mathcal F}_k]).
\end{align*}

As a next step, we derive bounds on the terms $\Phi_1(x)$ and $\Phi_2(w)$. First, $\Phi_1(x)$ is bounded by using the chains of inequalities
\begin{align}
\Phi_1(x)&= \sum_{k=0}^{T-1}{\frac{1}{2\gamma_k}({\mathcal E}_k^{(1)}(x)-{\mathcal E}_{k+1}^{(1)}(x))}\nonumber\\
&\leq \sum_{k=0}^{T-1}{\frac{1}{2\gamma_k}({\mathcal E}_k^{(1)}(x)-{\mathcal E}_{k+1}^{(1)}(x))}  + \frac{1}{\gamma_T}{\mathcal E}_T^{(1)} (x)\nonumber\\
&=\frac{1}{2}\left(\frac{1}{\gamma_0}{\mathcal
E}_0^{(1)}(x)+\sum_{k=0}^{T-1}{\left(
\frac{1}{\gamma_{k+1}}-\frac{1}{\gamma_k} \right){\mathcal
E}_{k+1}^{(1)}(x)} \right)\nonumber\\
&\le \frac{C^2}{2}  \left(\frac{1}{\gamma_0} + \sum_{k=0}^{T-1}{\left(
\frac{1}{\gamma_{k+1}}-\frac{1}{\gamma_k}\right)}
\right)\label{eq:explain-3}\\
&= \frac{C^2}{2}{\gamma_{T}},\nonumber
\end{align}
where~\eqref{eq:explain-3} is due to ${\mathcal E}_k^{(1)}(x)\leq C^2,\forall x\in {\mathcal
X}$. Similarly, we have $\Phi_2(w) \leq \frac{C^2}{2}{\alpha_{T}}$. Combining the last inequality with~\eqref{eq:duality-gap-bound2} yields
\begin{align}
&{\mathcal L}(\hat x_T,w) - {\mathcal L}(x,\hat w_T)\le \frac{1}{T}\frac{C^2}{\gamma_{T}}
+\frac{C^2}{T}\sum_{k=0}^{T-1}{\gamma_k} +\frac{1}{T}{\mathcal M}_T,\nonumber\\
&\forall x\in {\mathcal X}, w\in {\mathcal W},\label{eq:duality-gap-bound3}
\end{align}

Plugging $\alpha_k=\alpha_0/\sqrt{k+1}$ into the first term, we have
\begin{align*}
&\frac{C^2}{T \alpha_T} = \frac{C^2 \sqrt{T+1}}{T\alpha_0} \le \frac{C^2 \sqrt T+1}{T \alpha_0} = \frac{{C^2 \sqrt T }}{T\alpha_0} + \frac{C^2}{T\alpha_0}\\
&\le \frac{C^2}{\sqrt T \alpha_0} + \frac{C^2}{\sqrt T \alpha_0} = \frac{2C^2}{\sqrt T \alpha_0}.
\end{align*}

Moreover, plugging $\alpha_k=\alpha_0/\sqrt{k+1}$ into the second term leads to
\begin{align*}
&\frac{1}{T}\sum_{k=0}^{T-1}{\alpha_k}=\frac{\alpha_0}{T}\sum_{k=1}^T
{\frac{1}{\sqrt k}} \le \frac{\alpha_0}{T}\int_0^T {\frac{1}{\sqrt
t}dt}= \frac{\alpha_0 \sqrt T}{T} = \frac{\alpha_0}{\sqrt T}.
\end{align*}

Therefore, combining the bounds yields
\begin{align}
&{\mathcal L}(\hat x_T,w)-{\mathcal L}(x,\hat w_T)\le \frac{2C^2 \alpha_0^{-1}+C^2\alpha_0}{\sqrt T} + \frac{1}{T}{\mathcal M}_T.\label{eq:7}
\end{align}

To prove ${\mathcal L}(\hat x_T,w)-{\mathcal L}(x,\hat w_T)\le \varepsilon$, it suffices to prove $\frac{2C^2 \alpha_0^{-1}+C^2\alpha_0}{\sqrt T} \le \varepsilon/2$ and $\frac{1}{T}{\mathcal M}_T \le \varepsilon/2$. By simple algebraic manipulations, we can prove that the first inequality holds if
\begin{align}
T\ge&\frac{4C^4(2\alpha_0^{-1}+\alpha_0)^2}{\varepsilon^2}.\label{eq:8}
\end{align}

To prove the second inequality with high probability, we will use the Bernstein inequality in~\cref{lemma:Bernstein-inequality}. To do so, one easily proves that ${\mathbb E}[{\mathcal M}_{T+1} |{\mathcal
F}_T ] = {\mathcal M}_T$, and hence, $({\mathcal M}_T )_{T = 0}^\infty$ is a Martingale. Moreover, we will find constants $a>0$ and $b>0$ such that $\Delta {\mathcal M}_{T
+1}:={\mathcal M}_{T+1}-{\mathcal M}_T \le b$ and $\frac{1}{T}\langle {\mathcal M}
\rangle_T\le a $. Noting that ${\mathcal M}_{T+1}-{\mathcal M}_T  = \frac{1}{2\alpha_T }({\mathcal E}_{T+1}^{(1)}-{\mathbb E}[{\mathcal E}_{T+1}^{(1)}|{\mathcal F}_T])+\frac{1}{2\alpha_T}({\mathcal E}_{T+1}^{(2)}-{\mathbb E}[{\mathcal E}_{T+1}^{(2)} |{\mathcal F}_T])$, we obtain the bounds for the first two terms
\begin{align}
&\frac{1}{2\alpha_k}({\mathcal E}_{k+1}^{(1)})- {\mathbb E}[{\mathcal
E}_{k+1}^{(1)} |{\mathcal F}_k])\\
=& \frac{1}{2\alpha_k} \|\Gamma_{\mathcal X} (x_k-\alpha_k
{\mathcal L}_x (x_k,w_k)-\alpha_k \varepsilon_k)-x^*\|^2\nonumber\\
&-\frac{1}{2\alpha_k} {\mathbb E}[\|\Gamma_{\mathcal X}(x_k-
\alpha_k {\mathcal L}_x (x_k,w_k)-\alpha_k \varepsilon_k)-x^*\|^2 |{\mathcal F}_k ]\nonumber\\
=& \frac{1}{2\alpha_k}(\|\Gamma_{\mathcal X}(x_k-\alpha_k
{\mathcal L}_x (x_k,w_k)-\alpha_k \varepsilon_k)-x_k\|_2^2+\|x_k-x^*\|_2^2\nonumber\\
&-2(x_k-x^*)^T (\Gamma_{\mathcal X}(x_k-\alpha_k {\mathcal L}_x
(x_k,w_k)-\alpha_k \varepsilon_k)-x_k)\nonumber\\
&- {\mathbb E}[\|\Gamma_{\mathcal X} (x_k-\alpha_k {\mathcal L}_x (x_k,w_k)-\alpha_k \varepsilon_k)-x_k \|_2^2 |{\mathcal F}_k]\nonumber\\
& -\|x_k-x^*\|_2^2 \nonumber\\
&-{\mathbb E}[(\Gamma_{\mathcal X}(x_k-\alpha_k{\mathcal L}_x (x_k
,w_k)-\alpha_k \varepsilon_k)-x_k)^T \nonumber\\
&\times (x_k-x^*)|{\mathcal F}_k ])\label{eq:explain-1}\\
\le& \frac{1}{2\alpha_k}\|\Gamma_{\mathcal X} (x_k-\alpha_k {\mathcal
L}_x (x_k,w_k)-\alpha_k\varepsilon_k)-x_k\|_2^2\nonumber\\
&+ \frac{1}{\alpha_k}\|x_k-x^*\|_2 \nonumber\\
&\times \| \Gamma_{\mathcal X}(x_k-\alpha_k {\mathcal L}_x (x_k,w_k)-\alpha_k\varepsilon_k)-x_k\|_2\nonumber\\
&+ \frac{1}{\alpha_k} {\mathbb E}[\| x_k-x^* \|_2 \nonumber\\
& \times \|\Gamma_{\mathcal
X}(x_k-\alpha_k {\mathcal L}_x (x_k ,w_k)-\alpha_k \varepsilon_k)-x_k \|_2 |{\mathcal F}_k ]\label{eq:explain-2}\\
\le& \frac{\alpha_k}{2} \| {\mathcal L}_x (x_k,w_k)+\varepsilon_k\|_2^2 + \|x_k-x^*\|_2 \| {\mathcal L}_x (x_k,w_k)+\varepsilon_k \|_2\nonumber\\
&+\|x_k-x^*\|_2 {\mathbb E}[\| {\mathcal L}_x (x_k,w_k)+\varepsilon_k \|_2 |{\mathcal F}_k ]\label{eq:explain-3}\\
\le& \alpha_k C^2/2+2C^2,\nonumber
\end{align}
where~\eqref{eq:explain-1} follows from the relation $\| a-b
\|_2^2 = \|a\|_2^2 + \| b \|_2^2 - 2a^T b$ for any vectors
$a,b$,~\eqref{eq:explain-2} follows from the Cauchy-Schwarz
inequality,~\eqref{eq:explain-3} follows from the nonexpansive map
property of the projection $\|\Gamma_{\mathcal X}(a)-\Gamma_{\mathcal X}(b)\|_2\le \|a-b\|_2$,
and the last inequality is obtained after simplifications.
Similarly, one gets $\frac{1}{2\alpha_k}({\mathcal E}_{k+1}^{(2)}-{\mathbb E}[{\mathcal E}_{k+1}^{(2)} |{\mathcal F}_k]) \le \alpha_k C^2/2+2C^2$. Combining the last two inequalities, we have $\Delta {\mathcal M}_{T
+1}:={\mathcal M}_{T+1}-{\mathcal M}_T  = \frac{1}{2\alpha_T
}({\mathcal E}_{T + 1}  - {\mathbb E}[{\mathcal E}_{T + 1} |{\mathcal F}_T ])
\le b$ with probability one, where $b =\alpha_0 C^2 + 4C^2$.

Next, we will prove that $\frac{1}{T}\langle {\mathcal M}
\rangle_T\le a $ holds, where $a =(\alpha _0  + 2)^2 C^4$. Using ${\mathbb E}[{\mathcal E}_k^{(1)}+ {\mathcal E}_k^{(2)} |{\mathcal F}_k ] = {\mathcal E}_k^{(1)} + {\mathcal E}_k^{(2)}$, we have
\begin{align}
&{\mathbb E}[|{\mathcal M}_{k + 1}  - {\mathcal M}_k |^2 |{\mathcal F}_k]\nonumber\\
=&\frac{1}{4\alpha_k^2} {\mathbb E}[|{\mathcal E}_{k+1}  - {\mathbb E}[{\mathcal
E}_{k+1}|{\mathcal F}_k ]|^2 |{\mathcal F}_k ]\nonumber\\
=& \frac{1}{4\alpha_k^2}{\mathbb E}[|{\mathcal E}_{k+1}-{\mathcal E}_k - {\mathbb E}[{\mathcal E}_{k+1}-{\mathcal E}_k |{\mathcal F}_k ]|^2 |{\mathcal F}_k]\nonumber\\
\le& \frac{1}{4\alpha_k^2}{\mathbb E}[{\mathbb E}[|{\mathcal E}_{k+1}-{\mathcal E}_k|^2|{\mathcal F}_k ]|{\mathcal F}_k ]\nonumber\\
=& \frac{1}{4\gamma_k^2}{\mathbb E}[|\underbrace {{\mathcal E}_{k +
1}^{(1)} + {\mathcal E}_{k+1}^{(2)}- {\mathcal E}_k^{(1)} - {\mathcal
E}_k^{(2)}}_{=:\Phi_1 }|^2 |{\mathcal F}_k ],\label{eq:eq7}
\end{align}
where the inequality follows from the fact that the variance of a
random variable is bounded by its second moment. For
bounding~\eqref{eq:eq7}, note that $\Phi_1$ is written as
\begin{align*}
\Phi_1=& \| x_{k+1}- x^*\|_2^2  - \| x_k-x^*\|_2^2\\
&+\|w_{k+1}- w^* \|_2^2-\|w_k-w^*\|_2^2.
\end{align*}
Here, the first two terms have the bound
\begin{align}
&\| x_{k+1}-x^* \|^2 -\|x_k-x^*\|^2\nonumber\\
=& \| \Gamma_{\mathcal X} (x_k  - \alpha_k L_x (x_k,w_k ) - \alpha_k
\varepsilon_k ) -x_k \|^2\nonumber\\
&-2(\Gamma_{\mathcal X} (x_k - \alpha_k L_x
(x_k ,w_k)-\alpha_k \varepsilon_k)- x_k )^T (x_k-x^*)\label{eq:explain-4}\\
\le& \| \Gamma_{\mathcal X} (x_k-\alpha_k L_x(x_k ,w_k)- \alpha_k
\varepsilon_k ) - x_k \|_2^2\nonumber\\
&+ 2 \| \Gamma_{\mathcal X} (x_k-\alpha
_k L_x(x_k,w_k) - \alpha_k \varepsilon_k) - x_k \|_2 \| x_k-x^*\|_2\label{eq:explain-5}\\
\le& \alpha_k^2 \| L_x (x_k,w_k)+\varepsilon _k \|_2^2 \nonumber\\
&+ 2\alpha_k
\| L_x(x_k,w_k)+ \varepsilon_k \|_2 \|x_k  - x^*\|_2\label{eq:explain-6}\\
\le& \alpha_k (\alpha_0 C^2+2C^2),\label{eq:explain-7}
\end{align}
where~\eqref{eq:explain-4} follows from the relation $\|a-b\|_2^2 = \|a\|_2^2+\|b\|_2^2
-2a^T b$ for any vectors $a,b$,~\eqref{eq:explain-5} follows from
the Cauchy-Schwarz inequality,~\eqref{eq:explain-6} is due to the
nonexpansive map property of the projection $\|\Gamma_{\mathcal X}
(a) - \Gamma_{\mathcal X} (b) \|_2 \le \|a-b\|_2$,~\eqref{eq:explain-7} comes from~\eqref{eq:1},~\eqref{eq:2}, and~\eqref{eq:3}.
Similarly, the second two terms in $\Phi _1$ are bounded by
$\alpha_k (\alpha_0 C^2+2C^2)$. Combining the
last two results leads to $\Phi_1  \le 2\alpha _k (\alpha_0 C^2+2C^2)$, and plugging the
bound on $\Phi_1$ into~\eqref{eq:eq7} and after simplifications,
we obtain ${\mathbb E}[|{\mathcal M}_{k+1}- {\mathcal M}_k |^2 |{\mathcal F}_k
] \le (\alpha_0+2)^2 C^4$, which is the desired conclusion.

We are now ready to apply the Bernstein inequality in~\cref{lemma:Bernstein-inequality} to prove $\frac{1}{T}{\mathcal M}_T \le \varepsilon/2$ with high probability. Fix any $x\in {\mathcal X},w\in
{\mathcal W}$ and apply the Bernstein inequality with $a$ and $b$ given above to prove
\begin{align*}
&{\mathbb P}\left[\frac{1}{T}{\mathcal M}_T\ge \frac{\varepsilon}{2}
,\frac{1}{T} \langle {\mathcal M} \rangle_T \le a\right]\\
=& {\mathbb P}\left[ \frac{1}{T}{\mathcal M}_T\ge \frac{\varepsilon}{2} \right]\le \exp \left( -\frac{T \varepsilon^2}{8(a + b \varepsilon/6)} \right)
\end{align*}
with any $\varepsilon>0$. Note that for any $\delta \in (0,1) $,
$\exp \left(-\frac{T\varepsilon^2}{8(a + b\varepsilon /6)}\right)\le \delta$ holds if and only if
$T \ge \frac{8(a + b\varepsilon/6)}{\varepsilon^2}\ln(\delta^{-1})$. By plugging $a$ and $b$ given before into the last inequality, it holds that if
\begin{align*}
&T\ge \frac{8C^2 ((\alpha_0+2)^2 C^2+(\alpha_0+4)\varepsilon/6)}{\varepsilon^2}\ln \left(\frac{1}{\delta}\right),
\end{align*}
then with probability at least $1-\delta$, we have ${\mathcal M}_T /T \le\varepsilon/2$. Combined with~\eqref{eq:8}, one concludes that under the conditions in the statement of~\cref{prop:convergence1}, with probability at least $1-\delta$, ${\mathcal L}(\hat x_T,w)-{\mathcal L}(x,\hat w_T)\le \varepsilon$ holds. By~\cref{def:e-saddle-point}, it implies
\begin{align*}
&{\mathbb P}[(\hat x_T,\hat w_T ) \in {\mathcal H}_\varepsilon]\ge 1-\delta.
\end{align*}
This completes the proof.

\end{document}